\title[The $\debar$-equation, duality, and holomorphic forms]{The $\debar$-equation, duality,
and holomorphic forms on a reduced complex space}
\author{H{\aa}kan Samuelsson Kalm}
\thanks{The author was partially supported by the Swedish Research Council.}
\subjclass[2010]{32A26, 32A27, 32C15, 32C30, 32C37}
\address{H{\aa}kan Samuelsson Kalm, Department of Mathematical Sciences, Division of Mathematics, University of Gothenburg and 
Chalmers University of Technology, SE-412 96 G\"{o}teborg, Sweden}
\email{hasam@chalmers.se}
\date{\today}
\newtheorem{proposition}{Proposition}[section]
\newtheorem{theorem}[proposition]{Theorem}
\newtheorem{lemma}[proposition]{Lemma}
\newtheorem{corollary}[proposition]{Corollary}
\theoremstyle{definition}
\newtheorem{example}[proposition]{Example}
\newtheorem{remark}[proposition]{Remark}
\numberwithin{equation}{section}
\DeclareMathOperator{\Hom}{\mathscr{H}\text{\kern -3pt {\calligra\Large om}}\,}
\DeclareMathOperator{\Ext}{\mathscr{E}\text{\kern -3pt {\calligra\Large xt}}\,\,}
\DeclareMathOperator{\Image}{\mathscr{I}\text{\kern -3pt {\calligra\Large m}}\,}
\DeclareMathOperator{\Kernel}{\mathscr{K}\text{\kern -3pt {\calligra\Large er}}\,}
\newcommand{\C}{\mathbb{C}}
\newcommand{\debar}{\bar{\partial}}
\newcommand{\A}{\mathscr{A}}
\newcommand{\Bsheaf}{\mathscr{B}}
\newcommand{\F}{\mathscr{F}}
\newcommand{\G}{\mathscr{G}}
\newcommand{\HH}{\mathscr{H}}
\newcommand{\R}{\mathbb{R}}
\newcommand{\J}{\mathcal{J}}
\newcommand{\E}{\mathscr{E}}
\newcommand{\W}{\mathcal{W}}
\newcommand{\PM}{\mathcal{PM}}
\newcommand{\hol}{\mathscr{O}}
\newcommand{\K}{\mathscr{K}}
\newcommand{\Proj}{\mathscr{P}}
\newcommand{\CH}{\mathscr{C} \kern -2pt \mathscr{H}}
\newcommand{\B}{\mathbb{B}}
\newcommand{\Om}{\mathit{\widehat{\Omega}}}
\newcommand{\ett}{\mathbf{1}}
\def\newop#1{\expandafter\def\csname #1\endcsname{\mathop{\rm #1}\nolimits}}
\begin{document}
\nocite{*}
\bibliographystyle{plain}

\begin{abstract}
We solve the $\debar$-equation for $(p,q)$-forms locally on any 
reduced pure-dimensional complex space and we prove an explicit version of Serre duality
by introducing suitable concrete fine sheaves of certain $(p,q)$-currents. In particular
this gives a precise condition for the $\debar$-equation to be globally solvable. 
Our results extend results for $(0,q)$-forms and give information about
holomorphic $p$-forms on singular spaces. 
%

\end{abstract}

\maketitle
\thispagestyle{empty}

\section{Introduction}
Let $X$ be a reduced complex space of pure dimension $n$. A smooth form on $X$ is locally the pullback to 
$X_{reg}$ of a smooth form in some ambient complex manifold; it is well-known that this is an intrinsic notion
and we denote the corresponding sheaf by $\E_X$.
It is proved in \cite{AS} that if $\varphi$ is a smooth $\debar$-closed $(0,q)$-form, $q>0$, on $X$ and $X$ is Stein,
then there is a smooth $(0,q-1)$-form $\psi$ on $X_{reg}$ such that $\debar\psi=\varphi$; if $q=0$ then $\varphi$ is 
strongly holomorphic. In general $\psi$ cannot be smooth on $X$, see, e.g., \cite[Example~1.1]{AS}, but the local solution operators
constructed in \cite{ASJFA} and \cite{AS} provide solutions $\psi$ with certain mild singularities at $X_{sing}$. In particular $\psi$ 
is a current on $X$ and $\debar\psi=\varphi$ in the current sense also across $X_{sing}$.

In case $X$ is smooth, local existence results for the $\debar$-equation for $(0,q)$-forms easily carry over to
$(p,q)$-forms since the holomorphic $p$-forms in this case are sections of a vector bundle, i.e., a locally free sheaf,
denoted $\mathit{\Omega}^p_X$. 
In the presence of singularities the situation is quite different. There are several natural notions of 
holomorphic $p$-forms and usually the corresponding sheaves are not locally free. We will be particularly interested in
two notions, $\Om_X^p$ and $\omega_X^p$. The sheaf $\omega_X^p$ was introduced by Barlet \cite{Barlet} and 
$\Om_X^p$ can be defined in the same way as $\E_X$ above replacing ``smooth form'' by ``holomorphic $p$-form''.
It is well-known that $\Om_X^p$
is the K\"{a}hler-Grothendieck $p$-forms modulo torsion, see, e.g., \cite{Ferrari} or Section~\ref{strong} below. It is clear that
$\mathit{\widehat{\Omega}}_X^0=\hol_X$ and well-known that $\omega_X^n$ is the Grothendieck dualizing sheaf.

Our main result is that locally on $X$ the $\debar$-equation for $(p,q)$-forms is always solvable,
if interpreted in the sense of currents even across $X_{sing}$.
For other results about the $\debar$-equation in the singular setting see, e.g.,
\cite{BS}, \cite{FOV}, \cite{LR}, \cite{Ohsawa}, \cite{OV}, \cite{PS}, \cite{JeanDuke}, \cite{Saper}.
Recall that the $(p,q)$-currents on $X$ is the dual of the compactly supported sections of $\E_X^{n-p,n-q}$; given an embedding 
$X\hookrightarrow M$ they can also be identified with certain currents in ambient space, see Section~\ref{prelim}.

\begin{theorem}\label{main2}
Let $X$ be a pure $n$-dimensional analytic subset of a pseudoconvex domain $D\subset \C^N$,
let $D'\Subset D$ and set $X':=X\cap D'$. There are integral operators $\K\colon\mathscr{E}^{p,q}(X)\to\mathscr{E}^{p,q-1}(X'_{reg})$
and $\Proj\colon\mathscr{E}^{p,0}(X)\to\Om^p(X')$ such that $\K\varphi$ has a current extension to $X'$ and, 
as currents on $X'$, 
\begin{eqnarray*}
\varphi &=& \K(\debar\varphi) + \Proj\varphi, \quad \varphi\in\mathscr{E}^{p,0}(X), \\
\varphi &=& \debar \K\varphi + \K(\debar\varphi), \quad \varphi\in\mathscr{E}^{p,q}(X),\,\, q\geq 1.
\end{eqnarray*}
\end{theorem}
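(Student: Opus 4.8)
The plan is to follow the template of the $p=0$ case in \cite{AS}, adapting the ambient weighted integral machinery so that it carries the extra holomorphic $p$-form factor. The construction is local-to-semiglobal: I work in the given embedding $i\colon X\hookrightarrow D\subset\C^N$, build explicit kernels on the ambient domain, and restrict them to $X$ by means of a residue current. Since $i_*\Om_X^p$ is a coherent $\hol_D$-module --- it is the quotient of the free module $\Om_D^p$ by the subsheaf of those forms whose pullback to $X_{reg}$ vanishes --- I may, after shrinking $D$, choose a finite free resolution $0\to\hol(E_{N_0})\xrightarrow{f_{N_0}}\cdots\xrightarrow{f_1}\hol(E_0)\xrightarrow{f_0}i_*\Om_X^p\to 0$. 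To it I associate the Andersson--Wulcan residue current $R=\sum_k R_k$, an endomorphism-valued $(0,\bullet)$-current supported on $X$, whose defining property gives the duality I need: a holomorphic $E_0$-valued section $\phi$ over $D$ represents a section of $\Om_X^p$ exactly when $R\phi=0$.

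Next I assemble the integral kernels. On the pseudoconvex pair $(D,D')$ I take a weighted Cauchy--Fantappi\`e--Bochner--Martinelli kernel, i.e.\ a form $g(\zeta,z)$ holomorphic in $z$ with $\nabla g=0$ and $g_{0,0}(z,z)=1$, where $\nabla=\delta_{\zeta-z}-\debar$ is contraction by $\zeta-z$ minus $\debar$. I then choose Hefer forms $H=H^\ell_k(\zeta,z)$, holomorphic and of the appropriate bidegrees, that split the maps $f_k$ in the sense that $\delta_{\zeta-z}H=f(\zeta)H-Hf(z)$. Forming the product $g\wedge H\wedge R$ on $D\times D$ and selecting the correct components yields two kernels: one containing the Bochner--Martinelli factor defines $\K\varphi(z)=\int_\zeta k(\zeta,z)\wedge\varphi(\zeta)$, and the purely weighted one defines $\Proj\varphi(z)=\int_\zeta p(\zeta,z)\wedge\varphi(\zeta)$. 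The ambient $dz_I$'s are carried by the $E_0$-factor, so $\Proj\varphi$ is a combination of pullbacks of holomorphic ambient $p$-forms and is thus genuinely strongly holomorphic. Since $\varphi$ has the SEP and $R$ is pseudomeromorphic, both integrals make sense after pulling everything back to a smooth modification $\widetilde X\to X$.

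The Koppelman identities then come from a single comparison formula. Applying $\nabla$ to $g\wedge H\wedge R$ and using $\nabla g=0$, the Hefer relations, and the residue relations $f_k R_k=\debar R_{k-1}$ of the Andersson--Wulcan current, the terms telescope and one arrives at $\nabla(g\wedge H\wedge R)=[\Delta]-p$ modulo terms supported on $X_{sing}$, where $[\Delta]$ is integration along the diagonal and $p$ is the weighted kernel above. Integrating this over $X$ in the $\zeta$-variable, the diagonal term reproduces $\varphi$, the $\debar_z$-exact piece gives $\debar\K\varphi$, transferring $\debar_\zeta$ onto $\varphi$ by integration by parts (legitimate because $\varphi$ has the SEP) gives $\K(\debar\varphi)$, and the remaining piece gives $\Proj\varphi$; the duality $R\phi=0$ from the first step is what guarantees $\Proj\varphi\in\Om^p(X')$. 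For $q\geq 1$ the projection term vanishes for bidegree reasons, giving the second formula, while for $q=0$ it survives and gives the first.

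I expect the main difficulty to be the regularity analysis needed to upgrade these identities from $X'_{reg}$ to currents on all of $X'$, and to verify that $\K\varphi$ and $\Proj\varphi$ actually lie in the sheaves of Theorem~\ref{main1}. Concretely, the comparison formula produces residue contributions a priori supported on $X_{sing}$, and one must show they cancel; this rests on the vanishing encoded in the Andersson--Wulcan relations together with the pseudomeromorphicity and SEP of all currents in play, which are established by resolving the singularities of the kernels by Hironaka and analyzing the resulting principal-value and residue integrals. The point that is genuinely new relative to \cite{AS} is ensuring that the $p$-form factor interacts correctly with the Hefer forms so that $\Proj$ lands in the strongly holomorphic sheaf $\Om_X^p$ and not merely in the larger sheaf of holomorphic forms on $X_{reg}$; this is exactly where the free resolution of $\Om_X^p$, rather than that of $\hol_X$, is used.
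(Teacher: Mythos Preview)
Your strategy matches the paper's: resolve $i_*\Om_X^p$ in the ambient domain, form the Andersson--Wulcan current $R$, couple it via Hefer forms to a holomorphic-in-$z$ weight $g$, and read off the Koppelman identity. Two points deserve correction.

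First, the reason $\Proj\varphi$ lands in $\Om^p(X')$ is not the annihilation property ``$R\phi=0$''. The paper's mechanism is simpler: since $g$ is chosen holomorphic in $z$, each coefficient $\tilde p_k(\zeta,z)$ in the kernel $p(\zeta,z)=\sum_k\tilde p_k(\zeta,z)\wedge\omega_k(\zeta)$ is holomorphic in $z$ as an $E_0^z$-valued form, hence $\Proj\varphi$ has a holomorphic extension to $D'$ and its restriction to $X'$ is strongly holomorphic by definition. The residue duality is used elsewhere (e.g.\ in the residue criterion for strong holomorphicity), not here.

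Second, and more seriously, the justification you give for the integration-by-parts step---``legitimate because $\varphi$ has the SEP''---is insufficient. The paper first establishes the Koppelman formula on $X'_{reg}$ (Proposition~\ref{kprop1}), and the hypothesis needed there is that $\debar(\omega\wedge\varphi)$ has the SEP, where $\omega$ is the $n{-}p$-structure form satisfying $R\wedge dz=i_*\omega$. This is strictly stronger than $\varphi$ having the SEP and is exactly the content of Proposition~\ref{Aprop1}\,(iv), whose proof is the technical Lemma~\ref{ASlemma}: one shows that for iterated products $T=\omega(z^\nu)\wedge k_\nu\wedge\cdots\wedge k_1$ one has $\debar\chi(|h|^2/\epsilon)\wedge T\to 0$, by induction on $\nu$ combined with the dimension principle and the fine structure $\omega_{k}=\alpha_k\omega_{k-1}$ from Proposition~\ref{fundprop}. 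Once the formula holds on $X'_{reg}$, one extends to $X'$ by checking that each of $\varphi$, $\debar\K\varphi$, $\K(\debar\varphi)$, $\Proj\varphi$ separately has the SEP; for $\debar\K\varphi$ this again requires Proposition~\ref{Aprop1}\,(iv), not merely that $\K\varphi\in\W$. You flag this extension as ``the main difficulty'', which is right, but the precise lever is the SEP of $\debar(\omega\wedge\varphi)$, and your sketch does not isolate it.
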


The construction of $\Proj$ shows that $\Proj\varphi$ in fact has a holomorphic extension to
$D'$. 
The integral operators $\K$ and $\Proj$ are given by kernels $k(\zeta,z)$ and $p(\zeta,z)$
which are currents on $X\times X'$ that
are respectively integrable and smooth on $X_{reg}\times X'_{reg}$  
and that have principal value-type singularities at the singular locus of $X\times X'$. 
Since a current locally has finite order we get the following result.

\begin{corollary}\label{malgrangecor}
Let $\varphi$ be a smooth $\debar$-closed $(p,q)$-form on $X_{reg}$ such that 
there is a $C^{\ell}$-smooth form in $D$ whose pullback to $X_{reg}$ equals $\varphi$.
There is an $M_{D'}\geq 0$, independent of $\varphi$, such that the following holds.
\begin{itemize}
\item[(i)] If $q=0$ and $\ell\geq M_{D'}$ then there is a $\tilde{\varphi}\in\Om^p(X')$ 
such that $\varphi_{\restriction_{X'_{reg}}}=\tilde{\varphi}_{\restriction_{X'_{reg}}}$.

\item[(ii)] If $q\geq 1$ and $\ell\geq M_{D'}$ then there is a smooth $(p,q-1)$-form $u$ on $X'_{reg}$
such that $\debar u=\varphi$ on $X'_{reg}$.
\end{itemize}
\end{corollary}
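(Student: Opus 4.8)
The plan is to deduce the corollary directly from the Koppelman formulas of Theorem~\ref{main2}, the point being to enlarge the class of forms to which the operators $\K$ and $\Proj$ apply. For each fixed $z\in X'_{reg}$ the kernel $k(\zeta,z)$ is, by the discussion following Theorem~\ref{main2}, a current in $\zeta$ with principal value-type singularities along $X_{sing}$, and hence of finite order; the same holds for $p(\zeta,z)$. First I would show that this order is bounded uniformly in $z$: since $k$ and $p$ are given explicitly and their singularities along $X_{sing}\times X'$ are of principal value type, a partition-of-unity and compactness argument over the compact set $X\cap\overline{D'}$ produces a single integer $M_{D'}$, depending only on $D'$ (and $X$), that bounds the order of $k(\cdot,z)$ and $p(\cdot,z)$ for all $z\in X'$. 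This $M_{D'}$ is the constant in the statement; I would enlarge it by one to leave room for the single differentiation appearing below.

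Next I would extend $\K$ and $\Proj$ from $\A^{p,q}(X)$ to forms that are merely $C^\ell$-smooth pullbacks. Given $\varphi=i^*\Phi$ with $\Phi\in C^\ell(D)$ and $\ell\geq M_{D'}$, the regularized integrals $\int_{X_\zeta}\chi_\epsilon\,k(\zeta,z)\wedge\varphi(\zeta)$ and $\int_{X_\zeta}\chi_\epsilon\,p(\zeta,z)\wedge\varphi(\zeta)$ converge as $\epsilon\to0$, since a current of order $\leq M_{D'}$ acts continuously on $C^{M_{D'}}$-forms; this defines $\K\varphi$ and $\Proj\varphi$. To see that the homotopy identities persist, I would approximate $\Phi$ by smooth forms $\Psi_j\to\Phi$ in $C^\ell_{loc}(D)$. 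For each $j$ the pullback $i^*\Psi_j$ is smooth on $X$, hence lies in $\E_X^{p,q}\subset\A_X^{p,q}$ by Theorem~\ref{main1}(i), so Theorem~\ref{main2} applies to it; letting $j\to\infty$ and using the uniform order bound, so that $\K$, $\Proj$ and one application of $\debar$ are continuous for $C^\ell_{loc}$-convergence, transfers both Koppelman formulas to $\varphi$.

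At this point the hypothesis enters. Because $\varphi$ is $\debar$-closed on $X_{reg}$, the pullback $i^*\debar\Phi$ vanishes on the dense subset $X_{reg}$ and therefore vanishes identically as a continuous form on $X$; consequently $\K(\debar\varphi)=0$. In case (i), $q=0$, the first Koppelman formula then reads $\varphi=\Proj\varphi$ on $X'$, and since the construction of $\Proj$ shows that $\Proj\varphi$ has a holomorphic extension to $D'$ (a property that survives the limit $j\to\infty$), the form $\tilde\varphi:=\Proj\varphi$ lies in $\Om^p(X')$ and agrees with $\varphi$ on $X'_{reg}$. In case (ii), $q\geq1$, the second formula becomes $\varphi=\debar\K\varphi$ on $X'$; setting $u:=\K\varphi$ and noting that $k(\zeta,z)$ is smooth in $z\in X'_{reg}$, so that $u$ is smooth there, we obtain $\debar u=\varphi$ on $X'_{reg}$, as required.

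The step I expect to be the main obstacle is the uniform finite-order bound $M_{D'}$ together with the attendant continuity of $\K$ and $\Proj$ under $C^\ell_{loc}$-approximation: this requires controlling the singularities of the explicit kernels along $X_{sing}\times X'$ uniformly in $z$, and it is precisely here that the detailed structure of the kernels from the proof of Theorem~\ref{main2} is needed. Once that is in hand, the remaining steps are formal consequences of the Koppelman formulas and the density of smooth forms in $C^\ell_{loc}$.
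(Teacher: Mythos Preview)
Your approach is correct and is precisely what the paper does. The paper's entire argument is the single sentence preceding the corollary: ``Since a current locally has finite order we get the following result,'' together with the explicit limit formulas for $\K\varphi$ and $\Proj\varphi$ displayed just before it. Your proposal is a faithful expansion of this remark: the uniform finite-order bound for the kernels on the compact set $\textrm{supp}_\zeta g\times\overline{D'}$, the approximation of the $C^\ell$-extension $\Phi$ by smooth forms so that Theorem~\ref{main2} applies termwise, and the passage to the limit using continuity in $C^\ell_{\textrm{loc}}$ are exactly the details one has to supply. The paper also records, immediately after the corollary, that $\Proj\varphi$ furnishes the holomorphic extension and $\K\varphi$ the smooth solution on $X'_{reg}$, matching your conclusions in (i) and (ii).
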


Part (i) for $p=0$ and $M_{D'}=\infty$ is a classical result by Malgrange \cite[Th\'{e}or\`{e}me~4]{Malgrange}
answering a question by Grauert; for $M_{D'}<\infty$ it is due to Spallek \cite{Spallek65}. 
Part (ii) for $p=0$ and $X$ a reduced complete intersection was first proved by
Henkin and Polyakov \cite{HePo}.
For $p=0$, Corollary~\ref{malgrangecor} is also proved in \cite{ASJFA}. 
We remark that Corollary~\ref{malgrangecor} is explicit in the 
sense that $\Proj \varphi$ (resp.\ $\K\varphi$) provides an explicit holomorphic extension of $\varphi$ to $D'$
(resp.\ explicit solution to $\debar u=\varphi$ on $X'_{reg}$).


As already mentioned, $\debar\psi=\varphi$ is in general not smoothly solvable in neighborhoods of singular points
even if $\varphi$ is smooth (and $\debar$-closed), i.e., the complex $(\E_X^{p,\bullet},\debar)$ is in general not exact. 
Therefore $\K\varphi$ cannot be smooth in general. However, the singularities of $\K\varphi$ are not worse than that one can apply 
another $\K$-operator. In fact, one can apply $\K$-operators repeatedly. Using this one can construct, see Section~\ref{Asubsection} below
for details, sheaves $\A_X^{p,q}$ of certain currents, which are
closed under $\K$-operators and $\debar$. 
We have the following generalization of \cite[Theorem~1.2]{AS}.

\begin{theorem}\label{main1}
Let $X$ be a reduced complex space of pure dimension $n$. For each $p=0,\ldots,n$ 
there are fine sheaves $\A_X^{p,q}$, $q=0,\ldots,n$, of $(p,q)$-currents on $X$ with the 
standard extension property such that
\begin{itemize}
\item[(i)] $\E_X^{p,q}\subset \A_X^{p,q}$ and $\oplus_q \A_X^{p,q}$ is a module over
$\oplus_q\E_X^{0,q}$,

\item[(ii)] $\A_{X_{reg}}^{p,q}=\E_{X_{reg}}^{p,q}$,

\item[(iii)] the following sheaf complex is exact
\begin{equation}\label{intro:Akplx}
0\to \Om_X^p \hookrightarrow \A_X^{p,0} \stackrel{\debar}{\longrightarrow} \A_X^{p,1} \stackrel{\debar}{\longrightarrow}
\cdots \stackrel{\debar}{\longrightarrow} \A_X^{p,n} \to 0.
\end{equation}
\end{itemize}
\end{theorem}

That a current has the standard extension property (SEP) means roughly speaking that it is determined by its
restriction to any dense Zariski open subset, see Section~\ref{prelim} for the precise definition.

Since the $\A_X$-sheaves are fine, the de Rham theorem gives
the following generalization to the singular setting of the classical Dolbeault isomorphism.

\begin{corollary}\label{cor1}
Let $X$ be a reduced complex space of pure dimension, let $F\to X$ be a holomorphic vector bundle,
and let $\F$ be the associated locally free $\hol_X$-module. Then
\begin{equation*}
H^q(X,\F\otimes\Om_X^p) \simeq H^q\big(\A^{p,\bullet}(X,F),\debar\big).
\end{equation*}
\end{corollary}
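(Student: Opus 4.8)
The plan is to invoke the abstract de Rham (de Rham--Weil) theorem, using Theorem~\ref{main1} as the essential input. First I would tensor the resolution in Theorem~\ref{main1}(iii) with the locally free sheaf $\F$. Since $\F$ is locally isomorphic to $\hol_X^r$ and exactness of a complex of sheaves is a local property, tensoring with $\F$ preserves exactness, yielding a resolution
\begin{equation*}
0\to \F\otimes\Om_X^p \hookrightarrow \F\otimes\A_X^{p,0}\stackrel{\debar}{\longrightarrow}\cdots\stackrel{\debar}{\longrightarrow}\F\otimes\A_X^{p,n}\to 0
\end{equation*}
of the sheaf $\F\otimes\Om_X^p$; here $\debar$ acts on the $\A$-factor, extended to $F$-valued currents via the holomorphic transition functions of $F$.

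Next I would verify that the twisted sheaves $\F\otimes\A_X^{p,q}$ are acyclic for the global sections functor. By Theorem~\ref{main1}(i), each $\A_X^{p,q}$ is a module over $\E_X^{0,0}$ and hence fine; since $\F$ is locally free, $\F\otimes\A_X^{p,q}$ is locally isomorphic to a finite direct sum of copies of $\A_X^{p,q}$ and therefore remains a fine sheaf. On the (paracompact) complex space $X$, fine sheaves are soft, and soft sheaves have vanishing cohomology in positive degrees.

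Given a resolution of a sheaf by $\Gamma$-acyclic sheaves, the abstract de Rham theorem identifies the sheaf cohomology with the cohomology of the complex of global sections. Applying this to the resolution above gives
\begin{equation*}
H^q(X,\F\otimes\Om_X^p)\simeq H^q\big(\Gamma(X,\F\otimes\A_X^{p,\bullet}),\debar\big),
\end{equation*}
and it then remains only to identify $\Gamma(X,\F\otimes\A_X^{p,q})$ with the space $\A^{p,q}(X,F)$ of global $F$-valued currents of the stated type, which is immediate from the definition of $\F$ as the locally free $\hol_X$-module of sections of $F$.

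Since all the substantive content is already packaged in Theorem~\ref{main1}, I do not expect a genuine obstacle here; the only points requiring care are the standing paracompactness assumption on $X$ (needed so that fine sheaves are soft and hence acyclic) and the twin observations that both fineness and exactness are preserved under tensoring with the locally free sheaf $\F$.
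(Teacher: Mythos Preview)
Your proposal is correct and follows exactly the approach the paper indicates: the paper simply states that since $(\A_X^{p,\bullet},\debar)$ is a resolution of $\Om_X^p$ by fine sheaves, the de Rham theorem immediately yields the corollary. You have merely spelled out the standard details (tensoring with the locally free $\F$, preservation of fineness and exactness, paracompactness) that the paper leaves implicit.
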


Notice that since $(\A_X^{p,\bullet},\debar)$ is a resolution of $\Om_X^p$, whose sections in particular are smooth, and 
since $\E_X^{p,\bullet}\subset \A_X^{p,\bullet}$,
it follows from a well-known construction that each cohomology class in $H^q\big(\A^{p,\bullet}(X),\debar\big)$ 
has a smooth representative (cf., e.g., \cite[Section~7]{RSWSerre}).

The operators $\K$ and $\Proj$ in Theorem~\ref{main2} extend to operators $\A^{p,q}(X)\to\A^{p,q-1}(X')$
and $\A^{p,0}(X)\to \Om^p(X')$, respectively, and the integral formulas continue to hold; it is this generalized
version of Theorem~\ref{main2} that we will prove below. The operators $\K$ and $\Proj$ can be applied to more general
currents, for instance to any semi-meromorphic current. However, the integral formulas of Theorem~\ref{main2} then 
cannot hold in general. Indeed, if this were the case then, in particular, any $\debar$-closed meromorphic $p$-form
on $X$ would be in $\Om^p(X')$. This is to say that $\omega_X^p=\Om^p_X$, which is not true in general.
On the other hand, the obstruction to the integral formulas to hold is explicit and gives a residue criterion,
formulated in Theorem~\ref{rescrit} below, for a 
meromorphic $p$-form to be a section of $\Om^p_X$. This generalizes results by Tsikh \cite{Tsikh}, Andersson \cite{Acrit},
and Henkin-Passare \cite{HP}. The residue criterion leads to a geometric criterion, Proposition~\ref{extensionprop}, which in turn gives
the following geometric characterization of complex spaces with the property that any holomorphic $p$-form
on the regular part extends to a section of $\Om^p_X$. Recall that to a coherent analytic sheaf $\mathscr{G}$ on $X$ there 
are associated \emph{singularity subvarieties} $S_{0}(\mathscr{G})\subset S_1(\mathscr{G})\subset \cdots \subset X$, see, e.g., 
\cite[\S 1]{SiuTraut} or Section~\ref{AWcurrsektion} below.

\begin{proposition}\label{serreconditions}
Let $X$ be a reduced complex space of pure dimension $n$. Then the following conditions are equivalent.
\begin{itemize}
\item[(i)] $\textrm{codim}_X X_{sing} \geq 2$
and $\textrm{codim}_X S_{n-k}(\Om_X^p) \geq k+2$ for $k\geq 1$.

\item[(ii)] For any open $U\subset X$ the restriction map $\Om^p(U)\to \Om^p(U_{reg})$ is bijective.
\end{itemize}
\end{proposition}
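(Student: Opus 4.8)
The plan is to reformulate (ii) sheaf-theoretically and then read off (i) from the local structure of $\Om_X^p$. Write $j\colon X_{reg}\hookrightarrow X$ for the open inclusion. By its very definition $\Om_X^p$ is a subsheaf of $j_*\Om^p_{X_{reg}}$ (germs of forms on $X_{reg}$ admitting an ambient holomorphic extension), and $j^*\Om_X^p=\Om^p_{X_{reg}}$, so $(j_*j^*\Om_X^p)(U)=\Om^p(U_{reg})$ for every open $U$. Hence the restriction map in (ii) is induced by the canonical sheaf morphism $\Om_X^p\to j_*j^*\Om_X^p$, and (ii) holds for all $U$ exactly when this morphism is an isomorphism of sheaves. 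The four term exact sequence
\[
0\to\HH^0_{X_{sing}}(\Om_X^p)\to\Om_X^p\to j_*j^*\Om_X^p\to\HH^1_{X_{sing}}(\Om_X^p)\to 0
\]
then shows that (ii) is equivalent to the vanishing of the local cohomology sheaves $\HH^0_{X_{sing}}(\Om_X^p)$ and $\HH^1_{X_{sing}}(\Om_X^p)$, i.e.\ to $\textrm{depth}_{\mathscr I_{X_{sing}}}(\Om^p_{X,x})\ge 2$ for every $x\in X_{sing}$.

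Since $\Om_X^p\subset j_*\Om^p_{X_{reg}}$ is torsion free, $\HH^0_{X_{sing}}(\Om_X^p)=0$ (equivalently $S_0(\Om_X^p)=\emptyset$) holds automatically, so the whole content is $\HH^1_{X_{sing}}(\Om_X^p)=0$. The core of the proof is a dictionary turning this into the codimension bounds of (i). By the standard local-cohomology characterization of depth along an ideal, $\textrm{depth}_{\mathscr I_{X_{sing}}}(\Om^p_{X,x})\ge 2$ for all $x$ is equivalent to the statement that every irreducible $W\subseteq X_{sing}$ has $\textrm{depth}(\Om_X^p)\ge 2$ at its generic point $\eta_W$. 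I would then sort the $W$ by the stratum they generically occupy, using that along a general point $y\in W$ one has $\textrm{depth}(\Om^p_{X,y})=\textrm{depth}(\Om_X^p)_{\eta_W}+\dim W$. If $W$ lies generically in $S_\ell(\Om_X^p)\setminus S_{\ell-1}(\Om_X^p)$ with $\ell\le n-1$, then general $y\in W$ has depth $\ell$, so $\textrm{depth}(\Om_X^p)_{\eta_W}=\ell-\dim W$ and the requirement forces $\dim W\le\ell-2$, i.e.\ $\textrm{codim}_X S_\ell(\Om_X^p)\ge n-\ell+2$; setting $\ell=n-k$ this is exactly $\textrm{codim}_X S_{n-k}(\Om_X^p)\ge k+2$ for $k\ge1$. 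The remaining $W$ are those along which $\Om_X^p$ is generically Cohen--Macaulay (generic depth $n-\dim W$, hence general closed-point depth $n$, so $W$ is invisible to all $S_\ell$ with $\ell<n$); for these the requirement reads $n-\dim W\ge2$, i.e.\ $\textrm{codim}_X W\ge2$, and ranging over all components of $X_{sing}$ this is precisely $\textrm{codim}_X X_{sing}\ge2$. Running the same two computations in reverse shows that (i) grants $\textrm{depth}\ge 2$ at every generic point, closing the equivalence.

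The hard part will be exactly this bookkeeping: one must see that the single vanishing $\HH^1_{X_{sing}}(\Om_X^p)=0$ splits into the two separate requirements of (i), with the codimension-two condition on $X_{sing}$ carrying precisely the components along which $\Om_X^p$ is generically Cohen--Macaulay and therefore not detected by the $S_\ell$ for $\ell<n$. Making the genericity formula $\textrm{depth}(\Om^p_{X,y})=\textrm{depth}(\Om_X^p)_{\eta_W}+\dim W$ precise (it holds off a proper subvariety of $W$) and checking that it suffices to test the finitely many relevant $W$, namely the irreducible components of the $S_\ell(\Om_X^p)$ and of $X_{sing}$, is where the care is needed. Finally, I would remark that the sufficiency direction can be made explicit via the paper's analytic machinery: under (i) a given $\varphi\in\Om^p(U_{reg})$ extends because the residue obstruction of Theorem~\ref{rescrit} to $\varphi$ being strongly holomorphic is supported on sets of too large codimension to carry a nonzero current, whereupon $\Proj\varphi$ from Theorem~\ref{main2} is a concrete strongly holomorphic extension; dually, when a codimension bound fails one builds a meromorphic $p$-form with a genuine residue along the offending stratum, producing a section over $U_{reg}$ that does not extend.
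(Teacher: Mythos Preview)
Your reduction of (ii) to the vanishing of $\HH^0_{X_{sing}}(\Om_X^p)$ and $\HH^1_{X_{sing}}(\Om_X^p)$ via the four-term local cohomology sequence is correct, and so is the observation that $S_{n-k}(\Om_X^p)\subset X_{sing}$ for $k\ge 1$ (since $\Om_X^p$ is locally free on $X_{reg}$), which is what makes the codimension conditions in (i) match the Scheja/Siu--Trautmann criterion $\dim\big(X_{sing}\cap S_\ell(\Om_X^p)\big)\le \ell-2$. This is a legitimate and self-contained route to the equivalence, different from the paper's.

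The paper does something else. For (i)$\Rightarrow$(ii) it uses its own analytic machinery: under $\textrm{codim}_X X_{sing}\ge 2$ a holomorphic $p$-form on $U_{reg}$ extends to a section of $\omega_X^p$ (Barlet), hence is meromorphic, and then Proposition~\ref{extensionprop} (ultimately the residue criterion Theorem~\ref{rescrit} plus the dimension principle) forces strong holomorphicity; the point is that $\Proj\varphi$ is then an \emph{explicit} holomorphic extension to ambient space. For (ii)$\Rightarrow$(i) the paper does not argue with local cohomology directly but instead shows, using the fine resolution $(\A_X^{p,\bullet},\debar)$ and condition (ii), that the restriction $H^1(U,\Om_X^p)\to H^1(U_{reg},\Om_X^p)$ is injective for every open $U$; it then invokes \cite[Theorem~1.14, $(d)\Rightarrow(b)$]{SiuTraut} as a black box. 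So the paper trades the purely algebraic equivalence you outline for an argument that exercises the $\A$-sheaves and integral operators developed earlier; your route is shorter if one is willing to quote Scheja, the paper's route is what makes the statement ``explicit'' in its sense.

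The one genuinely soft spot in your write-up is the generic-point bookkeeping. The formula $\textrm{depth}(\Om^p_{X,y})=\textrm{depth}(\Om_X^p)_{\eta_W}+\dim W$ at a general $y\in W$ is not available as stated in the analytic category (there are no generic points), and even algebraically it needs a catenary/equidimensionality hypothesis and a genericity argument you only gesture at. Since what you are really after is precisely the equivalence $\HH^i_{X_{sing}}(\Om_X^p)=0$ for $i\le 1$ $\Leftrightarrow$ $\dim\big(X_{sing}\cap S_\ell(\Om_X^p)\big)\le \ell-2$, it is cleaner to cite this as Scheja's Hebbarkeitssatz \cite{Scheja} or \cite[Theorem~1.14]{SiuTraut} rather than rederive it through localisation at $\eta_W$. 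Your closing remark about building a form with a genuine residue when a codimension bound fails is suggestive but not a proof; the paper does not attempt this direction constructively either and simply defers to Siu--Trautmann.
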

 
This result is a variation on \cite[Theorem~1.14]{SiuTraut}, see also \cite{Scheja}, that is explicit 
in the sense mentioned above. Notice that for $p=0$ Proposition~\ref{serreconditions} is a normality criterion.
It is however also possible to verify directly that condition (i) with $p=0$ is equivalent to Serre's
conditions $R1$ and $S2$. From Proposition~\ref{serreconditions} we get the following result, see 
the end of Section~\ref{Asubsection}

\begin{corollary}\label{smoothcor}
Assume that $X$ is a reduced complete intersection. Then $X$ is smooth if and only if condition (i)
of Proposition~\ref{serreconditions} with $p=n$ holds.
\end{corollary}

\smallskip

In view of Corollary~\ref{cor1}, $H^q(X,\Om_X^p)$ encodes the global obstructions to solving the $\debar$-equation.
To get some control of these obstructions we will describe the 
dual of $H^q(X,\Om_X^p)$ in a way similar to the description given in Corollary~\ref{cor1} of $H^q(X,\Om_X^p)$ itself. 
This dual description provides a concrete analytic realization of 
Serre duality in the singular setting analogous to the classical one in the non-singular case. To do this we introduce
``dual'' concrete fine sheaves $\Bsheaf_X^{n-p,n-q}$ of certain $(n-p,n-q)$-currents on $X$.
The operators $\K$ and $\Proj$ correspond to integrating the kernels $k(\zeta,z)$ and $p(\zeta,z)$, respectively,
with respect to $\zeta$. Integrating with respect to $z$ instead gives operators $\check{\K}$ and $\check{\Proj}$
with different properties and applying $\check{\K}$-operators repeatedly gives these dual $\Bsheaf_X$-sheaves.
The case $p=n$ of the following result is proved in \cite{RSWSerre}. 

\begin{theorem}\label{main3}
Let $X$ be a reduced complex space of pure dimension $n$. For each $p=0,\ldots,n$ there are fine
sheaves $\Bsheaf_X^{p,q}$, $q=0,\ldots,n$, of $(p,q)$-currents on $X$ with the SEP such that
\begin{itemize}
\item[(i)] $\E_X^{p,q}\subset \Bsheaf_X^{p,q}$ and $\oplus_q\Bsheaf_X^{p,q}$ is a module over $\oplus_q\E_X^{0,q}$,
\item[(ii)] $\Bsheaf_{X_{reg}}^{p,q} = \E_{X_{reg}}^{p,q}$,
\item[(iii)] $0\to \Bsheaf_X^{p,0} \stackrel{\debar}{\longrightarrow} \Bsheaf_X^{p,1}
\stackrel{\debar}{\longrightarrow} \cdots \stackrel{\debar}{\longrightarrow} \Bsheaf_X^{p,n} \to 0$
is a sheaf complex with coherent cohomology sheaves $\omega_X^{p,q}:=\HH^q(\Bsheaf_X^{p,\bullet},\debar)$
and $\omega_X^p=\omega_X^{p,0}$. If $\Om_X^{n-p}$ is Cohen-Macaulay then $(\Bsheaf_X^{p,\bullet},\debar)$
is a resolution of $\omega_X^p$.
\end{itemize}
\end{theorem}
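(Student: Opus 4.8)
The plan is to construct the $\Bsheaf$-sheaves in direct analogy with the $\A$-sheaves of Theorem~\ref{main1}, but anchored to a resolution of $\Om_X^{n-p}$ rather than of $\hol_X$, exploiting the Serre/Grothendieck duality $\omega_X^p\simeq\Ext^\kappa_{\hol_D}(i_*\Om_X^{n-p},\Om_D^N)$. Working locally on a chart $i\colon X\hookrightarrow D\subset\C^N$ of codimension $\kappa=N-n$, I would fix a Hermitian locally free resolution $(\hol(E_\bullet),f_\bullet)$ of $i_*\Om_X^{n-p}$ over $D$ and form the associated residue and principal value currents $R=R^\kappa+R^{\kappa+1}+\cdots$ and $U$. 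These assemble into an intrinsic structure form, and $\Bsheaf_X^{p,q}$ is then defined in complete analogy with $\A_X^{p,q}$: as the sheaf of $(p,q)$-currents that locally arise from smooth forms and this structure form, so that $\oplus_q\Bsheaf_X^{p,q}$ becomes a module over $\oplus_q\E_X^{0,q}$. Such currents have the SEP by construction. Properties (i) and (ii) are then immediate: smooth forms sit inside $\Bsheaf^{p,q}$, wedging by smooth forms gives the module structure, and over $X_{reg}$ the residue part of $R$ vanishes, so $\Bsheaf^{p,q}$ collapses to $\E^{p,q}$. I must also check that this local construction is independent of the embedding and the chosen resolution, so that the sheaves glue to intrinsic sheaves on $X$; this rests on the robustness of the residue currents under the relevant comparison maps.

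The analytic heart of the matter is a Koppelman formula of the type of Theorem~\ref{main2}, now built from the resolution of $\Om_X^{n-p}$. I would form weighted kernels $k(\zeta,z)$ and $p(\zeta,z)$ from this resolution and obtain integral operators $\K$ together with a projection-type operator whose image lands in the Barlet sheaf $\omega_X^p(X')$. In contrast to Theorem~\ref{main1}, this projection does not kill every $\debar$-closed current: the defect is controlled exactly by the higher residue components $R^{\kappa+j}$, $j\geq1$. The formula nonetheless shows that $(\Bsheaf_X^{p,\bullet},\debar)$ is a $\debar$-complex and that a $\debar$-closed element of $\Bsheaf_X^{p,0}$ equals its own projection, hence is a $\debar$-closed meromorphic $p$-form; together with the reverse inclusion this identifies $\HH^0=\omega_X^{p,0}$ with $\omega_X^p$.

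For coherence of the sheaves $\omega_X^{p,q}=\HH^q(\Bsheaf_X^{p,\bullet},\debar)$ I would compare the complex with the algebraic data of the resolution. Since $R$ represents the extension class, a by-now standard comparison yields local isomorphisms $\omega_X^{p,q}\simeq\Ext^{\kappa+q}_{\hol_D}(i_*\Om_X^{n-p},\Om_D^N)$, which are coherent, and these patch to intrinsic coherent sheaves on $X$; for $q=0$ this recovers $\omega_X^p$. If $\Om_X^{n-p}$ is Cohen--Macaulay, the resolution may be taken of length exactly $\kappa$, so $R=R^\kappa$ is a single, $\debar$-closed Coleff--Herrera current and all higher residues vanish. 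The projection term then becomes a genuine projection onto $\omega_X^p$, and the Koppelman homotopy forces $\HH^q=0$ for $q\geq1$, so that $(\Bsheaf_X^{p,\bullet},\debar)$ is a resolution of $\omega_X^p$.

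The step I expect to be hardest is the coherence and $\Ext$-identification of the $\omega_X^{p,q}$ in the non-Cohen--Macaulay case. There one must convert the analytic behaviour of the higher residues $R^{\kappa+j}$---and in particular control their SEP and their interaction with $\debar$ inside the integral operators---into the purely algebraic $\Ext$-language, and do so uniformly enough that the local identifications are compatible with the gluing of the $\Bsheaf$-sheaves.
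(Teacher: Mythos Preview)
Your plan is essentially the paper's proof. The $\Bsheaf_X^{p,\bullet}$-sheaves are built exactly as you say from a Hermitian resolution of $\Om_X^{n-p}$, the Koppelman machinery is the $\check{\K},\check{\Proj}$ variant of Theorem~\ref{main4}, and the coherence/identification step is an explicit quasi-isomorphism $\varrho_q\colon \hol(E^*_{\kappa+q})\otimes\Om^N\to\Bsheaf_X^{p,q}$, $\xi\,dz\mapsto i^*\xi\cdot\omega_q$, whose surjectivity on cohomology is the Koppelman formula together with a Hefer-identity computation showing that the holomorphic coefficient in $\check{\Proj}\psi$ satisfies $f^*_{\kappa+q+1}(\cdot)=0$.

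Two small points to sharpen. First, in the non-CM case $\check{\Proj}$ does not land in $\omega_X^p$ for $q\geq 1$; it produces $A_q\wedge\omega_q$ with $A_q$ holomorphic, and it is only through the map $\varrho_q$ that this represents an $\Ext$-class. Second, $\E_X^{p,q}\subset\Bsheaf_X^{p,q}$ is not quite immediate: unlike the $\A$-sheaves, the definition of $\Bsheaf$ has the structure form $\omega$ at the innermost level, so one needs a division lemma (every smooth $(p,q)$-form on $X$ is $\omega_0\wedge\phi$ for some smooth $E_\kappa^*$-valued $\phi$) to embed smooth forms into $\Bsheaf$.
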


The proof of Theorem~\ref{main3} will show that if $i\colon X\hookrightarrow D\subset \C^N$, then
$i_*\omega_X^{p,q}\simeq \Ext_{\hol}^{\kappa+q}(\Om_X^{n-p},\mathit{\Omega}^N)$, where $\kappa=N-n$, $\hol=\hol_{\C^N}$,
and $\mathit{\Omega}^N=\mathit{\Omega}^N_{\C^N}$; we will use this notation throughout.

\begin{theorem}\label{main4}
Let $X$ be a pure $n$-dimensional analytic subset of a pseudoconvex domain $D\subset \C^N$,
let $D'\Subset D$ and set $X':=X\cap D'$. There are integral operators 
$\check{\K}\colon \Bsheaf^{p,q}(X)\to \Bsheaf^{p,q-1}(X')$ and 
$\check{\Proj}\colon \Bsheaf^{p,q}(X)\to \Bsheaf^{p,q}(X')$ such that
\begin{equation*}
\psi=\debar\check{\K}\psi + \check{\K}(\debar\psi) + \check{\Proj}\psi
\end{equation*}
on $X'$. If $\Om_X^{n-p}$ is Cohen-Macaulay and $\psi\in\Bsheaf^{p,q}(X)$ then
$\check{\Proj}\psi\in \omega^{p}(X')$ if $q=0$ and $\check{\Proj}\psi=0$ if $q\geq 1$.
\end{theorem}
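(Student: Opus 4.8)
The plan is to construct the operators by the same weighted integral-formula machinery that yields Theorem~\ref{main2}, but with kernels adapted to the module $\Om_X^{n-p}$, thereby reflecting the Serre duality between $\Om_X^p$ and $\omega_X^{n-p}$. Fix an embedding $i\colon X\hookrightarrow D$ and choose over a neighborhood of $\overline{D'}$ a Hermitian locally free resolution $(\hol(E_{\bullet}),f_{\bullet})$ of the $\hol_D$-module associated with $\Om_X^{n-p}$. From it I form the Andersson-Wulcan residue current $R=\sum_{\ell}R_{\ell}$ and a family of Hefer morphisms $H$, and I combine these with a Cauchy-Fantappi\`e-Leray form $B$ and a weight $g(\zeta,z)$ (equal to the identity on the diagonal) to produce the kernels $\check{k}(\zeta,z)$ and $\check{p}(\zeta,z)$ on $X\times X'$. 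These define the operators by
\[
\check{\K}\psi(z)=\lim_{\epsilon\to 0}\int_{X_{\zeta}}\chi_{\epsilon}\,\check{k}(\zeta,z)\wedge\psi(\zeta),\qquad
\check{\Proj}\psi(z)=\lim_{\epsilon\to 0}\int_{X_{\zeta}}\chi_{\epsilon}\,\check{p}(\zeta,z)\wedge\psi(\zeta),
\]
with $\chi_{\epsilon}=\chi(|h|^2/\epsilon)$ as in the text, where $\check{k}$ is integrable and smooth on $X_{reg}\times X'_{reg}$ and $\check{p}$ is a residue-type current in $\zeta$ depending holomorphically on $z$.

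That $\check{\K}\psi\in\Bsheaf^{p,q-1}(X')$ and $\check{\Proj}\psi\in\Bsheaf^{p,q}(X')$ is, as in Theorem~\ref{main3} and \cite{RSWSerre}, essentially built into the definition of the $\Bsheaf_X$-sheaves, which are generated over $\E_X^{0,\bullet}$ by exactly the currents these operators produce. The points to verify are that the limits exist, that the resulting currents have the SEP on $X'$, and that they agree on $X'_{reg}$ with the usual smooth Koppelman operators, which is precisely what is demanded by part~(ii) of Theorem~\ref{main3}.

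The central step is the homotopy identity. On $X_{reg}\times X'_{reg}$ the total kernel is a classical Koppelman kernel, so $\debar_{\zeta,z}\check{k}=[\Delta]-\check{p}$ there, and integrating against $\psi$ while passing $\debar$ through the integral gives $\psi=\debar\check{\K}\psi+\check{\K}(\debar\psi)+\check{\Proj}\psi$ on $X'_{reg}$. The task is to upgrade this to an identity of currents across $X'_{sing}$: I would insert the regularizer $\chi_{\epsilon}$, differentiate, and show that the error terms $\debar\chi_{\epsilon}\wedge(\cdots)$ tend to zero as $\epsilon\to 0$, which holds because every current involved has the SEP and $R$ annihilates the relevant ideal. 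Since both sides then have the SEP and coincide on $X'_{reg}$, they coincide on $X'$. This limiting analysis, rather than the algebraic bookkeeping of the kernels, is where I expect the main difficulty to lie.

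Finally, suppose $\Om_X^{n-p}$ is Cohen-Macaulay. Then the resolution may be chosen of minimal length $\kappa=N-n$, so $R$ reduces to a single top component $R_{\kappa}$, and by the identification $\omega_X^{p,q}\simeq\Ext_{\hol}^{\kappa+q}(\Om_X^{n-p},\Om^N)$ from Theorem~\ref{main3} we get $\omega_X^{p,q}=0$ for $q\geq 1$. Correspondingly the kernel $\check{p}(\zeta,z)$ factors as a residue/structure current in $\zeta$ times a holomorphic $p$-form in $z$, so $\check{\Proj}\psi$ is holomorphic in $z$, i.e.\ of bidegree $(p,0)$ in $z$. Hence $\check{\Proj}\psi$ vanishes when $q\geq 1$, while for $q=0$ it is a $\debar$-closed meromorphic $p$-form, that is a section of $\omega_X^p$ over $X'$, as claimed.
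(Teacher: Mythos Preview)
Your overall architecture matches the paper's: build the kernels from a Hermitian resolution of $\Om_X^{n-p}$ together with Hefer morphisms and a weight, obtain the Koppelman identity on $X'_{reg}$, and then extend across $X'_{sing}$ using the SEP. But there is a genuine confusion in how $\check{\K}$ and $\check{\Proj}$ act, and it propagates into a wrong argument for the Cohen--Macaulay statement.

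In the paper the checked operators use the \emph{same} kernels $k(\zeta,z)$, $p(\zeta,z)$ as $\K$, $\Proj$; the difference is that one integrates out $z$ rather than $\zeta$, and one chooses the weight $g$ holomorphic in $\zeta$ (with compact support in $z$). Concretely,
\[
\check{\Proj}\psi(\zeta)=\int_{X_z} p(\zeta,z)\wedge\psi(z),\qquad
p(\zeta,z)=\sum_{k\ge 0}\tilde p_{\kappa+k}(\zeta,z)\wedge\omega_k(\zeta),
\]
so the output retains the structure form in the \emph{remaining} variable: $\check{\Proj}\psi(\zeta)=\sum_k A_k(\zeta)\wedge\omega_k(\zeta)$ with $A_k$ holomorphic. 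For bidegree reasons only the term $k=q$ contributes. In the Cohen--Macaulay case one may take the resolution of length $\kappa$, whence $\omega=\omega_0$ and $\omega_q=0$ for $q\ge 1$; \emph{this} is the mechanism forcing $\check{\Proj}\psi=0$ when $q\ge 1$. For $q=0$ one gets $\check{\Proj}\psi=A_0\wedge\omega_0$ with $A_0$ holomorphic and $\omega_0$ a $\debar$-closed meromorphic $p$-form (Proposition~\ref{fundprop}(i)), hence a section of $\omega_X^p$.

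Your formulas instead integrate out $\zeta$ and conclude that $\check{\Proj}\psi$ is ``holomorphic in $z$''. That is the behavior of $\Proj$, which lands in $\Om_X^p$, not of $\check{\Proj}$; with your direction of integration the structure form is integrated away and there is no reason the result should lie in $\omega_X^p$ rather than $\Om_X^p$. Likewise, ``holomorphic in $z$'' does not by itself force vanishing for $q\ge 1$; the vanishing comes from $\omega_q=0$, which only makes sense if the structure form survives in the output variable. Once you reverse the integration and pick the weight holomorphic in the output variable, the rest of your outline (Koppelman on $X'_{reg}$ via Proposition~\ref{kprop2}, then SEP to pass to $X'$) is exactly the paper's proof.
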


Notice that if $\psi\in \omega^p(X)$ then, on $X'$, $\psi=\check{\Proj}\psi$ is a representation formula
for sections of $\omega_X^p$.

If $\varphi\in\A^{p,q}(X)$ and $\psi\in\Bsheaf^{n-p,n-q}(X)$ then the product $\varphi\wedge\psi$ exists, Theorem~\ref{RSWthm}.
On $X_{reg}$ it is just the exterior product of smooth forms, and this form turns out to have a unique extension to $X$
as a current with the SEP. Moreover, $\debar(\varphi\wedge\psi)=\debar\varphi\wedge\psi+(-1)^{p+q}\varphi\wedge\debar\psi$.
Hence, there is a pairing, the trace map, $(\varphi,\psi)\mapsto\int_X\varphi\wedge\psi$ and it descends to 
a trace map on cohomology.

\begin{theorem}\label{main5}
Let $X$ be a compact reduced complex space of pure dimension $n$ and let $F\to X$
be a holomorphic vector bundle. 
Then the following pairing is non-degenerate
\begin{equation}\label{par1}
H^q\big(\A^{p,\bullet}(X,F),\debar\big) \times H^{n-q}\big(\Bsheaf^{n-p,\bullet}(X,F^*),\debar\big) \to \C,
\end{equation}
\begin{equation}\label{par2}
([\varphi]_{\debar},[\psi]_{\debar}) \mapsto \int_X \varphi\wedge\psi.
\end{equation}
\end{theorem}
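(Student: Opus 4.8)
The plan is to establish non-degeneracy of the pairing \eqref{par1}--\eqref{par2} by combining the Koppelman formulas of Theorems~\ref{main2} and~\ref{main4} with the abstract Serre-duality machinery of Ramis--Ruget, reducing everything to the already-known Cohen-Macaulay case via a local-to-global/spectral-sequence argument. First I would make sense of the current product $\varphi\wedge\psi$ for $\varphi\in\A^{p,\bullet}(X,F)$ and $\psi\in\Bsheaf^{n-p,\bullet}(X,F^*)$; since both sheaves consist of currents that are smooth on $X_{reg}$ and have the SEP, the wedge product is well defined on $X_{reg}$ and extends by the SEP to a current on all of $X$, so that $\int_X\varphi\wedge\psi$ makes sense and, by Stokes together with $\debar(\varphi\wedge\psi)=\debar\varphi\wedge\psi\pm\varphi\wedge\debar\psi$, descends to cohomology. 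This is exactly the construction carried out for $p=0$ in \cite{RSWSerre}, and the first task is to check it goes through verbatim for general $p$.

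**Next** I would verify that the pairing is \emph{well defined on cohomology}, i.e.\ independent of representatives: if $\varphi=\debar\varphi'$ then $\int_X\debar\varphi'\wedge\psi=\pm\int_X\varphi'\wedge\debar\psi=0$ when $\debar\psi=0$, and symmetrically, using that $X$ is compact so there are no boundary terms. Then the heart of the argument is \textbf{non-degeneracy}, which I would attack on two fronts. For surjectivity of the induced map $H^q(\A^{p,\bullet}(X,F),\debar)\to H^{n-q}(\Bsheaf^{n-p,\bullet}(X,F^*),\debar)^*$, I would use Corollary~\ref{cor1}, which identifies the left-hand Dolbeault cohomology with $H^q(X,\F\otimes\Om_X^p)$, together with the identification coming from Theorem~\ref{main3}(iii) of the cohomology sheaves $\omega_X^{n-p,q}\simeq\Ext^{\kappa+q}_{\hol}(\Om_X^p,\Om^N)$. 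The plan is to feed these into the Ramis--Ruget duality pairing $H^q(X,\F\otimes\Om^p_X)\times\operatorname{Ext}^{-q}(X;\F\otimes\Om^p_X,\mathbf K_X^\bullet)\to\C$ and to match the analytic pairing \eqref{par2} with the abstract algebraic one term by term, using that the $\Ext$-groups computed from the locally free resolution $\hol(E_\bullet,f_\bullet)$ of $i_*\Om^p_X$ are precisely what the $\Bsheaf$-complex realizes.

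**The main obstacle** will be the fact that $\Bsheaf^{n-p,\bullet}$ is in general \emph{not} a resolution of a single sheaf: by Theorem~\ref{main3}(iii) it is only a complex whose cohomology sheaves $\omega_X^{n-p,q}$ may be nonzero for $q>0$ unless $\Om_X^p$ is Cohen-Macaulay. Consequently $H^{n-q}(\Bsheaf^{n-p,\bullet}(X,F^*),\debar)$ is not simply a sheaf-cohomology group but the hypercohomology of a complex, and I must run the two hyperco­homology spectral sequences $E_2^{r,s}=H^r(X,\omega_X^{n-p,s})\Rightarrow H^{r+s}(\Bsheaf^{n-p,\bullet}(X,F^*),\debar)$ and pair it against the corresponding filtration on the $\A$-side. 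Matching the differentials with those in the Ramis--Ruget dualizing complex, and checking that the resulting pairing of spectral sequences is a perfect pairing at each page (so that non-degeneracy on $E_\infty$ follows), is the delicate point; in the Cohen-Macaulay case the spectral sequence degenerates and one recovers the clean statement proved in \cite{RSWSerre}, which serves as the base case. The final step is to deduce that a perfect pairing of the associated graded pieces, together with finite-dimensionality of all groups (compactness of $X$ plus coherence), forces \eqref{par1}--\eqref{par2} itself to be non-degenerate.
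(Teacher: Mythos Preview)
Your overall strategy differs substantially from the paper's, and the plan as stated has a real gap. The paper does \emph{not} reduce to Ramis--Ruget; it gives a self-contained proof whose engine is an explicit \emph{local duality} theorem (Theorem~\ref{localdual}): on a Stein piece $X$, the integral operator $\check{\Proj}$ is used to show directly that $H^n(\Bsheaf_c^{n-p,\bullet}(X,F^*),\debar)$ is the topological dual of $\Om^p(X,F)$ via the pairing \eqref{hund2}, by representing an arbitrary continuous functional $\lambda$ on $\Om^p(X,F)$ as integration against an explicit element $\check{\Proj}\mu\in\Bsheaf_c^{n-p,n}(X)$. The global statement is then obtained by a \v{C}ech/precosheaf double-complex argument that patches these local dualities together; no comparison of $(\Bsheaf_X^{n-p,\bullet},\debar)$ with the dualizing complex $\mathbf{K}_X^{\bullet}$ is needed, and the higher cohomology sheaves $\omega_X^{n-p,q}$ play no role whatsoever in the argument.

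The gap in your plan is the step ``match the analytic pairing \eqref{par2} with the abstract algebraic one term by term.'' Ramis--Ruget gives a non-degenerate pairing $H^q(X,\F\otimes\Om_X^p)\times\text{Ext}^{-q}(X;\F\otimes\Om_X^p,\mathbf{K}_X^{\bullet})\to\C$, but to conclude non-degeneracy of \eqref{par2} you would need a quasi-isomorphism between $(\Bsheaf_X^{n-p,\bullet},\debar)$ and the relevant $\Hom$-complex into $\mathbf{K}_X^{\bullet}$ that is \emph{compatible with the two trace maps}. Knowing that the cohomology sheaves $\omega_X^{n-p,q}$ agree with the $\Ext$-sheaves is not enough: two complexes with isomorphic cohomology sheaves need not be quasi-isomorphic, and even when they are, an identification of their hypercohomology groups need not carry one pairing to the other. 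Your proposal to ``check that the resulting pairing of spectral sequences is a perfect pairing at each page'' presupposes precisely the compatibility that is at issue, and you give no mechanism for verifying it. The paper's route avoids this entirely: the local duality is proved \emph{for the concrete pairing itself}, using the Koppelman operators $\Proj$ and $\check{\Proj}$, so no comparison with an abstract dualizing object is ever required.
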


The case $p=0$ is proved in \cite{RSWSerre}.
It follows from Theorem~\ref{main5} together with Corollary~\ref{cor1} and Theorem~\ref{main3} that if $\Om_X^p$ is Cohen-Macaulay, then
there is a non-degenerate pairing $H^q(X,\Om_X^p)\times H^{n-q}(X,\omega_X^{n-p})\to \C$. For $p=0$ this is the 
well-known duality on Cohen-Macaulay spaces. 
For $p>0$ it follows that Barlet's sheaf $\omega_X^{n-p}$ is dualizing with respect to $\Om_X^p$ in the same way 
as the Grothendieck sheaf $\omega_X^n$ is dualizing with respect to $\hol_X$.
If $\Om_X^p$ is not Cohen-Macaulay,
then $\omega_X^{n-p}$ does not suffice to describe the dual of $H^q(X,\Om_X^p)$;
higher $\Ext$'s come into play. This is also the case in the classical duality by Ramis and Ruget \cite{RaRu}:
Given a coherent sheaf $\F$ on $X$ they describe the dual of
$H^q(X,\F)$ as $\text{Ext}^{-q}(X;\F,\mathbf{K}_X^{\bullet})$, where $\mathbf{K}_X^{\bullet}$ is 
the dualizing complex in the sense of \cite{RaRu}.

We notice the following consequence of Theorem~\ref{main5}: 
If $\varphi$ is a smooth $\debar$-closed $(p,q)$-form on $X$, then there is a smooth solution to the $\debar$-equation
on $X_{reg}$ if $\int_X\varphi\wedge\psi=0$ for all $\debar$-closed smooth $(n-p,n-q)$-forms $\psi$ on $X$.
Indeed, $\varphi$ defines an element in $H^{q}(\Bsheaf^{p,\bullet}(X),\debar)$ and 
each element in $H^{n-q}(\A^{n-p,\bullet}(X),\debar)$ has a smooth representative.

With a slight modification of the statement our Serre duality, Theorem~\ref{main5}, continues to hold on paracompact spaces provided 
certain separability conditions are fulfilled. In fact, 
instead of proving Theorem~\ref{main5}, we will prove the following slightly more general result: 

\emph{If $X$ is a reduced paracompact complex space of pure dimension $n$
and we replace $\Bsheaf^{n-p,\bullet}(X,F^*)$ in Theorem~\ref{main5} 
by the corresponding space of sections with compact support, then 
the conclusion of Theorem~\ref{main5} holds provided that $H^{q}(X,\F\otimes\Om_X^p)$ 
and $H^{q+1}(X,\F\otimes\Om_X^p)$ are Hausdorff.}

We remark that the Hausdorff assumption is automatically fulfilled if 
$X$ is compact or holomorphically convex; this follows from the Cartan-Serre theorem and Prill's result, \cite{Prill},
respectively.
Moreover, by the Andreotti-Grauert theorem, 
$H^{q}(X,\F\otimes\Om_X^p)$ and $H^{q+1}(X,\F\otimes\Om_X^p)$ are Hausdorff for $q\geq k$ if $X$ is $k$-convex.

\smallskip

{\bf Acknowledgment:} I would like to thank Professor Daniel Barlet for important comments on an 
earlier version of this paper as well as for finding and letting us include the alternative 
proof of Proposition~\ref{BHPprop} below.


\section{Preliminaries}\label{prelim}
Let $X$ be a pure $n$-dimensional reduced complex space. Following \cite[Section~4.2]{HL},
the $(p,q)$-currents on $X$ is the dual of the $(n-p,n-q)$-test forms $\mathscr{D}^{n-p,n-q}(X)$, i.e.,
the compactly supported sections of $\E_X^{n-p,n-q}$. 
More concretely, if $i\colon X\hookrightarrow D\subset\C^N$ is an embedding and $\mu$ is a
$(p,q)$-current on $X$, then $\nu:=i_*\mu$ is a $(p+\kappa,q+\kappa)$-current in $D$ (recall that $\kappa:=N-n$) 
and $\nu . \xi=0$ for any test form $\xi$ in $D$ whose pullback to $X_{reg}$ vanishes. Conversely,
if $\nu$ is such a current in $D$ then there is a current $\mu$ on $X$ such that $\nu=i_*\mu$.

Let $\chi$ be any smooth regularization of the characteristic function of $[1,\infty)\subset \R$;
throughout the paper, $\chi$ will denote such a function.
A current $\mu$ on $X$ is said to have the \emph{standard extension property} (SEP) with respect to
a subvariety $Z\subset X$ if 
$\chi(|h|^2/\epsilon)\mu_{\restriction_U} \to \mu_{\restriction_U}$ as $\epsilon \to 0$ for any open $U\subset X$,
where $h$ is any holomorphic tuple on $U$ not vanishing identically on any irreducible component of $Z\cap U$.
If $Z=X$ we simply say that $\mu$ has the SEP (on $X$).,

\subsection{Meromorphic forms}
Let here $X$ be a pure-dimensional analytic subset of some domain $D\subset \C^N$ and let $W$
be an analytic subset containing $X_{sing}$ but not any irreducible component of $X$.
It is proved in \cite{HP} that the following conditions on a holomorphic $p$-form $\varphi$ on $X\setminus W$
are equivalent. 1) $\varphi$ is locally the pullback to $X\setminus W$ of a meromorphic $p$-form in
a neighborhood of $X$. 2) For any desingularization $\pi\colon\tilde{X}\to X$ such that
$\pi^{-1}X_{reg}\simeq X_{reg}$, $\pi^*\varphi$ has a meromorphic extension to $\tilde{X}$.
3) There is a current $T$ in $D$ such that $i_*\varphi =T_{\restriction_{D\setminus W}}$, where 
$i\colon X\hookrightarrow D$ is the inclusion. 4) For any $h\in\hol(X)$ that vanishes on $W$, but
not identically on any component of $X$, the current
\begin{equation}\label{pool}
\mathscr{D}^{n-p,n}(X)\ni\xi\mapsto \lim_{\epsilon\to 0}\int_X\chi(|h|^2/\epsilon) \varphi\wedge\xi
\end{equation}
exists and is independent of $h$. 

The sheaf of germs of $p$-forms satisfying these conditions is called the sheaf of germs of
\emph{meromorphic $p$-forms} on $X$; we will denote it by $\mathscr{M}^p_X$. One can check 
that if $x\in X$ is an irreducible point then $\mathscr{M}^0_{X,x}$ is (isomorphic to)
the field of fractions of $\hol_{X,x}$. We usually make no distinction
between a meromorphic form $\varphi$ and the associated principal value current \eqref{pool}.

\subsection{Pseudomeromorphic currents}\label{PMsektion}
Pseudomeromorphic currents were introduced in \cite{AW2}; the definition we 
need and will use is from \cite{AS}.
In one complex variable $z$ it is elementary to see that the principal value current $1/z^m$ exists
and can be defined, e.g., as the limit as $\epsilon \to 0$
of $\chi(|h(z)|^2/\epsilon)/z^m$, where $h$ is a holomorphic function (or tuple) vanishing at $z=0$, 
or as the value at $\lambda=0$ of the analytic continuation
of the current-valued function $\lambda \mapsto |h(z)|^{{2\lambda}}/z^m$. 
It follows that the \emph{residue current} $\debar(1/z^m)$ can be computed as the 
limit of $\debar\chi(|h(z)|^2/\epsilon)/z^m$ or as the value at $\lambda=0$ of 
$\lambda \mapsto \debar |h(z)|^{{2\lambda}}/z^m$.
Since tensor products of currents are well-defined we can form the current
\begin{equation}\label{elementary}
\tau=\debar \frac{1}{z_1^{m_1}}\wedge \cdots \wedge \debar \frac{1}{z_r^{m_r}}\wedge 
\frac{\gamma(z)}{z_{r+1}^{m_{r+1}}\cdots z_n^{m_n}}
\end{equation}
in $\C^n$, where $m_1,\ldots,m_r$ are positive integers, $m_{r+1},\ldots,m_{n}$ are nonnegative integers, and 
$\gamma$ is a smooth compactly supported form. Notice that $\tau$ is anti-commuting in the residue factors 
$\debar(1/z_j^{m_j})$ and commuting in the principal value factors $1/z_k^{m_k}$.
We say that a current of the form \eqref{elementary} is an \emph{elementary pseudomeromorphic current}. 
Let $X$ be a pure-dimensional reduced complex space and let $x\in X$. 
We say that a germ of a current $\mu$ at $x$ is {\em pseudomeromorphic}
if it is a finite sum of pushforwards $\pi_*\tau=\pi_*^1 \cdots \pi_*^{\ell} \tau$, where $\mathcal{U}$ is 
a neighborhood of $x$,
\begin{equation*}
\mathcal{U}^{\ell} \stackrel{\pi^{\ell}}{\longrightarrow} \cdots \stackrel{\pi^{2}}{\longrightarrow} \mathcal{U}^1
\stackrel{\pi^{1}}{\longrightarrow} \mathcal{U}^0=\mathcal{U},
\end{equation*}
each $\pi^j$ is either a modification, a simple projection 
$\mathcal{U}^j=\mathcal{U}^{j-1}\times Z\to \mathcal{U}^{j-1}$, or 
an open inclusion, and $\tau$ is an elementary pseudomeromorphic current on $\mathcal{U}^{\ell}\subset\C^N$. 
The union of all germs of pseudomeromorphic currents on $X$ forms an open subset of the sheaf
of germs of currents on $X$ and thus defines a subsheaf $\PM_X$. Notice that since $\debar$ maps an elementary
pseudomeromorphic current to a sum of such currents it follows that $\debar$ maps $\PM_X$ to itself.

The following result is fundamental and will be used repeatedly in this paper.

\medskip

\noindent {\bf Dimension principle.} \emph{Let $X$ be a reduced pure-dimensional
complex space, let $\mu\in\PM(X)$, and assume that $\mu$ has support contained in a subvariety $V\subset X$. 
If $\mu$ has bidegree $(*,q)$ and $\textrm{codim}_X V>q$, then $\mu=0$.}

\medskip

This result is from \cite{AW2}, see also \cite[Proposition~2.3]{AS}. In connection to the dimension principle
we also mention that if $\mu\in\PM(X)$, $\textrm{supp}\,\mu\subset V$, and $h$ is a holomorphic function 
vanishing on $V$, then $\bar{h}\mu=0$ and $d\bar{h}\wedge\mu=0$. 
An arbitrary current $\mu$ with $\textrm{supp}\,\mu\subset V$ is of the form $\mu=i_*\tau$, where 
$i$ is the inclusion of $V$, for some current $\tau$ on $V$ if and only if
$h\mu=dh\wedge\mu=\bar{h}\mu=d\bar{h}\wedge\mu=0$ for all holomorphic $h$ vanishing on $V$.
Thus, if $\mu\in\PM(X)$, there is such a $\tau$
if and only if $h\mu=dh\wedge\mu=0$ for all holomorphic functions $h$
vanishing on $V$.

Another fundamental property of pseudomeromorphic currents is that they can be ``restricted''
to analytic (or constructible) subsets: Let $\mu\in\PM(X)$, let $V\subset X$ be an analytic subset,
and set $V^c:=X\setminus V$. Then the restriction of $\mu$ to the open subset $V^c$ has a natural 
pseudomeromorphic extension $\ett_{V^c}\mu$ to $X$. It follows that $\ett_V\mu:=\mu-\ett_{V^c}\mu$
is a pseudomeromorphic current with support contained in $V$. In \cite{AW2} $\ett_{V^c}\mu$ is defined as the 
value at $0$ of the analytic continuation of the current-valued function $\lambda\mapsto |h|^{2\lambda}\mu$,
where $h$ is any holomorphic tuple with zero set $V$; $\ett_{V^c}\mu$ can also be defined as
$\lim_{\epsilon\to 0}\chi(|h|^2v/\epsilon)\mu$, where $v$ is any smooth strictly positive function,
see \cite[Lemma~3.1]{AW3}, cf.\ also \cite[Lemma~6]{LSprodukter}.\footnote{$\epsilon$-approximations and 
$\lambda$-approximations can be used interchangeably; $\lambda$-approximations
are often computationally easier to work with while we believe that $\epsilon$-approximations
are conceptually easier. For the rest of this paper we will work with
$\epsilon$-approximations.} Taking restrictions is commutative, in fact, if $V$ and $W$ are any 
constructible subsets then $\ett_V\ett_W\mu=\ett_{V\cap W}\mu$.
Let us also notice that $\mu\in \PM(X)$ has the SEP (on $X$) precisely means that 
$\ett_V\mu=0$ for all germs of analytic subsets $V\subset X$ of positive codimension. We will denote by $\W_X$
the subsheaf of $\PM_X$ of currents with the SEP on $X$. From \cite[Section~3]{AW3} it follows that if 
$\pi\colon X'\to X$ is either a modification, a simple projection, or an open inclusion, and 
$\mu\in \W(X')$ then $\pi_*\mu\in\W(X)$.

\begin{lemma}\label{seplemma}
Let $X$ be a reduced complex space and let $Y\subset X$ be an analytic nowhere
dense subset. If $\mu\in\PM(X)\cap\W(X\setminus Y)$ then
$\ett_{X\setminus Y}\mu\in\W(X)$.
\end{lemma}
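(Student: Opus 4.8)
The plan is to verify the defining property of $\W(X)$ directly for $\nu_0:=\ett_{X\setminus Y}\mu$, namely that $\ett_V\nu_0=0$ for every germ of an analytic subset $V\subset X$ of positive codimension. Writing $Y^c=X\setminus Y$ and using that restrictions commute, $\ett_V\ett_W\mu=\ett_{V\cap W}\mu$ for constructible sets, I would first rewrite
\begin{equation*}
\ett_V\nu_0=\ett_V\ett_{Y^c}\mu=\ett_{V\cap Y^c}\mu=\ett_{V\setminus Y}\mu=:\nu,
\end{equation*}
so that everything reduces to proving $\nu=0$; note $\nu\in\PM(X)$ since restrictions preserve pseudomeromorphicity.

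Next I would localize away from $Y$. Since the operators $\ett_{(\cdot)}$ are defined by local limits $\chi(|h|^2/\epsilon)$, they commute with restriction to the open set $Y^c$, so
\begin{equation*}
\nu_{\restriction_{Y^c}}=\big(\ett_{V\setminus Y}\mu\big)_{\restriction_{Y^c}}=\ett_{V\cap Y^c}\big(\mu_{\restriction_{Y^c}}\big).
\end{equation*}
Each irreducible component of $V$ has positive codimension in $X$; a component contained in $Y$ does not meet $Y^c$, while a component not contained in $Y$ meets $Y^c$ in a set of positive codimension. Hence $V\cap Y^c$ is an analytic subset of positive codimension in the open set $Y^c$. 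By hypothesis $\mu_{\restriction_{Y^c}}\in\W(Y^c)$, i.e.\ it has the SEP on $Y^c$, so $\ett_{V\cap Y^c}(\mu_{\restriction_{Y^c}})=0$. Therefore $\nu_{\restriction_{Y^c}}=0$, that is, $\textrm{supp}\,\nu\subset Y$.

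It then remains to conclude $\nu=0$ from the two facts that $\textrm{supp}\,\nu\subset Y$ and that $\nu=\ett_{Y^c}(\ett_V\mu)$ is itself a restriction away from $Y$. Since $\ett_{Y^c}$ is idempotent ($\ett_{Y^c}\ett_{Y^c}=\ett_{Y^c}$), the latter gives $\ett_{Y^c}\nu=\nu$. On the other hand I would invoke the claim that any $\tau\in\PM(X)$ supported in $Y$ satisfies $\ett_{Y^c}\tau=0$; applied to $\tau=\nu$ this yields $\nu=\ett_{Y^c}\nu=0$, finishing the proof. This claim is the only genuine computation, and I expect it to be the main point: choosing locally a holomorphic tuple $h=(h_1,\dots,h_m)$ with $\{h=0\}=Y$, the support condition gives $\bar h_j\tau=0$ for all $j$, whence for $\real\,\lambda$ large
\begin{equation*}
|h|^{2\lambda}\tau=|h|^{2(\lambda-1)}\Big(\sum_j h_j\bar h_j\Big)\tau=\sum_j\big(|h|^{2(\lambda-1)}h_j\big)\,\bar h_j\tau=0.
\end{equation*}
Consequently the current-valued function $\lambda\mapsto|h|^{2\lambda}\tau$ vanishes identically for $\real\,\lambda$ large, so its analytic continuation is identically zero and its value at $\lambda=0$, which is precisely $\ett_{Y^c}\tau$, equals $0$.
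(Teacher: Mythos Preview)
Your proof is correct and follows essentially the same route as the paper's: both reduce to showing that $\ett_{X\setminus Y}\ett_V\mu=0$ by first observing (via the SEP hypothesis on $X\setminus Y$) that the relevant current has support in $Y$, and then using that $\ett_{X\setminus Y}$ kills pseudomeromorphic currents supported in $Y$. The only difference is that the paper leaves this last fact implicit, whereas you spell it out with the $\bar h_j\tau=0$ and $|h|^{2\lambda}$ computation; this is a welcome clarification rather than a different argument.
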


\begin{proof}
Let $V\subset X$ be a germ of an analytic nowhere dense subset. 
Since $\mu\in\W(X\setminus Y)$ we see that $\textrm{supp}\,\ett_V\mu \subset Y\cap V$
and so $\ett_V\ett_{X\setminus Y}\mu=\ett_{X\setminus Y}\ett_V\mu=0$.
\end{proof}

For future reference we give the following simple lemma, part (i) of which is almost tautological.

\begin{lemma}\label{obslemma}
Let $X$ be a germ of a reduced complex space and let $\mu\in\W(X)$.
\begin{itemize}
\item[(i)] We have that $\debar\mu\in\mathcal{W}(X)$ if and only if 
$\lim_{\epsilon\to 0} \debar \chi(|h|^2/\epsilon)\wedge\mu=0$ for all generically
non-vanishing holomorphic tuples $h$ on $X$.

\item[(ii)] Let $Y\subset X$ be an analytic nowhere dense subset,
let $h$ be a holomorphic tuple such that $Y=\{h=0\}$, and assume that 
$\debar\mu\in\W(X\setminus Y)$. Then $\debar\mu\in\W(X)$ if and only if
$\lim_{\epsilon\to 0} \debar \chi(|h|^2/\epsilon)\wedge\mu=0$.
\end{itemize}

\end{lemma}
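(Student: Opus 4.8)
The plan is to prove Lemma~\ref{obslemma} by exploiting the structure theory of pseudomeromorphic currents, specifically the relationship between the SEP and the vanishing of restrictions $\ett_V\mu$ to positive-codimension subvarieties. Throughout I would use the fact, recalled just before the lemma, that $\ett_{X\setminus Y}\mu$ can be computed as $\lim_{\epsilon\to 0}\chi(|h|^2/\epsilon)\mu$ when $Y=\{h=0\}$, together with the Leibniz-type decomposition of $\debar$ applied to such regularizations.

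For part (i), the key identity is the distributional Leibniz rule
\begin{equation*}
\debar\big(\chi(|h|^2/\epsilon)\mu\big) = \debar\chi(|h|^2/\epsilon)\wedge\mu + \chi(|h|^2/\epsilon)\,\debar\mu.
\end{equation*}
Since $\mu\in\W(X)$ has the SEP, the left-hand side tends to $\debar\mu$ as $\epsilon\to 0$ (because $\chi(|h|^2/\epsilon)\mu\to\mu$ and $\debar$ is continuous on currents), while the second term on the right tends to $\ett_{\{h\neq 0\}}\debar\mu = \ett_{X\setminus Y}\debar\mu$, where $Y=\{h=0\}$. Rearranging, the limit $\lim_{\epsilon\to 0}\debar\chi(|h|^2/\epsilon)\wedge\mu$ exists and equals $\debar\mu - \ett_{X\setminus Y}\debar\mu = \ett_Y\debar\mu$, which is a pseudomeromorphic current supported on $Y=\{h=0\}$. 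Now $\debar\mu\in\W(X)$ means precisely that $\ett_V\debar\mu=0$ for every germ of positive-codimension analytic subset $V$; taking $V=\{h=0\}$ for an arbitrary generically non-vanishing holomorphic tuple $h$ shows that the stated limit vanishes for all such $h$. Conversely, if all these limits vanish, then $\ett_Y\debar\mu=0$ for every such $Y$, and since every germ of a nowhere-dense analytic subset is of this form, this is exactly the SEP for $\debar\mu$. This is why part (i) is called almost tautological: it merely re-expresses $\ett_Y\debar\mu=0$ via the Leibniz rule.

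For part (ii), I would argue that the extra hypothesis $\debar\mu\in\W(X\setminus Y)$ localizes the failure of the SEP entirely to $Y$. By Lemma~\ref{seplemma}, applied to $\debar\mu$, the current $\ett_{X\setminus Y}\debar\mu$ already lies in $\W(X)$; so $\debar\mu\in\W(X)$ if and only if the difference $\ett_Y\debar\mu = \debar\mu - \ett_{X\setminus Y}\debar\mu$ vanishes. More precisely, for any germ of a nowhere-dense analytic $V\subset X$, the assumption $\debar\mu\in\W(X\setminus Y)$ forces $\textrm{supp}\,\ett_V\debar\mu\subset Y$, so that $\ett_V\debar\mu = \ett_{Y}\ett_V\debar\mu = \ett_{V}\ett_Y\debar\mu$; hence $\debar\mu\in\W(X)$ is equivalent to $\ett_Y\debar\mu=0$ alone, for the single given $h$. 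Combining this with the computation from part (i) that $\lim_{\epsilon\to 0}\debar\chi(|h|^2/\epsilon)\wedge\mu = \ett_Y\debar\mu$, I obtain the stated equivalence for the single tuple $h$ cutting out $Y$.

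The main obstacle I anticipate is justifying the passage to the limit rigorously, in particular verifying that $\chi(|h|^2/\epsilon)\,\debar\mu \to \ett_{X\setminus Y}\debar\mu$ rather than to $\debar\mu$; this requires that $\debar\mu$ be pseudomeromorphic (which it is, since $\debar$ preserves $\PM_X$) and invokes the characterization of $\ett_{X\setminus Y}$ via $\epsilon$-regularization from \cite[Lemma~3.1]{AW3}. The only genuine subtlety is that this regularization formula is stated for the SEP-computation of $\ett_{X\setminus Y}$ of a fixed pseudomeromorphic current, so I must ensure it applies to $\debar\mu$ and not presuppose $\debar\mu\in\W$; but since pseudomeromorphicity is all that is needed for the $\epsilon$-characterization of $\ett_{X\setminus Y}$, this causes no circularity. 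Everything else is a formal manipulation of the commuting restriction operators $\ett_V$.
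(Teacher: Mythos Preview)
Your proposal is correct and follows essentially the same approach as the paper: both proofs hinge on the Leibniz identity $\debar(\chi(|h|^2/\epsilon)\mu)=\debar\chi(|h|^2/\epsilon)\wedge\mu+\chi(|h|^2/\epsilon)\debar\mu$, identify $\lim_{\epsilon\to 0}\debar\chi(|h|^2/\epsilon)\wedge\mu$ with $\ett_{\{h=0\}}\debar\mu$, and then invoke Lemma~\ref{seplemma} for part~(ii). Your argument for the ``if'' direction of~(ii) via commutativity of the restriction operators is a mild rephrasing of the paper's observation that $\debar\mu=\ett_{X\setminus Y}\debar\mu$ combined with Lemma~\ref{seplemma}.
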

\begin{proof}
Since $\mu\in \mathcal{W}(X)$ we have that $\mu=\lim_{\epsilon\to 0} \chi(|h|^2/\epsilon)\mu$
for any generically non-vanishing $h$. It follows that
\begin{equation}\label{kungsbacka}
\debar\mu=\lim_{\epsilon\to 0} \debar(\chi(|h|^2/\epsilon)\mu)=
\lim_{\epsilon\to 0} \debar\chi(|h|^2/\epsilon)\wedge\mu + \lim_{\epsilon\to 0} \chi(|h|^2/\epsilon)\debar\mu.
\end{equation}
Now, $\debar\mu\in\mathcal{W}(X)$ if and only if the last term on the right hand side equals $\debar\mu$ 
for all generically non-vanishing $h$ and part (i) of the lemma follows. The ``only if'' part of
(ii) also follows directly from \eqref{kungsbacka}. On the other hand, if
$\lim_{\epsilon\to 0} \debar \chi(|h|^2/\epsilon)\wedge\mu=0$ then, by \eqref{kungsbacka},
$\debar\mu=\ett_{X\setminus Y}\debar\mu$ and so the ``if'' part of (ii) follows from Lemma~\ref{seplemma}.
\end{proof}

Recall that a current on $X$ is said to be semi-meromorphic if it a principal value current of the form
$\alpha/f$, where $\alpha$ is a smooth form and $f$ is a holomorphic function or section 
of a complex line bundle such that $f$ does not vanish identically on any component of $X$.
Following \cite{AS}, see also \cite{AW3}, we say that a current $a$ on $X$ is 
\emph{almost semi-meromorphic} if there is a modification $\pi\colon X'\to X$ and a semi-meromorphic 
current $\alpha/f$ on $X'$ such that $a=\pi_*(\alpha/f)$; if $f$ takes values in $L\to X'$ we need
also $\alpha$ to take values in $L\to X'$ if we want $a$ to be scalar valued.
If $a$ is almost semi-meromorphic on $X$ then the smallest Zariski-closed set outside of which $a$ is smooth
has positive codimension and is denoted $ZSS(a)$, the \emph{Zariski-singular support} of $a$, see \cite{AW3}.

For proofs of the statements in this paragraph we refer to \cite[Section~3]{AW3}, see also \cite[Section~2]{AS}.
Let $a$ be an almost semi-meromorphic current on $X$ and let $\mu\in\PM(X)$. Then there is a 
unique pseudomeromorphic current $T$ on $X$ coinciding with $a\wedge\mu$ outside of $ZSS(a)$ and such that
$\ett_{ZSS(a)}T=0$. If $h$ is a holomorphic tuple, or section of a 
Hermitian vector bundle, such that $\{h=0\}=ZSS(a)$, then $T=\lim_{\epsilon\to 0}\chi(|h|^2/\epsilon)a\wedge\mu$;
henceforth we will write $a\wedge\mu$ in place of $T$. One defines $\debar a\wedge \mu$ so that Leibniz' rule holds,
i.e., $\debar a\wedge \mu:=\debar(a\wedge\mu)-(-1)^{\textrm{deg}\, a}a\wedge\debar\mu$.
If $\mu\in\W(X)$ then $a\wedge\mu\in \W(X)$; in this case 
$a\wedge\mu=\lim_{\epsilon\to 0}\chi(|h|^2/\epsilon)a\wedge\mu$	if $h$ is any generically non-vanishing 
holomorphic section of a Hermitian vector bundle such that $\{h=0\}\supset ZSS(a)$.
If $\mu$ is almost semi-meromorphic then $a\wedge \mu$ is almost semi-meromorphic and, in fact,
$a\wedge\mu=(-1)^{\textrm{deg}\, a \, \textrm{deg}\, \mu}\mu\wedge a$.

Let $X$ be an analytic subset of pure codimension $\kappa$ of some complex $N$-dimensional manifold $D$.
The subsheaves of $\PM_D$ of germs of $\debar$-closed $(k,\kappa)$-currents, $k=0,\ldots,N$, with support on 
$X$ are the sheaves of Coleff-Herrera currents with support on $X$ and are denoted $\CH_X^k$. Coleff-Herrera currents were
originally introduced by Bj\"{o}rk as the $\debar$-closed currents $\mu$ on $D$ of bidegree $(N,\kappa)$
such that $\bar{h}\mu=0$ for any holomorphic function $h$ vanishing on $X$ and with the SEP with respect to
$X$, see, e.g., \cite{JEBabel}. It is proved in \cite{Auniqueness} that the definitions are equivalent.
The model example is the Coleff-Herrera product: Assume that $f_1,\ldots,f_{\kappa}\in\hol(D)$ defines a regular
sequence. Then the iteratively defined product 
$\debar(1/f_1)\wedge\cdots\wedge\debar(1/f_{\kappa})$ is the Coleff-Herrera product originally introduced
by Coleff and Herrera in \cite{CH} in a slightly different way; cf.\ also \cite{JEBHS}.

Let us also notice that if $X$ and $Z$ are reduced pure-dimensional complex spaces and
$\mu\in\PM(X)$, then $\mu\otimes 1\in \PM(X\times Z)$, see, e.g., \cite[Section~2]{AS}. We will usually 
omit ``$\otimes 1$'' and simply write, e.g., $\mu(\zeta)$ to denote which coordinates
$\mu$ depends on.

\subsection{Residue currents associated with generically exact complexes}\label{AWcurrsektion}
Let $E_j$, $j=0,\ldots,M$, be trivial vector bundles over an open subset of $\C^N$, 
let $f_j\colon E_j\to E_{j-1}$ be holomorphic mappings, and assume that 
\begin{equation}\label{eq:resol2}
0\to E_M \stackrel{f_M}{\longrightarrow} \cdots \stackrel{f_2}{\longrightarrow} E_1
\stackrel{f_1}{\longrightarrow} E_0 \stackrel{f_0}{\longrightarrow} 0,
\end{equation}
is a complex that is pointwise exact outside of an analytic subset $V$ of positive codimension. 
The bundle $E:=\oplus_jE_j$ gets a natural superstructure by setting $E^+:=\oplus_jE_{2j}$
and $E^-:=\oplus_jE_{2j+1}$.
Following \cite{AW1} we define currents $U$ and $R$ with values in $\textrm{End}(E)$ associated with 
\eqref{eq:resol} and a choice of Hermitian metrics on the $E_k$.\footnote{That a current takes values
in a vector bundle $F$ means that it acts on test-forms with values in $F^*$.} Notice that $\textrm{End}(E)$
gets an induced superstructure and so spaces of forms and currents with values in $E$ or $\textrm{End}(E)$
get superstructures as well.  
Let $f:=\oplus_jf_j$ and set 
$\nabla:=f-\debar$, which then becomes an odd mapping on spaces of forms or currents with values in $E$
such that $\nabla^2=0$;
notice that $\nabla$ induces an odd mapping $\nabla_{\textrm{End}}$ on $\textrm{End}(E)$-valued forms or currents
such that $\nabla^2_{\textrm{End}}=0$.
Outside of $V$, let $\sigma_k\colon E_{k-1}\to E_k$ be the pointwise minimal inverse of $f_k$, i.e.,
for each $z\notin V$, 
\begin{equation*}
\sigma_k(z) f_k(z)=\Pi_{(\textrm{Ker} f_k(z))^{\perp}}, \quad
f_k(z)\sigma_k(z) = \Pi_{\textrm{Im} f_k(z)},
\end{equation*}
where $\Pi$ denotes orthogonal projection.
Set $\sigma:=\sigma_1+\sigma_2+\cdots$ and let $u:=\sigma+\sigma\debar\sigma + \sigma(\debar\sigma)^2+\cdots$.
Notice that $u=\sum_{0<\ell}\sum_{0\leq k<\ell}u^k_{\ell}$, where 
$u^k_{\ell}:=\sigma_{\ell}\debar\sigma_{\ell-1}\cdots\debar\sigma_{k+1}$ is a smooth 
$\textrm{Hom}(E_k,E_{\ell})$-valued $(0,\ell-k-1)$-form outside of $V$.
One can show that $\nabla_{\textrm{End}}u=\textrm{Id}_{E}$.
We extend $u$ as a current across $V$ by setting
$U:=\lim_{\epsilon\to 0}\chi(|F|^2/\epsilon)u$,
where $F$ is a (non-trivial) holomorphic tuple vanishing on $V$, cf., \cite[Section~2]{AW1} and \cite[Theorem~5.1]{ABullSci}. 
As with $u$ we will write $U=\sum_{0<\ell}\sum_{0\leq k<\ell}U^k_{\ell}$, where now $U^k_{\ell}$ is a 
$\textrm{Hom}(E_k,E_{\ell})$-valued $(0,\ell-k-1)$-current.
\begin{remark}\label{ASMremark}
The procedure of taking pointwise minimal inverses produce almost semi-meromorphic currents,
see, e.g., \cite[Section~4]{AW3}. Thus
the $\sigma_j$ have almost semi-meromorphic extensions across $V$ and, letting 
$\sigma_j$ denote the extension as well, we have 
$U^k_{\ell}:=\sigma_{\ell}\debar\sigma_{\ell-1}\cdots\debar\sigma_{k+1}$, where 
the products are in the sense of Section~\ref{PMsektion} above.
In particular, each $U^k_{\ell}$ is an almost semi-meromorphic current in (some domain in) $\C^N$.
\end{remark}

The current $R$ is defined by the equation $\nabla_{\textrm{End}} U=\textrm{Id}_{E} - R$,
and hence, $R$ is supported on $V$ and $\nabla_{\textrm{End}}R=0$. 
Notice that $R$ is an almost semi-meromorphic current plus $\debar$ of such a current.
One can check that 
\begin{equation}\label{Rreg}
R=\lim_{\epsilon\to 0}\, \big(1-\chi(|F|^2/\epsilon)\big)\textrm{Id}_{E} + \debar\chi(|F|^2/\epsilon)\wedge u.
\end{equation}
We write $R=\sum_{0<\ell}\sum_{0\leq k<\ell}R^k_{\ell}$, where $R^k_{\ell}$ is a 
$\textrm{Hom}(E_k,E_{\ell})$-valued $(0,\ell-k)$-current. 


\smallskip

Now consider the sheaf complex 
\begin{equation}\label{eq:resol}
0\to \hol(E_M) \stackrel{f_M}{\longrightarrow} \cdots \stackrel{f_2}{\longrightarrow} \hol(E_1)
\stackrel{f_1}{\longrightarrow} \hol(E_0)
\end{equation}
associated with \eqref{eq:resol2}. Assume that \eqref{eq:resol} is exact so that it provides a free 
resolution of the sheaf $\F=\hol(E_0)/\Image f_1$. Recall that any coherent sheaf is of this form 
and has a free resolution locally. By definition, $\F$ has (co)dimension $r$ if 
the associated primes of each stalk $\F_x$ all have (co)dimension $\leq r$ ($\geq r$);
$\F$ has pure (co)dimension if all associated primes are equidimensional.
Let $Z_k$ be the set where $f_k$ does not have optimal rank; it is well-known that the $Z_k$ 
are analytic and independent of the choice of free resolution, thus invariants of $\F$.
Let $\kappa=\text{codim}\,\F$. 
By, e.g., \cite[Corollary~20.12]{Eis}, 
\begin{equation*}
\cdots \subset Z_k \subset Z_{k-1}\subset \cdots \subset Z_{\kappa+1}\subsetneq Z_{\kappa}=\cdots=Z_1.
\end{equation*}
Moreover, by \cite[Corollary~20.14]{Eis}, 
$\textrm{codim}\, Z_k\geq k+1$ for $k\geq \kappa+1$ if and only if $\F$ has pure codimension $\kappa$.
We recall also that $\F$ is Cohen-Macaulay if and only if 
$Z_k=\emptyset$ for $k\geq \kappa+1$, i.e., if and only if there is a resolution \eqref{eq:resol} of $\F$
with $M=\kappa$.

By definition, see \cite[\S 1]{SiuTraut}, the singularity subvarieties $S_{\ell}(\mathscr{G})$ of 
$\mathscr{G}:=\F\restriction_{Z_1}$ are the set of points $x\in Z_1$ such that
$\text{depth}_{\hol_{Z_1,x}}(\mathscr{G}_x)\leq \ell$.
It is straightforward to check that $Z_k$ is the set of points
$x\in \C^N$ such that the projective dimension of $\F_x$ is $\geq k$ and so,
from the Auslander-Buchsbaum formula, it follows that $S_{N-\ell}(\mathscr{G})=Z_{\ell}$.

It is proved in \cite{AW1} that if \eqref{eq:resol} is exact and $R$ is the associated current,
then $R=\sum_{\ell\geq \kappa}R^0_{\ell}$.
Moreover, a section $\varphi$ of $\hol(E_0)$ is in $\Image f_1$ if and only if (the $E$-valued) current $R\varphi$
vanishes. In what follows we will only be concerned with currents associated to exact complexes \eqref{eq:resol}.
We will therefore write $R_{\ell}:=R^0_{\ell}$.

\begin{example}\label{koszulex}
The model example is the Koszul complex: Let $f_1,\ldots,f_{\kappa}\in\hol(D)$ ($D$ a domain in $\C^N$) be
a regular sequence and let \eqref{eq:resol} with $M= \kappa$ be the associated Koszul complex,
which then is a resolution of $\hol/\langle f_1,\ldots,f_{\kappa}\rangle$.
With the trivial metric on the bundles $E_j$ the resulting $R$ is $R_{BM}\wedge e_{\kappa}\wedge e_0^*$,
where $R_{BM}$ is the residue current of Bochner-Martinelli type introduced in \cite{PTY} and 
$e_0$ and $e_{\kappa}$ are suitable frames for the line bundles $E_0$ and $E_{\kappa}$ respectively.
It is shown in \cite{PTY}, see also \cite{Auniqueness}, that $R_{BM}$ equals the Coleff-Herrera product in the present situation.
By \cite[Theorem~4.1]{AW1}, $R$ is in fact independent of the 
choice of Hermitian metric and so the above procedure always produce the Coleff-Herrera product (times
$e_{\kappa}\wedge e_0^*$) in the case of regular sequences.
\end{example}


\section{Strongly holomorphic $p$-forms on $X$}\label{strong}
Let $X=\{h_1=\cdots=h_{r}=0\}$ be a pure $n$-dimensional analytic subset of a neighborhood of $0$ in $\C^N$ and set
$\kappa:=N-n$; assume that $0\in X$. Let $\{\varphi_j\}$ and $\{\psi_j\}$ be finite sets of generators for 
$\mathit{\Omega}^p$ and $\mathit{\Omega}^{p-1}$ respectively and let $\tilde{\J}^p_X\subset \mathit{\Omega}^p$ be the coherent subsheaf
generated over $\hol$ by $\{h_i\varphi_j\}$ and $\{dh_i\wedge \psi_j\}$. It is clear that 
$\tilde{\J}^p_{X,x}=\mathit{\Omega}^p_x$ for $x$ outside of $X$, that $\textrm{codim}\,\mathit{\Omega}^p/\tilde{\J}^p_{X}=\kappa$, and
that $\mathit{\Omega}^p/\tilde{\J}^p_{X}$ is the standard sheaf of holomorphic $p$-forms on $X_{reg}$. By definition, 
$\mathit{\Omega}^p/\tilde{\J}^p_X\restriction_X=:\mathit{\Omega}_X^p$
is the sheaf of germs of K\"{a}hler-Grothendieck differential $p$-forms on $X$.
In general, $\mathit{\Omega}^p/\tilde{\J}^p_X$ has torsion.

\begin{example}
If $X=\{z_1^2=z_2^3\}\subset \C^3$ then $\varphi=2z_2dz_1 - 3z_1dz_2$ is a torsion element;
it is not in $\tilde{\J}^1_{X,0}$ even though the pullback of $\varphi$ to $X_{reg}$ vanishes.
More generally, if $X$ is a germ of an arbitrary reduced planar singular curve at $0\in\C^2$,
then, as one can check, $\mathit{\Omega}^1_{0}/\tilde{\J}^1_{X,0}$ always has torsion.
\end{example}

From a primary decomposition of $\tilde{\J}^p_{X,0}$ we see that there are coherent sheaves $\J^p_X$ and 
$\mathscr{S}^p_X$ in a neighborhood $\mathcal{U}$ of $0$ such that
$\tilde{\J}^p_X = \J^p_X \cap \mathscr{S}^p_X$,
$\mathit{\Omega}^p/\J^p_X$ has pure codimension $\kappa$, and $\mathit{\Omega}^p/\mathscr{S}^p_X$ has codimension 
$> \kappa$.\footnote{The reader familiar with gap-sheaves will recognize $\mathcal{J}^p_X$ as the 
relative gap-sheaf of $\tilde{\mathcal{J}}^p_X$ in $\Om^p$ with respect to $X_{sing}$;
see, e.g., \cite[p. 47]{SiuTraut}.}
It follows that $\tilde{\J}^p_X = \J^p_X$ outside of an analytic set of codimension $>\kappa$;
outside of an analytic set of codimension $>1$ in $X$ thus
$\tilde{\J}^p_X = \J^p_X$. Hence, the pullback of any section of $\J^p_X$ to $X_{reg}$ vanishes.
On the other hand, since $\mathit{\Omega}^p/\J^p_X$ has pure dimension it follows that any section $\varphi$ of $\mathit{\Omega}^p$ such that
the pullback of $\varphi$ to $X_{reg}$ vanishes in fact is a section of $\J^p_X$; this is well-known and also follows from
Proposition~\ref{fundprop} below. Thus, $\J^p_X$ is the sheaf of germs of holomorphic $p$-forms $\varphi$ in $\mathcal{U}$
such that $\varphi\wedge [X]=0$. We set $\Om_X^p:=\mathit{\Omega}^p/\J^p_X\restriction_X$ and we call the sections of 
$\Om_X^p$ \emph{strongly holomorphic $p$-forms}; these are thus precisely the $p$-forms on $X_{reg}$ that locally are the pullback 
to $X_{reg}$ of holomorphic $p$-forms in ambient space. Since $\J_X^p$ is coherent by construction, $\Om_X^p$ is coherent 
and it is readily checked that $\Om_X^p$ equals
$\mathit{\Omega}^p_X$ modulo torsion. Notice that the sections of $\Om_X^p$ define $\debar$-closed currents on $X$
with the SEP. We remark the the strongly holomorphic forms have been studied by several authors, e.g., in \cite{Ferrari}
and \cite{HP}.

For simplicity we will for the rest of this section assume that $X$ and $\J^p_X$ are defined in a neighborhood of 
the closure of the unit ball $\B$ of $\C^N$ and we denote the inclusion $X\hookrightarrow \B$ by $i$.
Moreover, we let \eqref{eq:resol} be a resolution of $i_*\Om^p_X=\mathit{\Omega}^p/\J^p_X$ with 
$E_0=\Lambda^{p,0}T^*\C^N$ so that $\hol(E_0)=\mathit{\Omega}^p$; recall also the associated sets
$Z_k$, cf.\ Section~\ref{AWcurrsektion}. 
Since $\Om^p_X$ has pure codimension we have $\textrm{codim}\, Z_k \geq k +1$, for $k=\kappa+1, \kappa+2,\ldots$,
and in particular $Z_N=\emptyset$. Hence,  we can, 
and will, assume that $M\leq N-1$ in \eqref{eq:resol}. 
The resolution \eqref{eq:resol} induces a complex \eqref{eq:resol2} that
is pointwise exact outside of $X$. A choice of Hermitian metrics on the $E_j$ gives us associated 
$\textrm{Hom}(E_0,E)$-valued currents
$U$ and $R$ so that, in particular, a holomorphic $p$-form $\varphi$ is a section of $\J^p_X$ if and only if 
the $E$-valued current $R\varphi$ vanishes.

\begin{example}\label{ex:smooth}
Assume that $X=\{w_1=\cdots=w_{\kappa}=0\}$, where $(z_1,\ldots,z_n; w_1,\ldots,w_{\kappa})$ are 
local coordinates in an open subset $U$ of $\C^N$. A basis for the $(p,0)$-forms in $U$ is given by
the union of $\{dz_I\wedge dw_J\}$, where $I$ and $J$ range over increasing multiindices
such that $|I|+|J|=p$. Let $E_0'$ and $E_0''$ be the subbundles of $\Lambda^{p,0}T^*U$ generated by
$dz_I$, $|I|=p$, and $dz_J\wedge dw_K$, $|J|<p$, respectively. It is clear that $\J^p_X$
is generated by $w_idz_J$, $i=1,\ldots,\kappa$, $|J|=p$ and $dz_I\wedge dw_J$, $|J|\geq 1$.
To get a resolution of $\Om^p_X$ we let, for each increasing multiindex $J\subset \{1,\ldots,n\}$ with $|J|=p$,
$\big(E^J_{\bullet}, f^J_{\bullet}\big)$ be the Koszul complex corresponding 
to $w_1,\ldots,w_{\kappa}$ and we identify $E^J_0$ with the line bundle generated by $dz_J$;
notice that $\oplus_{|J|=p}E^J_0=E_0'$.
It is well-known that $\big(\hol(E^J_{\bullet}), f^J_{\bullet}\big)$ is a resolution 
of the quotient $\hol dz_J/(w_1,\ldots,w_{\kappa})\hol dz_J$.
Let $\big(E_{\bullet}',f'_{\bullet}\big)$ be the direct sum of the complexes $\big(E^J_{\bullet}, f^J_{\bullet}\big)$
over all increasing multiindices $J$ with $|J|=p$.
Then
\begin{equation}\label{sumKoszul}
0\to \hol(E_{\kappa}') \stackrel{f_{\kappa}'}{\longrightarrow} \cdots \stackrel{f_{3}'}{\longrightarrow} \hol(E_2')
\stackrel{f_{2}'}{\longrightarrow} \hol(E_1')\oplus \hol(E_0'') 
\stackrel{f_{1}' \oplus \textrm{Id}}{\longrightarrow} \hol(E_0')\oplus \hol(E_0'')
\end{equation}
is a resolution of $\Om^p_X$ since \eqref{sumKoszul} is exact (as a direct sum of exact complexes) and the 
cokernel of the map $f_{1}' \oplus \textrm{Id}$ equals $\Om^p_X$.

Since $w_1,\ldots,w_{\kappa}$ is a regular sequence it follows that, for any choice of 
Hermitian metrics on the $E_i^J$, the current $R^J$ associated with $\big(E^J_{\bullet}, f^J_{\bullet}\big)$
equals 
\begin{equation*}
R^J = \varepsilon^J \otimes (dz_J)^*\otimes \debar \frac{1}{w_1}\wedge \cdots \wedge \debar \frac{1}{w_{\kappa}}, 
\end{equation*}
where $\varepsilon^J$ is a frame for $E^J_{\kappa}$, $(dz_J)^*$ is the dual of $dz_I$, and 
$\debar (1/w_1)\wedge \cdots \wedge \debar (1/w_{\kappa})$ is the Coleff-Herrera product,
cf.\ Example~\ref{koszulex}. 
Choosing a metric that respects the direct sum structure we get that
the current $R$ associated with \eqref{sumKoszul} equals
\begin{equation*}
R = \sum_{|J|=p}' \varepsilon^J \otimes (dz_J)^*\otimes \debar \frac{1}{w_1}\wedge \cdots \wedge \debar \frac{1}{w_{\kappa}}.
\end{equation*} 
Set $dz=dz_1\wedge\cdots\wedge dz_n$ and $dw=dw_1\wedge\cdots\wedge dw_{\kappa}$ and notice that by the Poincar\'e-Lelong formula
\begin{equation*}
R\wedge dw\wedge dz = \pm (2\pi i)^{\kappa} \sum_{|J|=p}' \varepsilon^J \otimes (dz_J)^*\otimes [X]\wedge dz.
\end{equation*}
If $\varphi$ is a $(p,n)$-test form in $U$ then we can view $\varphi$ as an $E_0$-valued $(0,n)$-test
form, where $E_0=E_0'\oplus E_0''=\Lambda^{p,0}T^*U$. With this point of view we see that 
\begin{equation*}
R\wedge dw\wedge dz . \varphi = \pm (2\pi i)^{\kappa} \sum_{|J|=p}' \varepsilon^J \int_X dz_{J^c}\wedge \varphi,
\end{equation*}
where $J^c=\{1,\ldots,n\}\setminus J$.
\end{example}

The preceding example indicates that the $(N,*)$-current $R\wedge dz$ is of the form $i_*\mu$ for some 
$(n-p,*)$-current $\mu$ on $i\colon X \hookrightarrow \B$; here and in the rest of the paper, $dz:=dz_1\wedge\cdots\wedge dz_N$. 
At first sight this seems to contradict 
the first paragraph of Section~\ref{prelim}. To shed some light on this notice first that $R_k\wedge dz$ is 
a (distribution-valued) section of $E_k\otimes E^*_0\otimes \Lambda^{N,k}T^*\B$ since
$\text{Hom}(E_0,E_k)\simeq E_k\otimes E^*_0$, and recall that 
$E_0=\Lambda^{p,0}T^*\B$. Interior multiplication induces a natural
isomorphism $E^*_0\otimes \Lambda^{N,k}T^*\B \to \Lambda^{N-p,k}T^*\B$ and moreover, if $\varphi$ is an
$E_0$-valued $(0,N-k)$-form then we can also view it as a $(p,N-k)$-form $\tilde{\varphi}$. We get a
diagram
\begin{equation*}
\begin{array}{ccc}
E^*_0\otimes \Lambda^{N,k}T^*\B & \stackrel{\varphi}{\longrightarrow} & \Lambda^{N,N}T^*\B \\
\downarrow & & || \\
\Lambda^{N-p,k}T^*\B & \stackrel{\wedge\tilde{\varphi}}{\longrightarrow} & \Lambda^{N,N}T^*\B
\end{array}
\end{equation*}
where $\varphi$ also denotes the natural map induced by $\varphi$, and the map $\wedge\tilde{\varphi}$ 
is defined by taking wedge product with $\tilde{\varphi}$. The diagram commutes, as can be checked, and
therefore we can view $R\wedge dz$ either as an $(N,*)$-current
with values in $\textrm{Hom}(E_0,E)\simeq E\otimes E_0^*$ or as an $(N-p,*)$-current
with values in $E$; with the first viewpoint $R\wedge dz$ acts naturally on $E_0\otimes E^*$-valued 
$(0,*)$-test forms and with the second one it acts on $E^*$-valued $(p,*)$-test forms
and the result is the same. For future reference we also note that with the first point of view $R\wedge dz$
can be naturally multiplied with smooth $E_0$-valued $(0,*)$-forms yielding $E$-valued currents; with the second point of view
$R\wedge dz$ can be naturally multiplied with scalar-valued $(p,*)$-forms yielding the same $E$-valued currents.
Unless explicitly said, we will use the second point of view (even though the notation might suggest otherwise). 

The following proposition is the analogue of \cite[Proposition~3.3]{AS} and the proof is essentially the same,
we therefore omit it.


\begin{proposition}\label{fundprop}
There is a unique almost semi-meromorphic current 
$\omega=\omega_0+\omega_1+\cdots+\omega_{n-1}$ on $X$, where $\omega_k$ is an $E_{\kappa+k}$-valued $(n-p,k)$-current,
such that 
\begin{equation*}
R\wedge dz = i_* \omega.
\end{equation*}
The current $\omega$ has the following additional structure.
\begin{itemize}
\item[(i)] If $\Om^p_X$ is Cohen-Macaulay,
then $\omega_0$ is an $E_{\kappa}$-valued section of $\omega^{n-p}_X$ over $X$. In general, 
there is a tuple $\tilde{\omega}_0$ of sections of $\omega^{n-p}_X$ over $X$
and a tuple $\alpha_0$ of almost semi-meromorphic $E_{\kappa}$-valued $(0,0)$-current in $\B$, smooth 
outside of $Z_{\kappa+1}$, such that
$\omega_0=\alpha_{0\restriction_X} \cdot \tilde{\omega}_0$ as currents on $X$. 

\item[(ii)] For $k\geq 1$ there are almost semi-meromorphic $(0,1)$-currents 
$\alpha_k$ in $\B$ with values in $\textrm{Hom}(E_{\kappa+k-1},E_{\kappa+k})$ that are smooth outside of $Z_{\kappa+k}$
and such that $\omega_{k}=\alpha_{k\restriction_X} \omega_{k-1}$ as currents.
\end{itemize}
\end{proposition}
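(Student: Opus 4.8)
The plan is to realize $\omega$ as the \emph{structure form} of the residue current $R$, following the cases $p=0$ (\cite{AS}) and $p=n$ (\cite{RSWSerre}), the only genuinely new ingredient being the bundle $E_0=\Lambda^{p,0}T^*\C^N$. First I would establish the pushforward $R\wedge dz=i_*\omega$ together with uniqueness. Regard $R\wedge dz$ in the first point of view, i.e.\ as a $\textrm{Hom}(E_0,E)$-valued $(N,*)$-current. For a holomorphic $h$ vanishing on $X$ and any holomorphic section $\varphi$ of $\hol(E_0)=\Om^p$, the form $h\varphi$ pulls back to $0$ on $X_{reg}$, hence $h\varphi$ is a section of $\J^p_X=\Image f_1$, so that $R(h\varphi)=0$; since $h$ thus annihilates $\Om^p_X$, the duality property of $R$ recalled in Section~\ref{AWcurrsektion} gives $hR=0$ and therefore $h(R\wedge dz)=(hR)\wedge dz=0$. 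On the other hand $dh\wedge dz=0$ for degree reasons, whence $dh\wedge(R\wedge dz)=\pm R\wedge dh\wedge dz=0$. By the criterion for a pseudomeromorphic current supported on $X$ to be a pushforward (Section~\ref{PMsektion}) there is a $\textrm{Hom}(E_0,E)$-valued $(n,*)$-current on $X$ whose pushforward is $R\wedge dz$; applying the interior-multiplication isomorphism $E_0^*\otimes\Lambda^{N,k}\cong\Lambda^{N-p,k}$, which commutes with $i_*$ exactly as in the computation of Example~\ref{ex:smooth}, yields the desired $E$-valued current $\omega$ in the second point of view. Uniqueness is immediate since $i_*$ is injective on currents.

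Since \eqref{eq:resol} is exact we have $R=\sum_{\ell\geq\kappa}R^0_\ell$, so $\omega=\sum_{k\geq 0}\omega_k$ with $R^0_{\kappa+k}\wedge dz=i_*\omega_k$; bidegree bookkeeping shows $\omega_k$ is an $E_{\kappa+k}$-valued $(n-p,k)$-current, and $M\leq N-1$ forces $k\leq n-1$. To prove (ii) I would work on $X\setminus Z_{\kappa+k}$, where $f_{\kappa+k}$ has optimal rank and the minimal inverse $\sigma_{\kappa+k}$ is smooth. There, unwinding $R=\lim_{\epsilon}\debar\chi\wedge u$ with $u^0_\ell=\sigma_\ell\debar u^0_{\ell-1}$ yields the pointwise identity $\omega_k=\debar\sigma_{\kappa+k}\wedge\omega_{k-1}$, as in \cite{AS,RSWSerre}. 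Let $\alpha_k$ be the almost semi-meromorphic extension across $Z_{\kappa+k}$ of $\debar\sigma_{\kappa+k}$; it exists, is smooth outside $Z_{\kappa+k}$, and is $\textrm{Hom}(E_{\kappa+k-1},E_{\kappa+k})$-valued, because $\sigma_{\kappa+k}$ is almost semi-meromorphic (Remark~\ref{ASMremark}), so that in a representation $\sigma_{\kappa+k}=\pi_*(\gamma/g)$ the current $\pi_*(\debar\gamma/g)$ is the required almost semi-meromorphic $(0,1)$-current. Then $\omega_k-\alpha_k|_X\omega_{k-1}$ is pseudomeromorphic of bidegree $(n-p,k)$ and supported on $Z_{\kappa+k}$, which by purity of $\Om^p_X$ satisfies $\textrm{codim}_X Z_{\kappa+k}\geq k+1>k$; the dimension principle kills it, giving $\omega_k=\alpha_k|_X\omega_{k-1}$ on all of $X$. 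In particular $\omega_k=\alpha_k|_X\cdots\alpha_1|_X\omega_0$, so once $\omega_0$ is almost semi-meromorphic the whole of $\omega$ is, by the product calculus of Section~\ref{PMsektion}.

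For the leading term I would argue as follows. The current $R^0_\kappa\wedge dz=i_*\omega_0$ is the pushforward of a Poincar\'{e}--Leray residue, so $\omega_0$ is almost semi-meromorphic; on $X_{reg}$ it is the smooth $(n-p,0)$-form dual to $R^0_\kappa$ computed in Example~\ref{ex:smooth}. If $\Om^p_X$ is Cohen--Macaulay one may take $M=\kappa$, so that $R=R^0_\kappa$ and $\debar R^0_\kappa=f_{\kappa+1}R^0_{\kappa+1}=0$; hence $\debar\omega_0=0$, and a $\debar$-closed almost semi-meromorphic $(n-p,0)$-current is precisely a meromorphic $\debar$-closed form, i.e.\ an $E_\kappa$-valued section of $\omega_X^{n-p}$. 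In general $\debar\omega_0=\pm f_{\kappa+1}\omega_1$ need not vanish; here I would compare \eqref{eq:resol} with a locally minimal resolution by a chain morphism and use the associated comparison formula for residue currents. The minimal resolution carries a $\debar$-closed structure form $\tilde\omega_0\in\omega^{n-p}(X,V)$, with $V$ its top bundle, and the comparison map induces an almost semi-meromorphic $\textrm{Hom}(V,E_\kappa)$-valued $(0,0)$-current $\alpha_0$, smooth where the two resolutions are homotopy equivalent, i.e.\ outside $Z_{\kappa+1}$, with $\omega_0=\alpha_0|_X\tilde\omega_0$.

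The main obstacle is the content of (ii) together with the almost semi-meromorphicity of $\omega$: a priori $R$ is only almost semi-meromorphic plus $\debar$ of such, so it is far from evident that wedging with $dz$ and pushing down to $X$ produces a genuinely almost semi-meromorphic current. The point where this is forced is the recursion $\omega_k=\alpha_k|_X\omega_{k-1}$, whose proof hinges on the delicate interplay between the smooth identity on $X\setminus Z_{\kappa+k}$, the almost semi-meromorphic calculus for the minimal inverses, and the dimension-principle extension across $Z_{\kappa+k}$; the comparison-of-resolutions argument underlying the general case of (i) is of comparable difficulty.
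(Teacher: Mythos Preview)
Your overall architecture---abstract pushforward via the criterion of Section~\ref{PMsektion}, then the recursion $\omega_k=\alpha_k|_X\omega_{k-1}$ extended across $Z_{\kappa+k}$ by the dimension principle---matches the paper's, and your treatment of (ii) and of the Cohen--Macaulay case of (i) is essentially correct. (A minor imprecision: the $\alpha_k$ are supplied by \cite[Theorem~4.4]{AW1} and are not literally $\debar\sigma_{\kappa+k}$; your derivation from $u^0_\ell=\sigma_\ell\debar u^0_{\ell-1}$ gives $R^0_{\kappa+k}=-\sigma_{\kappa+k}\,\debar R^0_{\kappa+k-1}$, not $\debar\sigma_{\kappa+k}\wedge R^0_{\kappa+k-1}$.)

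The genuine gap is the general case of (i). A locally minimal free resolution of $\Om^p_X$ has length equal to the projective dimension, which exceeds $\kappa$ precisely when $\Om^p_X$ fails to be Cohen--Macaulay; the level-$\kappa$ piece of its residue current is then \emph{not} $\debar$-closed, so your proposed $\tilde\omega_0$ does not land in $\omega^{n-p}(X,V)$, and no comparison morphism repairs this. The paper's argument avoids comparing resolutions altogether at this step and instead works on the dual side: choose a surjection $g^*\colon\hol(V^*)\twoheadrightarrow\Kernel f^*_{\kappa+1}\subset\hol(E^*_\kappa)$, so that $gf_{\kappa+1}=0$; then
\[
\debar(gR_\kappa)=g\,\debar R_\kappa=g f_{\kappa+1}R_{\kappa+1}=0,
\]
whence $g\omega_0$ is holomorphic on $X\setminus W$ and extends, by \cite{HP}, to the required $\tilde\omega_0\in\omega^{n-p}(X,V)$. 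The morphism $\alpha_0$ is the minimal inverse of $g$, smooth outside $Z_{\kappa+1}$ because there $\textrm{Im}\,f_{\kappa+1}(x)=\textrm{Ker}\,g(x)$ pointwise. One then checks $\alpha_0gR_\kappa=R_\kappa$ (first outside $Z_{\kappa+1}$, using that $R_\kappa$ takes values in $\textrm{Im}\,\sigma_\kappa=(\textrm{Ker}\,g)^\perp$, then everywhere by the dimension principle), which yields $\omega_0=\alpha_0|_X\,\tilde\omega_0$. This factorization is also what makes $\omega_0$---and hence, through the recursion, all of $\omega$---almost semi-meromorphic; your one-line appeal to a ``Poincar\'e--Leray residue'' does not establish this.
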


The form $\omega$ will be called an $n-p$-\emph{structure form}.

Since $R\wedge dz=i_*\omega$, where $\omega$ is almost semi-meromorphic on $X$, it follows that $R$ has the SEP
with respect to $X$. In particular, if $\varphi$ is a holomorphic $p$-form in ambient space such that 
the pullback of $\varphi$ to $X_{reg}$ vanishes, then the ($E$-valued) current $R\varphi$ vanishes, i.e.,
$\varphi$ is a section of $\J^p_X$.

\begin{lemma}\label{divlemma}
If $\varphi$ is a smooth $(n-p,q)$-form on $X$ then there is a smooth $(0,q)$-form $\phi$ 
on $X$ with values in $E_{\kappa}^*\restriction_X$ such that $\varphi=\omega_0\wedge \phi$.
\end{lemma}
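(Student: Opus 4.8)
The plan is to reduce the division to a \emph{holomorphic} lifting problem for the generators of $\Om_X^{n-p}$ and then let the module structure over smooth forms do the rest. The guiding observation is the following: if each generator $dz_K|_X$ of $\Om_X^{n-p}$ (with $|K|=n-p$) can be written as $\omega_0\wedge e_K^*$ for a \emph{holomorphic} section $e_K^*$ of $E_\kappa^*|_X$, then smoothness of the resulting $\phi$ is automatic, since a smooth coefficient times a holomorphic section is smooth across $X_{sing}$. In this way the entire difficulty is concentrated in producing the holomorphic lifts, and no separate extension argument across the singular locus is required afterwards. Throughout, the products $\omega_0\wedge(\cdot)$ of the almost semi-meromorphic current $\omega_0$ with smooth (or holomorphic) forms are the well-defined products of Section~\ref{PMsektion}.

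I would first reduce the bidegree. Writing a smooth $(n-p,q)$-form as $\varphi=\sum_i\eta_i\wedge\gamma_i$ with $\eta_i$ smooth $(n-p,0)$-forms and $\gamma_i$ smooth scalar $(0,q)$-forms, it suffices to solve $\omega_0\wedge\phi_i=\eta_i$ with $\phi_i$ a smooth $E_\kappa^*$-valued $(0,0)$-form; then $\phi:=\sum_i\phi_i\wedge\gamma_i$ works because $\gamma_i$ commutes through and $\omega_0\wedge(\phi_i\wedge\gamma_i)=(\omega_0\wedge\phi_i)\wedge\gamma_i$. Next I would reduce to generators: any smooth $(n-p,0)$-form on $X$ is $\eta=\sum_{|K|=n-p}a_K\,dz_K|_X$ with $a_K$ smooth on $X$ (restrict an ambient representative), so once $dz_K|_X=\omega_0\wedge e_K^*$ with $e_K^*$ holomorphic, the form $\phi_\eta:=\sum_K a_K e_K^*$ is smooth and $\omega_0\wedge\phi_\eta=\sum_K a_K(\omega_0\wedge e_K^*)=\eta$, using that each $\omega_0\wedge e_K^*=dz_K|_X$ is in fact smooth and so can be freely multiplied by $a_K$.

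It remains to produce the lifts $e_K^*$, and this is the step I expect to be the main obstacle. Over $X_{reg}$ it is explicit: by Example~\ref{ex:smooth} the resolution contains a Koszul complex as a direct summand and the components of $\omega_0$ are, up to constants, exactly the forms $dz_{J^c}|_X$ with $|J|=p$, so $(\varepsilon^{K^c})^*$ lifts $dz_K|_X$. Near the singular points I would instead use that $\tilde\omega_0$ generates $\omega_X^{n-p}$, i.e.\ that $v^*\mapsto\tilde\omega_0\wedge v^*$ is a surjective $\hol_X$-morphism $\hol(V^*)|_X\twoheadrightarrow\omega_X^{n-p}$. This is where Proposition~\ref{fundprop} enters: there $\tilde\omega_0=g\omega_0$, and the exactness of \eqref{nu} gives $\textrm{im}\,g^*=\ker f_{\kappa+1}^*$, so that under the residue/$\Ext$ description $\omega_X^{n-p}\simeq\Ext_{\hol}^{\kappa}(\Om_X^{p},\Om^N)=\ker f_{\kappa+1}^*/\textrm{im}\,f_\kappa^*$ the section $\tilde\omega_0$ represents the composite surjection $\hol(V^*)\to\ker f_{\kappa+1}^*\to\omega_X^{n-p}$. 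Since $dz_K|_X$ lies in $\Om_X^{n-p}\subseteq\omega_X^{n-p}$, it is in the image: locally near every point, singular points included, there is a holomorphic $\tilde e_K^*$ with $\tilde\omega_0\wedge\tilde e_K^*=dz_K|_X$, and then $e_K^*:=g^*\tilde e_K^*$ is a holomorphic section of $E_\kappa^*|_X$ with $\omega_0\wedge e_K^*=(g\omega_0)\wedge\tilde e_K^*=\tilde\omega_0\wedge\tilde e_K^*=dz_K|_X$.

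Finally I would assemble the pieces. The lifts exist locally, so I would cover $X$ by such neighborhoods, form the local solutions $\phi^\alpha$ from the two reductions above, and patch them with a smooth partition of unity $\{\rho_\alpha\}$, setting $\phi:=\sum_\alpha\rho_\alpha\phi^\alpha$; since $\omega_0\wedge\phi=\sum_\alpha\rho_\alpha(\omega_0\wedge\phi^\alpha)=\sum_\alpha\rho_\alpha\varphi=\varphi$, this yields a global smooth $E_\kappa^*$-valued $(0,q)$-form with $\varphi=\omega_0\wedge\phi$. Because the lifts are genuine holomorphic sections, the identities $\omega_0\wedge e_K^*=dz_K|_X$ hold as currents on all of $X$ (both sides being sections of the torsion-free sheaf $\omega_X^{n-p}$, which has the SEP), so no separate verification on $X_{sing}$ via the SEP is needed and the only remaining work is routine sign and bidegree bookkeeping.
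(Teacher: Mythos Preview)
Your argument is correct and rests on the same core idea as the paper's proof: reduce to holomorphic $(n-p,0)$-generators and then use the surjectivity of the map $\xi\mapsto i^*\xi\cdot\omega_0$ from $\ker f_{\kappa+1}^*$ onto $\omega_X^{n-p}$ (Proposition~\ref{BHPprop}, with $p$ and $n-p$ interchanged) to produce holomorphic lifts in $\hol(E_\kappa^*)$. The paper is simply more direct: it writes a smooth extension of $\varphi$ to $\B$ as $\sum_j\varphi_j'\wedge\varphi_j''$ with $\varphi_j'\in\Om^{n-p}(\B)$ and applies Proposition~\ref{BHPprop} straight to $\varphi_j'\wedge[X]\in\Hom_{\hol}(\Om_X^p,\CH_X^N)$, obtaining global holomorphic lifts $\xi_j\in\hol(E_\kappa^*)$ over $\B$ in one stroke; your detour through $\tilde\omega_0$, $V$, and $g^*$ (which just re-derives that surjectivity via \eqref{nu}), the separate treatment of $X_{reg}$, and the partition-of-unity patching are all unnecessary, since $\B$ is Stein and the sheaf surjection lifts to global sections.
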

\begin{proof}
Consider a smooth extension of $\varphi$ to $\B$; it can be written in the form $\sum_j\varphi_j'\wedge\varphi_j''$
where $\varphi_j'$ is a holomorphic $n-p$-form in $\B$ and $\varphi_j''$ is a smooth $(0,q)$-form in $\B$. 
The $(N-p,\kappa)$-current $\varphi_j'\wedge [X]$ can be viewed as a section of $\Hom_{\hol}(\Om_X^p,\CH^N_X)$
via $\Om_X^p\ni\psi\mapsto \psi\wedge \varphi_j'\wedge [X]$. By Proposition~\ref{BHPprop} below (with $p$ and $n-p$ interchanged)
there is a section $\xi_j$
of $\hol(E_{\kappa}^*)$ such that $i_*(i^*\xi_j\cdot \omega_0)=\varphi_j'\wedge [X]$. 
It follows that 
$\varphi= \sum_ji^*\xi_j \cdot \omega_0\wedge i^*\varphi_j''$.
\end{proof}


\section{Barlet's sheaf $\omega_X^p$.}\label{BHPsheaf}
The sheaf $\omega^p_X$ was introduced by Barlet in \cite{Barlet} as the kernel of 
a natural map $j_*j^*\mathit{\Omega}_X^p \to \HH^1_{X_{sing}}\big(\Ext_{\hol}^{\kappa}(\hol_X, \mathit{\Omega}^{\kappa+p})\big)$,
where $j\colon X_{reg}\hookrightarrow X$ is the inclusion. It is proved, \cite[Proposition~4]{Barlet}, 
that the sections of $\omega_X^p$ can be identified with
the holomorphic $p$-forms on $X_{reg}$ that have an extension to $X$ as a $\debar$-closed current with the SEP. 
Moreover, it is shown that $\omega_X^p$ is coherent and so 
$\omega_X^p/\Om_X^p$ is a coherent sheaf supported on $X_{sing}$. Hence, locally, for a suitable generically non-vanishing
holomorphic function $h$, one has $h\omega_X^p\subset \Om_X^p$. It follows that $\omega_X^p$ can be identified with
the sheaf of germs of meromorphic $p$-forms on $X$ that are $\debar$-closed considered as principal value currents;
we will use this as the definition of $\omega_X^p$.
This analytic point of view was emphasized and explored by Henkin and Passare, \cite{HP},
and therefore we often call sections of $\omega_X^p$ Barlet-Henkin-Passare holomorphic $p$-forms. 

From Barlet's definition, since $j_*j^*\mathit{\Omega}_X^p$ is torsion free, (and from the one we use as well) 
it is clear that $\omega_X^p$ is torsion free. 
Moreover, from \cite[p.\ 195]{Barlet} it follows that if $\textrm{codim}_X\, X_{sing} \geq 2$, 
then any holomorphic $p$-form on $X_{reg}$
extends (necessarily uniquely) to a section of $\omega_X^p$ over $X$. Thus, by \cite[Proposition~1.6]{Hart}, if $X$ is normal then 
$\omega_X^p$ is reflexive. On a normal space the reflexive hull of any reasonable sheaf of holomorphic $p$-forms therefore
coincides with $\omega_X^p$.

Let $X$ be a pure $n$-dimensional analytic subset of a neighborhood of $\overline{\B}\subset \C^N$, $\kappa=N-n$, and
let \eqref{eq:resol} be a resolution of $i_*\Om^{n-p}_X=\mathit{\Omega}^{n-p}/\J^{n-p}_X$ in $\B$; notice that now
$\hol(E_0) =\mathit{\Omega}^{n-p}$. Let $R=R_{\kappa}+\cdots$ be the current
associated with \eqref{eq:resol} (for some choice of Hermitian metrics),
let $i_*\omega= R\wedge dz$, and recall that  
$\omega_0$ is a $(p,0)$-current on $X$ with values in $E_{\kappa\restriction_X}$; 
cf., Proposition~\ref{fundprop} and the paragraph preceding it.
By dualizing and tensoring by $\mathit{\Omega}^N$ we get the complex 
$\big(\hol(E^*_{\bullet})\otimes \mathit{\Omega}^N, f_{\bullet}^*\otimes \textrm{Id}\big)$
with associated cohomology sheaves 
$\mathscr{H}^{\ell}\big(\hol(E^*_{\bullet})\otimes \mathit{\Omega}^N\big) \simeq \Ext^{\ell}_{\hol}\big(\Om^{n-p}_X, \mathit{\Omega}^N\big)$.
Let $\xi\in \hol(E^*_{\kappa})$ be such that $f^*_{\kappa+1}\xi=0$. Then
\begin{equation*}
\debar (\xi\cdot i_*\omega_0) = \xi\cdot\debar R_{\kappa} \wedge dz = \xi\cdot f_{\kappa+1}R_{\kappa+1} \wedge dz
=f_{\kappa+1}^*\xi \cdot R_{\kappa+1}\wedge dz=0,
\end{equation*}
and it follows that the current $i^*\xi \cdot \omega_0$ is $\debar$-closed on $X$. Hence, 
$i^*\xi\cdot \omega_0$ is a section of $\omega^p_X$. If $\xi=f^*_{\kappa} \xi'$ one checks in a similar
way that $i^*\xi \cdot \omega_0=0$ and we see that we have a mapping 
\begin{equation}\label{BHPmap}
\mathscr{H}^{\kappa}\big(\hol(E^*_{\bullet})\otimes \mathit{\Omega}^N\big)  \rightarrow  \omega^p_X, \quad
\left[\xi \right] \otimes dz  \mapsto   i^*\xi \cdot \omega_0.
\end{equation}

\begin{proposition}\label{BHPprop}
The mapping \eqref{BHPmap} is an isomorphism and it induces a natural isomorphism
$\displaystyle{\omega_X^p\simeq\Ext^{\kappa}_{\hol}\big(\Om^{n-p}_X, \mathit{\Omega}^N\big)}$.
\end{proposition}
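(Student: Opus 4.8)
The map \eqref{BHPmap} is $\hol_X$-linear between coherent sheaves, and, as verified just above the statement, it is well defined: for $\xi\in\hol(E_{\kappa}^*)$ with $f_{\kappa+1}^*\xi=0$ the current $i^*\xi\cdot\omega_0$ is $\debar$-closed, and, by Proposition~\ref{fundprop} (with the roles of $p$ and $n-p$ interchanged), it is an almost semi-meromorphic $(p,0)$-current with the SEP, hence a section of $\omega_X^p$, while $i^*(f_{\kappa}^*\xi')\cdot\omega_0=0$. The plan is to prove bijectivity by transporting everything to the dual complex $\hol(E_{\bullet}^*)$ through the identity $R_{\kappa}\wedge dz=i_*\omega_0$. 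Since $dz$ is non-vanishing and $i_*$ is injective on currents supported on $X$, one has $i_*(i^*\xi\cdot\omega_0)=(\xi\cdot R_{\kappa})\wedge dz$, where $\xi\cdot R_{\kappa}$ is the contraction of the $\Hom(E_0,E_{\kappa})$-valued current $R_{\kappa}=R^0_{\kappa}$ by $\xi$; consequently $i^*\xi\cdot\omega_0=0$ if and only if $\xi\cdot R_{\kappa}=0$. As a preliminary sanity check one computes the map on $X_{reg}$: there $R_{\kappa}$ is the Coleff--Herrera product of Example~\ref{ex:smooth}, $\omega_X^p\restriction_{X_{reg}}=\Om^p_{X_{reg}}$, and $\Ext^{\kappa}_{\hol}(\Om_X^{n-p},\Om^N)\restriction_{X_{reg}}\simeq\Om^p_{X_{reg}}$ via the perfect wedge pairing $\Om^{n-p}\otimes\Om^p\to\Om^n$, and the map is the identity.

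For injectivity I would establish the dual counterpart of the Andersson--Wulcan duality recalled in Section~\ref{AWcurrsektion} (there: $R\varphi=0$ iff $\varphi\in\Image f_1$): for $\xi\in\Kernel\big(f_{\kappa+1}^*\colon\hol(E_{\kappa}^*)\to\hol(E_{\kappa+1}^*)\big)$ one has $\xi\cdot R_{\kappa}=0$ if and only if $\xi\in\Image f_{\kappa}^*$. Near a point of $X_{reg}$ this reduces, by the argument of Example~\ref{ex:smooth}, to the corresponding nondegeneracy statement for a regular sequence, where $R_{\kappa}$ is a sum of Coleff--Herrera products. The passage across $X_{sing}$ is then handled by the dimension principle together with the auxiliary morphism $g$ and the minimal inverse $\alpha_0$ of Proposition~\ref{fundprop}, exactly in the spirit of the identity $\alpha_0 g R_{\kappa}=R_{\kappa}$ proved there, which expresses $R_{\kappa}$ in terms of a section $\tilde{\omega}_0$ of $\omega_X^p$ and so controls its annihilator.

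For surjectivity, take a section $\psi$ of $\omega_X^p$; then $i_*\psi$ is a $\debar$-closed $(p+\kappa,\kappa)$-current supported on $X$ with the SEP, i.e.\ a section of $\CH^{p+\kappa}_X$. The goal is to find $\xi\in\Kernel f_{\kappa+1}^*$ with $(\xi\cdot R_{\kappa})\wedge dz=i_*\psi$, equivalently $i^*\xi\cdot\omega_0=\psi$. I would obtain this from the local duality between Coleff--Herrera currents and the $\Ext$-sheaves (in the classical form, see \cite{JEBabel}, and in the form realized through the residue current $R$ in \cite{AW1}): $R_{\kappa}$ represents a generator of $\Ext^{\kappa}_{\hol}(\Om_X^{n-p},\Om^N)$, so every section of $\CH^{p+\kappa}_X$ annihilated by the same operators as $i_*\omega_0$ is of the form $(\xi\cdot R_{\kappa})\wedge dz$; and $i_*\psi$ is of this kind because $\psi$ wedges $\Om_X^{n-p}$ into $\omega_X^n\simeq\CH^N_X$. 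An equivalent, more hands-on formulation is that the currents $i^*\xi\cdot\omega_0$, $\xi\in\Kernel f_{\kappa+1}^*$, generate $\omega_X^p$ as an $\hol_X$-module, which one tries to read off from the fact that $\tilde{\omega}_0=g\omega_0$ is itself a $V$-valued section of $\omega_X^p$ and $\omega_0=\alpha_0\tilde{\omega}_0$.

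The second isomorphism is then immediate: $\mathscr{H}^{\kappa}\big(\hol(E_{\bullet}^*)\otimes\Om^N\big)\simeq\Ext^{\kappa}_{\hol}(\Om_X^{n-p},\Om^N)$ is standard, and since the target $\omega_X^p$ and the source $\Ext^{\kappa}$ are both intrinsic, the isomorphism is independent of the chosen resolution \eqref{eq:resol} and Hermitian metrics, in accordance with the metric-independence of $R$ in the exact case \cite[Theorem~4.1]{AW1}. The main obstacle is the surjectivity step, equivalently the full duality identifying $\Ext^{\kappa}_{\hol}(\Om_X^{n-p},\Om^N)$ with the sheaf of Coleff--Herrera currents represented by $R_{\kappa}$: outside the Cohen--Macaulay case the resolution \eqref{eq:resol} has length $>\kappa$, so $R$ carries the extra terms $R_{\kappa+1},R_{\kappa+2},\dots$, and one must show that the condition $f_{\kappa+1}^*\xi=0$ together with the structure of $\omega$ from Proposition~\ref{fundprop} suffices to reconstruct an arbitrary $\psi\in\omega_X^p$ across $X_{sing}$. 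This is precisely the point at which a separate, more algebraic argument (such as Barlet's) becomes a useful alternative.
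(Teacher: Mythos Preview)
Your approach and the paper's share the same core ingredient: the duality statement that
\[
[\xi]\otimes dz \;\longmapsto\; \xi\cdot R_{\kappa}\wedge dz
\]
is an isomorphism from $\mathscr{H}^{\kappa}\big(\hol(E_{\bullet}^*)\otimes\Om^N\big)$ onto $\Hom_{\hol}(\Om_X^{n-p},\CH_X^N)$. The difference is organizational but decisive. The paper does not attempt to prove this duality; it is quoted as \cite[Theorem~1.5]{ANoetherDual}. The paper then introduces the obvious injective map $\omega_X^p\hookrightarrow\Hom_{\hol}(\Om_X^{n-p},\CH_X^N)$, $\varphi\mapsto(\psi\mapsto i_*\varphi\wedge\psi)$, and observes that the three maps form a commutative triangle with \eqref{BHPmap} as the third side. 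Since the diagonal is an isomorphism and the vertical is injective, both remaining maps are forced to be isomorphisms; naturality also comes from \cite{ANoetherDual}. This triangle trick is what you are missing, and it is what makes the paper's proof two paragraphs long.

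By contrast, you try to prove injectivity and surjectivity of \eqref{BHPmap} directly, which amounts to reproving \cite[Theorem~1.5]{ANoetherDual} from scratch. Neither of your sketches is complete. For injectivity, the passage ``handled by the dimension principle together with $g$ and $\alpha_0$'' is not an argument: the identity $\alpha_0 g R_{\kappa}=R_{\kappa}$ tells you something about how $R_{\kappa}$ factors, but it does not by itself identify $\{\xi:\xi\cdot R_{\kappa}=0\}$ with $\Image f_{\kappa}^*$; the actual proof in \cite{ANoetherDual} requires more, in particular a comparison with a free resolution of the dual complex. For surjectivity you write that you ``would obtain this from the local duality between Coleff--Herrera currents and the $\Ext$-sheaves'', but that duality \emph{is} the statement in question, so the argument is circular. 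You correctly flag this step as the main obstacle, and you are right that Barlet's algebraic proof is an independent route; the paper in fact includes Barlet's argument as an alternative. But as written, your proposal is a plan rather than a proof: the substantive content lies in the cited theorem, and the clean way to use it is via the commutative triangle.
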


\begin{proof}
Let $\varphi$ be a section of $\omega_X^p$. Then $i_*\varphi$ is a $\debar$-closed 
$(\kappa+p,\kappa)$-current in $\B$ and it induces a map $\mathit{\Omega}^{n-p}\to \CH^N_X$ by
\begin{equation}\label{oslo}
\psi\mapsto i_*\varphi\wedge \psi,
\end{equation}
whose kernel clearly contains $\J^{n-p}_X$. Hence, \eqref{oslo}
induces a map $\mathit{\Omega}^{n-p}/\J^{n-p}_X\to \CH^N_X$. Thus, we get a map
$\omega_X^p\to \Hom_{\hol}(\Om^{n-p}_X, \CH^N_X)$, which one easily checks is injective.
In view of \eqref{BHPmap} we get a commutative diagram
\begin{equation}\label{diagramm}
\xymatrix{
\mathscr{H}^{\kappa}\big(\hol(E^*_{\bullet})\otimes \mathit{\Omega}^N\big) \ar[r] \ar[dr] & \omega^p_X \ar[d] \\
& \Hom_{\hol}\big(\Om^{n-p}_X, \CH^N_X\big),
}
\end{equation}
where the diagonal map is the composition, i.e., the map given by 
$\left[\xi\right]\otimes dz \mapsto \xi\cdot R_{\kappa}\wedge dz$, where we here view $R_{\kappa}\wedge dz$
as a $\textrm{Hom}(E_0,E_{\kappa})$-valued $(N,\kappa)$-current.
By \cite[Theorem~1.5]{ANoetherDual} this map is an isomorphism and
since the vertical map is injective it follows that both the horizontal map and the vertical map are isomorphisms.
From \emph{ibid}.\ we also know that the diagonal map is independent of the choices of Hermitian resolution
of $\Om^{n-p}_X$ and of $dz$. 
\end{proof}

Barlet has recently found an elegant algebraic proof of the isomorphism
$\omega^p_X\simeq\Ext^{\kappa}_{\hol}\big(\Om^{n-p}_X, \mathit{\Omega}^N\big)$ of Proposition~\ref{BHPprop}
that he has communicated to us
and generously let us include here.

\begin{proof}[Alternative proof of Proposition~\ref{BHPprop}.]
There is a natural map $\mathit{\Omega}^{n-p}_X\to\Om_X^{n-p}$; denote the kernel by $\mathscr{T}$ and notice that it has 
codimension $>\kappa$. Thus, $\Ext^{k}_{\hol}(\mathscr{T},\mathit{\Omega^N})=0$
for $k\leq \kappa$. Applying the functor $\Hom_{\hol}(-,\mathit{\Omega}^{N})$ to the exact sequence
$0\to\mathscr{T}\to \mathit{\Omega}^{n-p}_X\to\Om_X^{n-p}\to 0$
we get a long exact sequence of $\Ext$-sheaves. From this, and the vanishing of $\Ext^{k}(\mathscr{T},\mathit{\Omega^N})$
for $k\leq \kappa$, it follows that
$\Ext^{\kappa}_{\hol}\big(\Om^{n-p}_X, \mathit{\Omega}^N\big)\simeq\Ext^{\kappa}_{\hol}\big(\mathit{\Omega}^{n-p}_X, \mathit{\Omega}^N\big)$. 

Let $\G:=(d\J_X^0\wedge\mathit{\Omega}^{n-p-1})\cap(\J_X^0\mathit{\Omega}^{n-p})$, let
$\F:=d\J_X^0\wedge\mathit{\Omega}^{n-p-1}/\G$, and notice that $\F$ and $\G$ are $\hol_X$-modules;
$\mathcal{J}_X^0\subset\hol$ is the ideal defining $X$, cf.\ Section~\ref{strong}.
We have a natural short exact sequence of $\hol_X$-modules in $\B$
\begin{equation*}
0\to \F \longrightarrow \hol_X \otimes \mathit{\Omega}^{n-p} \longrightarrow \mathit{\Omega}_X^{n-p} \to 0.
\end{equation*}
Applying $\Hom_{\hol}(-,\mathit{\Omega}^{N})$ we again obtain a long exact sequence
of $\Ext$-sheaves. Since $\text{codim}\, X=\kappa$ these sheaves vanish until level $\kappa$ and in particular one 
gets the exact sequence
\begin{equation*}
0\to \Ext^{\kappa}_{\hol}(\mathit{\Omega}_X^{n-p},\mathit{\Omega}^{N}) \longrightarrow
\Ext^{\kappa}_{\hol}(\hol_X\otimes \mathit{\Omega}^{n-p}, \mathit{\Omega}^{N}) \stackrel{b}{\longrightarrow}
\Ext^{\kappa}_{\hol}(\F,\mathit{\Omega}^{N}).
\end{equation*}
Since $\mathit{\Omega}^{n-p}$ is a free $\hol$-module and since $\Ext^{\kappa}_{\hol}(\hol_X,\mathit{\Omega}^{N})\simeq i_*\omega_X^n$ 
by \cite[Lemma~4]{Barlet}, one has
\begin{equation*}
\Ext^{\kappa}_{\hol}(\hol_X\otimes \mathit{\Omega}^{n-p}, \mathit{\Omega}^{N}) \simeq
\Hom_{\hol}(\mathit{\Omega}^{n-p}, \Ext^{\kappa}_{\hol}(\hol_X,\mathit{\Omega}^{N})) \simeq
\Hom_{\hol}(\mathit{\Omega}^{n-p}, i_*\omega_X^n).
\end{equation*}
Since $\omega_X^p\simeq \Hom_{\hol_X}(\mathit{\Omega}_X^{n-p}, \omega_X^n)$ by \cite[Proposition~3]{Barlet},
we will be done if we can show that the kernel of the map $b$ above consists of those 
homomorphisms $\mathit{\Omega}^{n-p}\to i_*\omega_X^n$ which in fact are homomorphisms $\mathit{\Omega}_X^{n-p}\to \omega_X^n$;
since $\J_X^0i_*\omega_X^n=0$, a homomorphism $\mathit{\Omega}^{n-p}\to i_*\omega_X^n$ is a homomorphism
$\mathit{\Omega}^{n-p}_X\to \omega_X^n$ if and only if it vanishes on $d\J_X^0\wedge\mathit{\Omega}^{n-p-1}$. 
To understand the map $b$ one can for instance use that $(\mathscr{C}^{N,\bullet},\debar)$,
where $\mathscr{C}^{N,\bullet}$ is the sheaf of germs of $(N,\bullet)$-currents in $\B$, 
is a resolution of $\mathit{\Omega}^{N}$ by stalk-wise injective sheaves. In fact, then
\begin{equation*}
\Ext^{\kappa}_{\hol}(\hol_X\otimes \mathit{\Omega}^{n-p}, \mathit{\Omega}^{N}) \simeq
\mathscr{H}^{\kappa}\big(\Hom_{\hol}(\mathit{\Omega}^{n-p}, \Hom_{\hol}(\hol_X, \mathscr{C}^{N,\bullet})),\debar\big)
\end{equation*}
and, since $\F=\hol_X\otimes\F$,
\begin{equation*}
\Ext^{\kappa}_{\hol}(\F, \mathit{\Omega}^{N}) \simeq
\mathscr{H}^{\kappa}\big(\Hom_{\hol}(\F, \Hom_{\hol}(\hol_X, \mathscr{C}^{N,\bullet})),\debar\big)
\end{equation*}
and the map $b$ is induced by restricting homomorphisms defined on $\mathit{\Omega}^{n-p}$ 
to the subsheaf $d\J_X^0\wedge \mathit{\Omega}^{n-p-1}$.
\end{proof}

It follows from Proposition~\ref{BHPprop} that $\omega_X^p$ is coherent, 
which, as mentioned above, also is proved in \cite{Barlet}.
That the vertical map in \eqref{diagramm} is an isomorphism can be seen as a version of/complement
to \cite[Lemma~4]{Barlet}. In fact, in view of \cite[Theorem~1.5]{ANoetherDual}, 
in our terminology that lemma says that $\omega_X^p$, via $i_*$, is isomorphic to the sheaf of germs of
Coleff-Herrera currents $\mu$ in $\B$ of bidegree $(\kappa+p,\kappa)$ such that $\J_X^0\mu=0$ and 
$d\J_X^0\wedge\mu=0$, i.e., such that $\mu=i_*\tau$ for some $(p,0)$-current $\tau$ on $X$;
cf.\ the paragraph after the dimension principle in Section~\ref{PMsektion}.
On the other hand, that the vertical map in \eqref{diagramm} is an isomorphism means that $\omega_X^p$, via $i_*$, is
isomorphic to the sheaf of germs of
Coleff-Herrera currents $\mu$ in $\B$ of bidegree $(\kappa+p,\kappa)$ such that $\J_X^{n-p}\wedge\mu=0$. 

That the vertical map in \eqref{diagramm} is an isomorphism also implies that the map
\begin{equation}\label{snobb}
\omega^p_X\to \Hom_{\hol_X}(\Om_X^{n-p}, \omega_X^n), \quad \mu \mapsto (\varphi \mapsto \mu\wedge\varphi)
\end{equation}
is an isomorphism. In fact, it is clear that \eqref{snobb} is injective, 
and if $\lambda$ is a homomorphism $\Om_X^{n-p}\to\omega_X^n$,
then $i_*\circ\lambda$ is a homomorphism $\Om_X^{n-p}\to \CH^N_X$. Since
the vertical map in \eqref{diagramm} is an isomorphism there is a $\mu\in \omega_X^p$
such that $i_*\circ\lambda (\varphi)=i_*(\mu\wedge\varphi)$ and thus \eqref{snobb} is surjective.
One may construe $\Om_X^{n-p}$ in \eqref{snobb}
also as $\mathit{\Omega}_X^{n-p}$, cf.\ the proof above, and thus we recover \cite[Proposition~3]{Barlet}.
We notice that \cite[Proposition~3]{Barlet} implies that $\omega_X^p$ coincides with the differential
$p$-forms considered by Kersken in \cite{Kersken}; Proposition~\ref{BHPprop} is \cite[Korollar~6.2 (2)]{Kersken}.


\section{Integral operators on an analytic subset}\label{intopsection}
Let $D\subset \C^N$ be a domain (not necessarily pseudoconvex at this point), let $k(\zeta,z)$ be an integrable
$(N,N-1)$-form in $D\times D$, and let $p(\zeta,z)$ be a smooth $(N,N)$-form in $D\times D$. Assume that
$k$ and $p$ satisfy the equation of currents 
\begin{equation}\label{currentKoppel}
\debar k(\zeta,z) = [\Delta^D] - p(\zeta,z)
\end{equation}
in $D\times D$, where $[\Delta^D]$ is the current of integration along the diagonal. Here,
and always when working in a product space, $\debar$, $\partial$, etc., and (bi)degree is with respect to all coordinates.
Applying \eqref{currentKoppel} to test forms $\psi(z)\wedge \varphi(\zeta)$ it is straightforward to verify that for
any compactly supported $(p,q)$-form $\varphi$ in $D$ one has the following Koppelman formula
\begin{equation*}
\varphi(z) = \debar_{z} \int_{D_{\zeta}} k(\zeta,z)\wedge \varphi(\zeta) + 
\int_{D_{\zeta}} k(\zeta,z)\wedge \debar\varphi(\zeta) + \int_{D_{\zeta}} p(\zeta,z)\wedge \varphi(\zeta).
\end{equation*}

In \cite{AIntRepI} Andersson introduced a very flexible method of producing solutions to \eqref{currentKoppel}.
Let $\eta=(\eta_1,\ldots,\eta_N)$ be a holomorphic tuple in $D\times D$ that defines the diagonal and let 
$\Lambda_{\eta}$ be the exterior algebra spanned by $\Lambda^{0,1}T^*(D \times D)$
and the $(1,0)$-forms $d\eta_1,\ldots,d\eta_N$. On forms with values in $\Lambda_{\eta}$ interior multiplication
with $2\pi i \sum\eta_j \partial/\partial \eta_j$, denoted $\delta_{\eta}$, is defined; set $\nabla_{\eta}=\delta_{\eta}-\debar$.

Let $s$ be a smooth $(1,0)$-form in $\Lambda_{\eta}$ such that $|s|\lesssim |\eta|$
and $|\eta|^2\lesssim |\delta_{\eta}s|$ and let $B=\sum_{k=1}^N s\wedge (\debar s)^{k-1}/(\delta_{\eta}s)^k$.
It is proved in \cite{AIntRepI} that then $\nabla_{\eta} B = 1-[\Delta^D]$. Identifying terms of top degree
we see that $\debar B_{N,N-1} = [\Delta^D]$ and we have found a solution to \eqref{currentKoppel}.
For instance, if we take $s=\partial |\zeta-z|^2$ and $\eta=\zeta-z$, then the resulting $B$ is sometimes called
the full Bochner-Martinelli form and the term of top degree is the classical Bochner-Martinelli kernel. 

A smooth section $g(\zeta,z)=g_{0,0}+\cdots +g_{N,N}$ of $\Lambda_{\eta}$, where the subscript means bidegree, defined for $z\in D'\subset D$
and $\zeta\in D$, such that $\nabla_{\eta} g=0$ and $g_{0,0}\restriction_{\Delta^D} = 1$ 
is called a \emph{weight} 
with respect to $z\in D'$. It follows that $\nabla_{\eta} (g\wedge B) = g-[\Delta^D]$ and, identifying 
terms of bidegree $(N,N-1)$, we get that
\begin{equation}\label{gulp}
\debar (g\wedge B)_{N,N-1} = [\Delta^D] - g_{N,N}
\end{equation} 
in $D_{\zeta}\times D'_z$ and hence another solution to \eqref{currentKoppel}.
If $D$ is pseudoconvex and $K$ is a holomorphically convex compact subset, then one can find a weight $g$ with
respect to $z$ in some neighborhood $D'\Subset D$ of $K$ such that $z\mapsto g(\zeta,z)$ is holomorphic 
in $D'$ and $\zeta \mapsto g(\zeta,z)$ has compact support in $D$; see, e.g., \cite[Example~2]{AIntRepII} 
or \cite[Example~5.1]{AS} in case $D=\B$.
We will also have use for weights with values in a certain type of vector bundle, cf.\ \cite{GSS} and \cite{AIntRepII}. Let $V\to D$
be a vector bundle, let $\pi_{\zeta}\colon D_{\zeta}\times D_z\to D_{\zeta}$ and $\pi_z\colon D_{\zeta}\times D_z\to D_{z}$
be the natural projections and set $V_z\otimes V_{\zeta}^*:=\pi_z^* V \otimes \pi^*_{\zeta} V^*$.
Then a weight may take values in $V_z\otimes V_{\zeta}^*\simeq \textrm{Hom}(V_{\zeta},V_z)$; 
it should satisfy the same properties but with
the condition $g_{0,0}\restriction_{\Delta^D} = 1$ replaced by $g_{0,0}\restriction_{\Delta^D} = \textrm{Id}_V$.
If $g$ is a weight with values in $V_z\otimes V_{\zeta}^*$ then \eqref{gulp} holds with $[\Delta^D]$ replaced
by $\textrm{Id}_V\otimes [\Delta^D]$.

\medskip

Let $\tilde{X}$ be an analytic subset of pure codimension $\kappa$ of a neighborhood of $\overline{D}$,
where $D$ now is assumed to be strictly pseudoconvex, and set $X=\tilde{X}\cap D$.
Let \eqref{eq:resol} be a free resolution of $i_*\Om_X^p$ in $D$ and let $U=U(\zeta)$ and $R=R(\zeta)$ be the 
associated currents (for some choice of Hermitian metrics on the $E_k$'s). 
Let 
$E_k^z:=\pi_z^*E_k$ and $E_k^{\zeta}:=\pi_{\zeta}^*E_k$.
One can find Hefer morphisms $H_k^{\ell}=H_k^{\ell}(\zeta,z)$, which depending holomorphically on $(\zeta,z)\in D\times D$ and
are $\textrm{Hom}(E_k^{\zeta},E_\ell^z)$-valued $(k-\ell,0)$-forms such that
\begin{equation*}
H_k^{k}\restriction_{\Delta_D} = \textrm{Id}_{E_k} \quad \textrm{and} \quad
\delta_{\eta} H_k^{\ell} = H_{k-1}^{\ell}f_k - f_{\ell+1}(z)H_k^{\ell+1}, \,\, k>\ell,
\end{equation*}
where $f_k=f_k(\zeta)$; see \cite[Proposition~5.3]{AIntRepII}.
Let $F=F(\zeta)$ be a holomorphic tuple 
such that $X=\{F=0\}$ and set $\chi^{\epsilon}:=\chi(|F|^2/\epsilon)$; we regularize $U$ and $R$
as in Section~\ref{prelim} so that $U^{\epsilon}:=\chi^{\epsilon}u$ and 
\begin{equation*}
R^{\epsilon}:=\textrm{Id}_E - \nabla U^{\epsilon}=(1-\chi^{\epsilon})\textrm{Id}_E + \debar\chi^{\epsilon}\wedge u.
\end{equation*}
We write $U^{\epsilon}_k$ and $R^{\epsilon}_k$ for the parts of $U^{\epsilon}$ and $R^{\epsilon}$ that take 
values in $\textrm{Hom}(E_0,E_k)$ and we define
\begin{equation*}
G^{\epsilon}:=
\sum_{k\geq 0} H^0_kR^{\epsilon}_k + f_1(z)\sum_{k\geq 1}H^1_{k}U^{\epsilon}_k,
\end{equation*}
which one can check is a weight with values in $\textrm{Hom}(E_0^{\zeta},E_0^z)$.

Letting $g$ be any scalar-valued weight with respect to, say, $z\in D'\subset D$
it follows that $G^{\epsilon}\wedge g$ is a $\textrm{Hom}(E_0^{\zeta},E_0^z)$-valued
weight and \eqref{gulp} holds with $g$ replaced by $G^{\epsilon}\wedge g$ and $[\Delta^D]$ replaced by
$\textrm{Id}_{E_0}\otimes [\Delta^D]$.
Let $\nabla^z=\oplus_jf_j(z)-\debar$ and let $\nabla_{\textrm{End}}^z$ be the corresponding endomorphism-valued operator.
Then, recalling that $\nabla_{\textrm{End}}^z R(z)=0$ and noticing that 
$\nabla_{\textrm{End}}^z (G^{\epsilon}\wedge g\wedge B)=-\debar (G^{\epsilon}\wedge g\wedge B)$
since $f(z)\restriction_{E_0}=0$, we get
\begin{equation}\label{gulp2}
-\nabla_{\textrm{End}}^z \big(R(z)\wedge dz\wedge (G^{\epsilon}\wedge g\wedge B)_{N,N-1}\big) \quad \quad \quad
\quad \quad \quad \quad \quad \quad \quad \quad \quad
\end{equation}
\begin{equation*}
\quad \quad \quad \quad \quad \quad \quad \quad \quad
=R(z)\wedge dz\wedge [\Delta^D] - R(z)\wedge dz \wedge (G^{\epsilon}\wedge g)_{N,N}.
\end{equation*}
Notice that $R(z)\wedge [\Delta^D]$ and $R(z)\wedge B$ are well-defined; they are simply tensor products of currents
since $z$ and $\zeta-z$ are independent variables on $D\times D$.
Since $R(z)f_1(z)=0$, \eqref{gulp2} becomes
\begin{equation}\label{tv}
-\nabla_{\textrm{End}}^z \big(R(z)\wedge dz\wedge (HR^{\epsilon}\wedge g\wedge B)_{N,N-1}\big) \quad \quad \quad
\quad \quad \quad \quad \quad \quad \quad \quad \quad
\end{equation}
\begin{equation*}
\quad \quad \quad \quad \quad \quad \quad \quad \quad
=R(z)\wedge dz\wedge [\Delta^D] - R(z)\wedge dz \wedge (HR^{\epsilon}\wedge g)_{N,N},
\end{equation*}
where $HR^{\epsilon}:=\sum_{k\geq 0} H^0_kR^{\epsilon}_k$.
Let $\iota\colon X\simeq\Delta^X\hookrightarrow X\times X$ be the diagonal embedding and let 
$\mathfrak{i}\colon X\times X\hookrightarrow D\times D$ be the inclusion. By Proposition~\ref{fundprop}
we have 
\begin{equation}\label{tv2}
\mathfrak{i}_*\iota_* \omega = R(z)\wedge dz\wedge [\Delta^D],
\end{equation}
where $\omega$ is the $n-p$-structure form corresponding to $R$. 

Consider now the term $(HR^{\epsilon}\wedge g)_{N,N}$. Noticing that $R^{\epsilon}$ contains no $d\eta_j$ we see that 
\begin{equation}\label{sent1}
(HR^{\epsilon}\wedge g)_{N,N}= \tilde{p}(\zeta,z)\wedge R^{\epsilon}\wedge d\eta,
\end{equation}
for some $\textrm{Hom}(E^{\zeta},E_0^z)$-valued form $\tilde{p}(\zeta,z)$ that is smooth for $(\zeta,z)\in D\times D'$; 
if $g$ is chosen holomorphic in $z$ (respectively $\zeta$), then $\tilde{p}$ is holomorphic in $z$ (respectively $\zeta$).
To further reveal the structure of $\tilde{p}$, let $\varepsilon_1,\ldots,\varepsilon_N$ be a frame for an auxiliary 
trivial vector bundle $F\to D\times D$, replace each occurrence of $d\eta_j$ in $H$ and $g$ by $\varepsilon_j$,
and denote the result by $\hat{H}$ and $\hat{g}$. We get
\begin{equation}\label{sent2}
\tilde{p}(\zeta,z)\wedge R^{\epsilon}\wedge \varepsilon =  (\hat{H}R^{\epsilon}\wedge \hat{g})_{N,N} =
\sum_{k\geq 0} \hat{H}^0_kR_k^{\epsilon}\wedge \hat{g}_{N-k,N-k} =
\sum_{k\geq 0} \tilde{p}_k(\zeta,z)\wedge R^{\epsilon}_k\wedge \varepsilon,
\end{equation}
where $\tilde{p}_k(\zeta,z)=\pm \epsilon^*\lrcorner \hat{H}^0_k\wedge \hat{g}_{N-k,N-k}$ is a smooth 
$(0,N-k)$-form in $D\times D'$ with values in $\textrm{Hom}(E_k^{\zeta},E_0^z)$; it is holomorphic in $z$ (or $\zeta$)
if $g$ is chosen so. For degree reasons it follows that 
\begin{equation}\label{bibblan}
R(z)\wedge dz\wedge (HR^{\epsilon}\wedge g)_{N,N} = 
R(z)\wedge dz \wedge \sum_{k\geq 0}\tilde{p}_k(\zeta,z)\wedge R^{\epsilon}_k\wedge d\zeta.
\end{equation}
Since $R(z)\wedge R$ is well-defined (as a tensor product) we may set $\epsilon=0$ in \eqref{bibblan}
and since $R=R_{\kappa}+R_{\kappa+1}+\cdots$ we then sum only over $k\geq \kappa$. In view of 
Proposition~\ref{fundprop} it follows that 
\begin{equation}\label{tv3}
\lim_{\epsilon\to 0} R(z)\wedge dz\wedge (HR^{\epsilon}\wedge g)_{N,N} =
\mathfrak{i}_* \omega(z)\wedge p(\zeta,z),
\end{equation} 
where
\begin{equation*}
p(\zeta,z):=\sum_{k\geq \kappa} \mathfrak{i}^*\tilde{p}_k(\zeta,z)\wedge\omega_{k-\kappa}(\zeta)=
\sum_{k\geq \kappa} \pm \mathfrak{i}^*\big(\varepsilon^*\lrcorner \hat{H}^0_k\wedge \hat{g}_{N-k,N-k}\big)
\wedge\omega_{k-\kappa}(\zeta).
\end{equation*}
We here, and in the following, view $\tilde{p}_k$ not as $(0,N-k)$-form with values in $\textrm{Hom}(E_k^{\zeta},E_0^z)$ but
as a $(p,N-k)$-form with values in $(E_k^{\zeta})^*$; cf.\ the paragraph preceding Proposition~\ref{fundprop}. Thus,
$p(\zeta,z)$ is a scalar valued almost semi-meromorphic current on $X\times X'$ of bidegree $(n,n)$
such that $z\mapsto p(\zeta,z)$ is, or rather, has a natural extension that is smooth in $D$ 
(or holomorphic if $z\mapsto g(\zeta,z)$ is); notice that $p(\zeta,z)$ has degree $p$ in $dz_j$ and degree
$n-p$ in $d\zeta_j$.  

We proceed in an analogous way with the current $R(z)\wedge dz\wedge (HR^{\epsilon}\wedge g\wedge B)_{N,N-1}$
and we get, cf.\ \eqref{bibblan}, that
\begin{equation}\label{bibblan2}
R(z)\wedge dz\wedge (HR^{\epsilon}\wedge g\wedge B)_{N,N-1} = 
R(z)\wedge dz\wedge \sum_{j\geq 0} \tilde{k}_j(\zeta,z)\wedge R^{\epsilon}_j\wedge d\zeta, 
\end{equation}
where $\tilde{k}_j(\zeta,z):=\pm\varepsilon^*\lrcorner \hat{H}^0_j\wedge (\hat{g}\wedge \hat{B})_{N-j,N-j-1}$
is a $(0,N-j-1)$-form with values in $\textrm{Hom}(E_j^{\zeta},E_0^z)$. From Section~\ref{prelim} we know that the limit
as $\epsilon\to 0$ of \eqref{bibblan2} exists and yields a pseudomeromorphic current in $D\times D'$. Moreover,
precisely as in \cite[Lemma 5.2]{AS} one shows that 
\begin{equation*}
\lim_{\epsilon\to 0} R(z)\wedge dz\wedge (HR^{\epsilon}\wedge g\wedge B)_{N,N-1}=
\lim_{\epsilon\to 0}  R(z)\wedge dz\wedge (HR\wedge g\wedge B^{\epsilon})_{N,N-1},
\end{equation*}
where $B^{\epsilon}:=\chi(|\eta|^2/\epsilon) B$, holds
in the sense of current on $(D\setminus X_{sing})\times (D'\setminus X_{sing})$.
In view of \eqref{bibblan2} and Proposition~\ref{fundprop} we thus get
\begin{equation}\label{gurka}
\lim_{\epsilon\to 0} R(z)\wedge dz\wedge (HR^{\epsilon}\wedge g\wedge B)_{N,N-1} =
\lim_{\epsilon\to 0}  \chi(|\eta|^2/\epsilon) \mathfrak{i}_* \omega(z)\wedge k(\zeta,z)
\end{equation}
in $(D\setminus X_{sing})\times (D'\setminus X_{sing})$, where
\begin{equation}\label{k}
k(\zeta,z):=\sum_{j\geq \kappa} \mathfrak{i}^*\tilde{k}_j(\zeta,z)\wedge \omega_{j-\kappa}(\zeta)=
\pm\sum_{j\geq \kappa} \mathfrak{i}^* \big(\varepsilon^*\lrcorner \hat{H}^0_j\wedge (\hat{g}\wedge \hat{B})_{N-j,N-j-1}\big)
\wedge \omega_{j-\kappa}(\zeta).
\end{equation}
As with $\tilde{p}_j(\zeta,z)$, we here and in the following view $\tilde{k}_j(\zeta,z)$ as a $(p,N-j-1)$-form
with values in $(E_j^{\zeta})^*$ so that $k(\zeta,z)$ becomes a scalar valued almost semi-meromorphic 
$(n,n-1)$-current on $X\times X'$;
the degree in $dz_j$ being $p$ and the degree in $d\zeta_j$ being $n-p$.
Recall that $B_{\ell,\ell-1}=s\wedge(\debar s)^{\ell-1}/(\delta_{\eta}s)^{\ell}$ and that $|s|\lesssim |\eta|$ and
$|\eta|^2\lesssim |\delta_{\eta}s|$. Since $\hat{B}_{\ell,\ell-1}$, $\ell=1,\cdots,n$ are the only components of 
$\hat{B}$ that enter in the expression for $k(\zeta,z)$ it follows that $k(\zeta,z)$ is integrable on 
$X_{reg}\times X'_{reg}$. Hence, the limit on the right hand side of \eqref{gurka} is just the locally integrable
form $k(\zeta,z)\wedge\omega(\zeta)$ on $X_{reg}\times X'_{reg}$. From \eqref{tv}, \eqref{tv2}, \eqref{tv3}, and
\eqref{gurka} we thus see that
\begin{equation}\label{tv4}
-\nabla \omega(z)\wedge k(\zeta,z) = \iota_*\omega - \omega(z)\wedge p(\zeta,z)
\end{equation}
as currents on $X_{reg}\times X'_{reg}$, where $\nabla$ here means the endomorphism-version
of $f(z)\restriction_X-\debar$. Since $R$ is $\nabla_{\textrm{End}}$-closed it follows that $\omega(z)$
is $\nabla$-closed and so the left hand side of \eqref{tv4} equals $\omega(z)\wedge \debar k(\zeta,z)$. By 
Lemma~\ref{divlemma} we have thus proved
\begin{proposition}
In $X_{reg}\times X'_{reg}$ we have that $\debar k(\zeta,z) = [\Delta^X] - p(\zeta,z)$ as currents.
\end{proposition}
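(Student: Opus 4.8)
The plan is to start from the current identity \eqref{tv4} on $X_{reg}\times X'_{reg}$ and then divide away the structure form $\omega(z)$ by means of Lemma~\ref{divlemma}. First I would note that on $X_{reg}$ the resolution \eqref{eq:resol} may be taken to be the Koszul-type complex of Example~\ref{ex:smooth}, so that there $\omega=\omega_0$ is a smooth $E_{\kappa}$-valued $(n-p,0)$-form. Since $\omega(z)$ is $\nabla$-closed (because $R$ is $\nabla_{\textrm{End}}$-closed, cf.\ Proposition~\ref{fundprop}) and $k(\zeta,z)$ is scalar valued, so that $f(z)$ annihilates it, Leibniz' rule turns the left-hand side $-\nabla\big(\omega(z)\wedge k(\zeta,z)\big)$ of \eqref{tv4} into $\pm\,\omega(z)\wedge\debar k(\zeta,z)$, with $\debar$ the full $\debar$ on the product.

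Next I would rewrite the term $\iota_*\omega$ in \eqref{tv4}. On $X_{reg}\times X'_{reg}$ the diagonal current $[\Delta^X]$ is integration over $\{z=\zeta\}$, and wedging it with the smooth form $\omega_0(z)$ merely restricts $\omega_0$ to the diagonal; hence $\iota_*\omega=\omega_0(z)\wedge[\Delta^X]=\omega(z)\wedge[\Delta^X]$ there. Feeding this into \eqref{tv4} and collecting terms gives, on $X_{reg}\times X'_{reg}$,
\begin{equation*}
\omega(z)\wedge\big(\debar k(\zeta,z)-[\Delta^X]+p(\zeta,z)\big)=0.
\end{equation*}
Writing $T:=\debar k(\zeta,z)-[\Delta^X]+p(\zeta,z)$, a scalar current of holomorphic degree $p$ in $z$, it remains to deduce $T=0$ from $\omega_0(z)\wedge T=0$.

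This cancellation is the only substantial point, and it is exactly what Lemma~\ref{divlemma} is for. To show $T=0$ I would pair it against an arbitrary smooth compactly supported form; the part that can contribute has holomorphic degree $n-p$ in $z$, so by Lemma~\ref{divlemma}, applied in the $z$-variable with $\zeta$ as a parameter, it is of the form $\omega_0(z)\wedge\phi$ for a smooth $E_{\kappa}^*$-valued form $\phi$, the bracket $\langle\omega_0,\phi\rangle$ being scalar. Because $\omega_0(z)$ is smooth it may be moved onto the current, giving $T.\,(\omega_0\wedge\phi)=\pm\,(\omega_0(z)\wedge T).\,\phi=0$. As the test form was arbitrary, $T=0$, which is precisely the Koppelman identity $\debar k(\zeta,z)=[\Delta^X]-p(\zeta,z)$. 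The main obstacle is making the surjectivity of Lemma~\ref{divlemma} interact correctly with the $E_{\kappa}/E_{\kappa}^*$ pairing and keeping track of the bidegrees and signs; the two preceding steps are routine.
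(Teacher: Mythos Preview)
Your proof follows the paper's approach essentially verbatim: reduce \eqref{tv4} to $\omega(z)\wedge T=0$ using $\nabla\omega=0$, then cancel $\omega_0$ via Lemma~\ref{divlemma}. The paper compresses this into a single sentence, but the mechanism is the same.

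There is one inaccuracy worth flagging. You write that on $X_{reg}$ ``the resolution \eqref{eq:resol} may be taken to be the Koszul-type complex of Example~\ref{ex:smooth}, so that there $\omega=\omega_0$.'' This is not correct: the resolution is fixed globally, the kernels $k$ and $p$ are built from that fixed resolution, and the associated structure form $\omega$ genuinely has higher components $\omega_1,\omega_2,\ldots$ even over $X_{reg}$ (they are smooth there, but in general nonzero). You cannot swap resolutions midstream.

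The fix is immediate and costs nothing. Equation \eqref{tv4} is an identity of $E$-valued currents, $T$ is scalar, and $\omega_k$ takes values in $E_{\kappa+k}$. Hence the $E_{\kappa}$-component of $\omega(z)\wedge T=0$ reads precisely $\omega_0(z)\wedge T=0$, which is what your Lemma~\ref{divlemma} argument needs. With this one-line replacement for the ``change of resolution'' remark, your proof is correct and coincides with the paper's.
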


The following technical lemma corresponds to \cite[Lemma~6.4]{AS}; cf.\ also \cite[Proposition~4.3 (ii)]{RSWSerre}.
It is a statement on $X^{\nu+1}:=X\times\cdots\times X$ ($\nu+1$ factors); 
$X_{z^j}$ refers to the $j^{\text{th}}$ factor and $z^j$
are points on $X_{z^j}$.

\begin{lemma}\label{ASlemma}
Let $\omega$ be any $n-p$-structure form and let
$k_j(z^{j-1},z^j)$, $j=1,\ldots,\nu$, be given by \eqref{k} for possibly different choices of $H$'s, $g$'s, $B$'s, and 
$n-p$-structure forms $\omega$'s. Then
\begin{equation}\label{T}
T:=\omega(z^{\nu})\wedge k_{\nu}(z^{\nu-1},z^{\nu})\wedge k_{\nu-1}(z^{\nu-2},z^{\nu-1})\wedge \cdots
\wedge k_{1}(z^{0},z^{1}) 
\end{equation}   
is an almost semi-meromorphic current on $X^{\nu+1}$. If $h=h(z^j)$ is a generically non-vanishing holomorphic
tuple on $X_{z^j}$ then  $\debar\chi(|h|^2/\epsilon)\wedge T\to 0$ as $\epsilon\to 0$. 
\end{lemma}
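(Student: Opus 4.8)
The plan is to prove the two assertions in turn: that $T$ is almost semi-meromorphic on $X^{\nu+1}$, and then that $\debar\chi(|h|^2/\epsilon)\wedge T\to 0$.

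For the first assertion I would read the structure off the defining formula \eqref{k}. Apart from the Hefer and weight factors $H$ and $g$, which are smooth (and holomorphic in $z$ if $g$ is so chosen), each kernel $k_j(z^{j-1},z^j)$ is a wedge product of the Bochner--Martinelli factor $B$, which is semi-meromorphic with Zariski-singular support contained in the diagonal, and of structure-form components $\omega_\bullet(z^{j-1})$, which are almost semi-meromorphic by Proposition~\ref{fundprop} and Remark~\ref{ASMremark}. Hence, viewed on $X_{z^{j-1}}\times X_{z^j}$, each $k_j$ is a product of almost semi-meromorphic currents and is therefore itself almost semi-meromorphic. Pulling the factors back to $X^{\nu+1}$ — tensoring with $1$ in the remaining variables preserves almost semi-meromorphicity — the current $T$ of \eqref{T} becomes a finite wedge product of almost semi-meromorphic currents, and such products are again almost semi-meromorphic by the results recalled in Section~\ref{PMsektion}. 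In particular $T\in\W(X^{\nu+1})$.

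For the second assertion, since $T\in\W(X^{\nu+1})$ I would apply the identity \eqref{kungsbacka} on $X^{\nu+1}$ to the tuple $h=h(z^j)$; this gives $\lim_{\epsilon\to0}\debar\chi(|h|^2/\epsilon)\wedge T=\ett_{\{h=0\}}\debar T$, so it suffices to prove $\ett_{\{h(z^j)=0\}}\debar T=0$. I would expand $\debar T$ by the Leibniz rule. When $\debar$ falls on a structure-form factor it produces, via the relation $\debar\omega_k=\pm f_{\kappa+k+1}\omega_{k+1}$ coming from $\nabla_{\textrm{End}}R=0$, another structure form, which is almost semi-meromorphic; when $\debar$ falls on a kernel it produces, via the Koppelman relation $\debar k_m=[\Delta^X]-p_m$, the almost semi-meromorphic projection kernel $p_m$ together with a diagonal current $[\Delta^X]$ in the adjacent pair $(z^{m-1},z^m)$. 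Thus $\debar T$ splits into almost semi-meromorphic terms, which lie in $\W(X^{\nu+1})$ and are therefore annihilated by $\ett_{\{h=0\}}$, and terms carrying a factor $[\Delta^X]$. For the latter the decisive point is that $h$ depends only on the single variable $z^j$: the pullback of $h(z^j)$ to any diagonal $\{z^{m-1}=z^m\}$ is again generically non-vanishing, so each such term is an integration current wedged with an almost semi-meromorphic current having the SEP, and $\ett_{\{h(z^j)=0\}}$ kills it as well. Hence $\ett_{\{h(z^j)=0\}}\debar T=0$.

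The main obstacle is to make this computation of $\debar T$ rigorous globally rather than merely on $X_{reg}\times X'_{reg}$, where the structure relations and the identity $\debar k_m=[\Delta^X]-p_m$ are established in the excerpt. This is exactly where the first assertion is used: all of $\debar T$, the $p_m$, the $[\Delta^X]$, and the almost semi-meromorphic pieces are pseudomeromorphic currents, they coincide on the dense regular part, and so any discrepancy is a pseudomeromorphic current supported on the positive-codimension singular locus of $X^{\nu+1}$; one then checks it has too low a bidegree to survive the dimension principle, which promotes the relations to global identities. A secondary point needing care, already in the first assertion, is the variable shared by the consecutive kernels $k_{j+1}(z^j,z^{j+1})$ and $k_j(z^{j-1},z^j)$: one must verify that the wedge product in \eqref{T} remains defined and almost semi-meromorphic even though the Zariski-singular supports of adjacent factors — the two diagonals meeting $X_{z^j}$ and the singular locus of $X_{z^j}$ — may overlap.
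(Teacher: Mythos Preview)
Your first assertion is fine and matches the paper. The second assertion, however, has a genuine gap in the step where you try to promote the Leibniz expansion from $X_{reg}^{\nu+1}$ to all of $X^{\nu+1}$ via the dimension principle.

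The discrepancy you describe is a pseudomeromorphic current supported on the singular locus of $X^{\nu+1}$, which has codimension merely $\geq 1$. But the antiholomorphic degree of $\debar T$ is at least $(n-1)\nu+1$ (coming from the $\omega_0$--piece; higher for $\omega_r$ with $r\geq 1$). For the dimension principle to kill the discrepancy you would need the codimension of the singular locus to exceed $(n-1)\nu+1$, and there is no reason for that. So the Koppelman identity $\debar k_m=[\Delta^X]-p_m$, known only on the regular part, cannot be substituted into the Leibniz expansion globally in the way you propose. A related difficulty is that in the formal Leibniz expansion the term ``$\debar k_m\wedge(\text{rest})$'' is \emph{defined} as $\debar(k_m\wedge\text{rest})\mp k_m\wedge\debar(\text{rest})$; it is not obtained by first computing $\debar k_m$ and then multiplying, so the substitution step is not as innocent as it looks.

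The paper's proof gets the needed codimension by a completely different mechanism: induction on $\nu$. Away from any single diagonal $\{z^{k-1}=z^k\}$ the kernel $k_k$ is smooth times a structure form, so $T$ factors as a tensor product of two shorter $T$'s and the inductive hypothesis applies. This forces the support of $\lim_\epsilon\debar\chi\wedge T$ into the \emph{full} diagonal $\{z^0=\cdots=z^\nu\}$ intersected with $\{h=0\}$, which has codimension $\geq n\nu+1$. That already kills the $\omega_0$--piece (bidegree $(\ast,n\nu-\nu+1)$). For the higher pieces one uses Proposition~\ref{fundprop}(ii): $\omega_{r}=\alpha_r\omega_{r-1}$ with $\alpha_r$ smooth outside $Z_{\kappa+r}$, so the support is further cut down by $\{z^\nu\in Z_{\kappa+r}\}$, gaining one more unit of codimension at each step. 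Your diagonal terms, had you pursued them, would have led you to exactly this induction; but the direct Leibniz/dimension-principle route as written does not go through.
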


\subsection{The integral operators $\K$ and $\Proj$ on $(p,*)$-forms.}\label{KoPsection}
In order to construct the integral operators $\K$ we choose the weight $g$ in the definitions of 
$p(\zeta,z)$ and $k(\zeta,z)$ to be a weight with respect to $z\in D'\Subset D$
such that $\zeta\mapsto g(\zeta,z)$ has compact support in $D$.
Let $\varphi$ be a pseudomeromorphic $(p,q)$-current on $X$. In view of Section~\ref{PMsektion},
$k(\zeta,z)\wedge\varphi(\zeta)$ and $p(\zeta,z)\wedge\varphi(\zeta)$ are well-defined 
pseudomeromorphic currents in $X_{\zeta}\times X_z'$, where $X'=X\cap D'$. Let 
$\pi^z\colon X_{\zeta}\times X_z \to X_z$ be the natural projection and set
\begin{equation}\label{KoP}
\K\varphi(z):=\pi^z_* \, k(\zeta,z)\wedge\varphi(\zeta),\quad
\Proj\varphi(z):=\pi^z_* \, p(\zeta,z)\wedge\varphi(\zeta).
\end{equation}
Since $\zeta\mapsto g(\zeta,z)$ has compact support in $D$ it follows that
$\K\varphi$ and $\Proj\varphi$ are well-defined pseudomeromorphic currents in $X'$. 
Notice that $\Proj\varphi$ has a natural smooth extension to $D'$ since $z\mapsto p(\zeta,z)$ has;
notice also that if $\varphi$ has the SEP then $\K\varphi$ has the SEP in view of Section~\ref{PMsektion}.
Moreover, as in \cite[Lemma~6.1]{AS} one shows that if $\varphi=0$ in a neighborhood
of a point $x\in X'$, or if $\varphi$ is smooth in a neighborhood of $x$ and $x\in X_{reg}'$, 
then $\K\varphi$ is smooth in a neighborhood of $x$.

If $\varphi$ is a pseudomeromorphic $(p,q)$-current with compact support in $X$, then 
one can choose any weight $g$ in the definitions of $k(\zeta,z)$ and $p(\zeta,z)$
and define $\K\varphi$ and $\Proj\varphi$ by \eqref{KoP}; the outcome has the same 
general properties.

The following proposition is proved in the same way as \cite[Proposition~6.3]{AS}.

\begin{proposition}\label{kprop1}
Let $\varphi\in \W^{p,q}(X)$, let $\omega$ be the $n-p$-structure form that enters in the definitions
of $k(\zeta,z)$ and $p(\zeta,z)$, and assume that $\debar (\omega\wedge\varphi)$ has the SEP. 
Let $g$ be a weight with respect to $z\in D'\subset D$.
If either $g$ has compact support in $D_{\zeta}$ or $\varphi$ has compact support in $X$ then
$\varphi = \debar\K\varphi + \K(\debar\varphi) + \Proj\varphi$
as currents on $X'_{reg}$.
\end{proposition}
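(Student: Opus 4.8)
The plan is to obtain the operator identity by wedging the kernel equation $\debar k(\zeta,z)=[\Delta^X]-p(\zeta,z)$, established above on $X_{reg}\times X'_{reg}$, with $\varphi(\zeta)$ and then applying the pushforward $\pi^z_*$ from $X_\zeta\times X'_z$ to $X'_z$. The two support alternatives — $g$ compactly supported in $D_\zeta$, or $\varphi$ of compact support — are precisely what make $\pi^z$ proper on the supports of the relevant products, so that $\pi^z_*$ is defined, commutes with $\debar$, and preserves $\W$ (cf.\ the stability of $\W$ under projections recorded in Section~\ref{PMsektion}). With these in hand the definitions \eqref{KoP} read off $\pi^z_*(k\wedge\varphi)=\K\varphi$, $\pi^z_*(p\wedge\varphi)=\Proj\varphi$ and $\pi^z_*(k\wedge\debar\varphi)=\K(\debar\varphi)$, while $\pi^z_*([\Delta^X]\wedge\varphi)=\varphi$ by integration along the diagonal.

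The real task is therefore to promote the wedged kernel equation from $X_{reg}\times X'_{reg}$ to all of $X\times X'_{reg}$. Since $k$ and $p$ are almost semi-meromorphic and $\varphi\in\W^{p,q}(X)$ (so that $\varphi\otimes 1$ has the SEP on the product), the products $k\wedge\varphi$, $p\wedge\varphi$ and $k\wedge\debar\varphi$ are well-defined pseudomeromorphic currents, and by the very definition of the product the Leibniz rule $\debar k\wedge\varphi=\debar(k\wedge\varphi)-(-1)^{\textrm{deg}\,k}k\wedge\debar\varphi$ holds. I want to establish
\begin{equation*}
\debar(k\wedge\varphi)=[\Delta^X]\wedge\varphi-p\wedge\varphi+(-1)^{\textrm{deg}\,k}k\wedge\debar\varphi
\end{equation*}
as currents on $X\times X'_{reg}$. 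To this end I would compute the left-hand side through the SEP regularization: with $h=h(\zeta)$ cutting out $X_{sing}$ one has $k\wedge\varphi=\lim_{\epsilon\to0}\chi(|h|^2/\epsilon)\,k\wedge\varphi$, so that
\begin{equation*}
\debar(k\wedge\varphi)=\lim_{\epsilon\to0}\debar\chi(|h|^2/\epsilon)\wedge k\wedge\varphi+\lim_{\epsilon\to0}\chi(|h|^2/\epsilon)\,\debar(k\wedge\varphi).
\end{equation*}
On the support of $\chi(|h|^2/\epsilon)$, which lies away from $X_{sing}$, the kernel equation and the Leibniz rule are valid, so the integrand of the second limit may be replaced by $\chi(|h|^2/\epsilon)$ times $[\Delta^X]\wedge\varphi-p\wedge\varphi+(-1)^{\textrm{deg}\,k}k\wedge\debar\varphi$; that limit then reproduces the desired right-hand side, using that $([\Delta^X]-p)\wedge\varphi$ has the SEP and that $k\wedge\debar\varphi$ has the SEP. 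Hence the displayed identity holds exactly when the boundary term $\lim_{\epsilon\to0}\debar\chi(|h|^2/\epsilon)\wedge k\wedge\varphi$ vanishes.

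Showing that this boundary term vanishes is the main obstacle, and it is exactly where the hypothesis enters. By the explicit form \eqref{k}, $k$ is a sum of kernels that are smooth off the diagonal, wedged with the structure form $\omega$, so that $\debar\chi(|h|^2/\epsilon)\wedge k\wedge\varphi$ is, up to smooth factors, governed by $\debar\chi(|h|^2/\epsilon)\wedge\omega\wedge\varphi$. By Lemma~\ref{obslemma}(i) and the computation \eqref{kungsbacka}, the assumption that $\debar(\omega\wedge\varphi)$ has the SEP is equivalent to $\lim_{\epsilon\to0}\debar\chi(|h|^2/\epsilon)\wedge\omega\wedge\varphi=0$ for every generically non-vanishing $h$, which is precisely what is needed; splitting by bidegree and bundle degree, the vanishing passes to each component $\omega_{j-\kappa}$ occurring in $k$. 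It is also this same hypothesis, via $\debar\omega=f\restriction_X\omega$, that guarantees $\omega\wedge\debar\varphi$, and hence $k\wedge\debar\varphi$, has the SEP, as used above. Here the restriction of the conclusion to $X'_{reg}$ is essential: for $z\in X'_{reg}$ the diagonal point $\zeta=z$ lies in $X_{reg}$, away from $\{h=0\}=X_{sing}$ where the regularization concentrates, so the diagonal singularity of $k$ never interacts with the boundary term and the reduction to smooth factors is legitimate.

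With the boundary term gone, the displayed identity holds on $X\times X'_{reg}$. Applying $\pi^z_*$, commuting it past $\debar$ by the properness afforded by the support hypothesis, and identifying the four pushforwards as in the first paragraph, one obtains $\debar\K\varphi=\varphi-\Proj\varphi-\K(\debar\varphi)$ on $X'_{reg}$, up to the bookkeeping of signs, which is the asserted Koppelman formula.
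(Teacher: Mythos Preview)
Your proposal is correct and follows essentially the same route as the paper indicates (the paper refers to \cite[Proposition~6.3]{AS} rather than spelling out the argument). You have correctly identified the crux: the kernel equation $\debar k=[\Delta^X]-p$ holds only on $X_{reg}\times X'_{reg}$, and the passage to $X\times X'_{reg}$ is obstructed precisely by the boundary term $\lim_{\epsilon\to 0}\debar\chi(|h(\zeta)|^2/\epsilon)\wedge k\wedge\varphi$, whose vanishing is equivalent (via Lemma~\ref{obslemma}) to the hypothesis that $\debar(\omega\wedge\varphi)$ has the SEP, once one uses the structure $k=\sum_j\tilde k_j\wedge\omega_{j-\kappa}$ and the fact that on $X\times X'_{reg}$ the set $\{h(\zeta)=0\}$ is disjoint from the diagonal so the $\tilde k_j$ are smooth on the support of the boundary term. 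Your observation that the hypothesis also forces $\debar\varphi\in\W$ (hence $k\wedge\debar\varphi\in\W$) is exactly the remark the paper makes immediately after the proposition.

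One small point worth tightening: when you say the boundary term is ``up to smooth factors, governed by $\debar\chi\wedge\omega\wedge\varphi$'', the factors $\tilde k_j(\zeta,z)$ depend on both variables and are not globally smooth. The clean way to phrase it is that for any test form with compact support in $X\times X'_{reg}$, the intersection of its support with $\{h(\zeta)=0\}\times X'_{reg}$ is a compact set disjoint from $\Delta^X$, so for small $\epsilon$ the support of $\debar\chi(|h|^2/\epsilon)$ meets the test form only where $\tilde k_j$ is uniformly smooth; then $\debar\chi\wedge\omega_{j-\kappa}\wedge\varphi\to 0$ in $X_\zeta$ implies the product with the smooth $\tilde k_j$ tends to $0$ on $X\times X'_{reg}$. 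You essentially say this, but making the compact-support localization explicit removes any ambiguity.
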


Notice that the condition that $\debar(\omega\wedge\varphi)$ has the SEP implies that $\debar\varphi$
has the SEP. In fact, from Section~\ref{PMsektion} we know that 
$\omega\wedge\varphi$ has the SEP and so, in view of Lemma~\ref{obslemma},
$\debar(\omega\wedge\varphi)$ has the SEP if and only if 
$\debar\chi(|h|^2/\epsilon)\wedge\omega\wedge\varphi\to 0$ for all generically non-vanishing $h$. In particular,
$\debar\chi(|h|^2/\epsilon)\wedge\omega_0\wedge\varphi\to 0$ and so, by Lemma~\ref{divlemma},
$\debar\chi(|h|^2/\epsilon)\wedge\varphi\to 0$. By Lemma~\ref{obslemma} again we conclude that $\debar\varphi$ has the SEP.

From Proposition~\ref{kprop1} it is easy to prove the following residue criterion
for a meromorphic $p$-form to be strongly holomorphic. Recall the operator $\nabla=\oplus_jf_j - \debar$.
attached to \eqref{eq:resol}.

\begin{theorem}\label{rescrit}
Let $X$ be a pure $n$-dimensional analytic subset of some neighborhood of the closure of a strictly pseudoconvex domain
$D\in\C^N$ and let $\omega$ be an
$n-p$-structure form on $X\cap D$ corresponding to a resolution \eqref{eq:resol} of $\Om_X^p$. 
Then a meromorphic $p$-form $\varphi$ on $X\cap D$
is strongly holomorphic if and only if 
\begin{equation}\label{rescrit1}
\nabla(\omega\wedge\varphi)=0.
\end{equation}
Moreover, if \eqref{rescrit1} holds, $D'\Subset D$, and $\Proj$ is an integral operator
constructed using $\omega$ and a weight $g(\zeta,z)$ such that
$z\mapsto g(\zeta,z)$ is holomorphic in $D'$ and $\zeta\mapsto g(\zeta,z)$ has compact support in $D$,
then $\Proj\varphi$ is a holomorphic extension of $\varphi_{\restriction_{X\cap D'}}$ to $D'$.
\end{theorem}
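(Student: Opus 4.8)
The plan is to show that, thanks to $\nabla\omega=0$, the residue condition $\nabla(\omega\wedge\varphi)=0$ is equivalent to the vanishing $\omega\wedge\debar\varphi=0$, and then to feed this vanishing into the Koppelman formula of Proposition~\ref{kprop1} so that the only surviving term is the holomorphic projection $\Proj\varphi$. First I would record that $\nabla\omega=0$: this holds because $R$ is $\nabla_{\textrm{End}}$-closed and $R\wedge dz=i_*\omega$ by Proposition~\ref{fundprop}, which componentwise reads $\debar\omega_{k-1}=\pm f_{\kappa+k}\omega_k$. Since $\varphi$ is a scalar $(p,0)$-current we have $f\varphi=0$, so Leibniz' rule for the product of the almost semi-meromorphic current $\omega$ with the pseudomeromorphic current $\varphi$, as defined in Section~\ref{PMsektion}, gives
\begin{equation*}
\nabla(\omega\wedge\varphi)=(\nabla\omega)\wedge\varphi\pm\omega\wedge\debar\varphi=\pm\,\omega\wedge\debar\varphi.
\end{equation*}
Reading off the component that takes values in $E_{\kappa+k}$, I would conclude that $\nabla(\omega\wedge\varphi)=0$ if and only if $\omega_k\wedge\debar\varphi=0$ for every $k$, that is, if and only if $\omega\wedge\debar\varphi=0$.

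This equivalence immediately settles the ``only if'' direction. If $\varphi$ is strongly holomorphic it is the principal value current of the restriction of a holomorphic form in ambient space, so $i_*\varphi=\pm\tilde\varphi\wedge[X]$ with $\tilde\varphi$ holomorphic; since $[X]$ is $\debar$-closed this current is $\debar$-closed, whence $\debar\varphi=0$ and therefore $\nabla(\omega\wedge\varphi)=\pm\omega\wedge\debar\varphi=0$.

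For the converse I would assume $\nabla(\omega\wedge\varphi)=0$ and verify the hypotheses of Proposition~\ref{kprop1}. From $\nabla(\omega\wedge\varphi)=0$ we get $\debar(\omega\wedge\varphi)=f(\omega\wedge\varphi)$; as $\omega\wedge\varphi$ has the SEP (a product of an almost semi-meromorphic current with a current having the SEP) and $f$ is multiplication by holomorphic functions, $\debar(\omega\wedge\varphi)$ has the SEP as well. Choosing $g$ with $z\mapsto g(\zeta,z)$ holomorphic in $D'$ and $\zeta\mapsto g(\zeta,z)$ compactly supported, Proposition~\ref{kprop1} yields, on $X'_{reg}$,
\begin{equation*}
\varphi=\debar\K\varphi+\K(\debar\varphi)+\Proj\varphi=\K(\debar\varphi)+\Proj\varphi,
\end{equation*}
where $\debar\K\varphi=0$ for degree reasons since $\K$ lowers the antiholomorphic degree and $\varphi$ has bidegree $(p,0)$. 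The decisive point is that the kernel $k(\zeta,z)=\sum_{j\geq\kappa}\mathfrak{i}^*\tilde k_j(\zeta,z)\wedge\omega_{j-\kappa}(\zeta)$ is built from the very structure form $\omega$, with the $\tilde k_j$ smooth; hence $k(\zeta,z)\wedge\debar\varphi(\zeta)=\sum_{j}\tilde k_j\wedge\bigl(\omega_{j-\kappa}\wedge\debar\varphi\bigr)=0$ because each factor $\omega_{j-\kappa}\wedge\debar\varphi$ vanishes by the equivalence above. Thus $\K(\debar\varphi)=0$ and $\varphi=\Proj\varphi$ on $X'_{reg}$. Since $z\mapsto p(\zeta,z)$ is holomorphic in $D'$ for this choice of $g$, $\Proj\varphi$ is a holomorphic $p$-form on $D'$ whose pullback to $X'_{reg}$ equals $\varphi$, which shows $\varphi$ is strongly holomorphic and at the same time proves the ``Moreover'' statement.

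The step I expect to require the most care is confirming that the vanishing $\omega\wedge\debar\varphi=0$ genuinely propagates to $\K(\debar\varphi)=0$ at the level of currents: one must check that the products $\tilde k_j\wedge\omega_{j-\kappa}\wedge\debar\varphi$ can be computed associatively and that $\omega_{j-\kappa}(\zeta)\wedge\debar\varphi(\zeta)$ is precisely the almost semi-meromorphic-times-pseudomeromorphic product shown to vanish, for which I would appeal to Section~\ref{PMsektion} and to the smoothness of $\tilde k_j$. A secondary bookkeeping issue is the tracking of signs and $E$-bundle degrees in the componentwise reading of $\nabla(\omega\wedge\varphi)=0$, which rests on $\nabla\omega=0$ and the relations $\omega_k=\alpha_k\omega_{k-1}$ of Proposition~\ref{fundprop}.
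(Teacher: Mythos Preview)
Your argument reaches the right conclusion, but the paper's route is both shorter and avoids the two delicate points you yourself flag.

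The key difference: once you have established (correctly) that $\debar(\omega\wedge\varphi)=f(\omega\wedge\varphi)$ has the SEP, the paper does not bother with the identity $\nabla(\omega\wedge\varphi)=\pm\,\omega\wedge\debar\varphi$ or with propagating $\omega\wedge\debar\varphi=0$ through the kernel. Instead it observes directly that $\debar\varphi=0$. Indeed, the paragraph after Proposition~\ref{kprop1} shows that $\debar(\omega\wedge\varphi)\in\W$ forces $\debar\varphi\in\W$; since $\varphi$ is meromorphic it is holomorphic on a dense open set, so $\debar\varphi$ vanishes there and hence everywhere by the SEP. Then $\K(\debar\varphi)=0$ is trivial, and $\varphi=\Proj\varphi$ on $X'_{reg}$; both sides have the SEP, so the equality extends to $X'$. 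You already had all the ingredients for this at your step ``$\debar(\omega\wedge\varphi)$ has the SEP''.

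On your Leibniz step: the claimed identity $\nabla(\omega\wedge\varphi)=\pm\,\omega\wedge\debar\varphi$ is not a general fact. The product $\debar\omega\wedge\varphi$ in Section~\ref{PMsektion} is \emph{defined} by Leibniz, and there is no a priori reason it coincides with the direct product $(f\omega)\wedge\varphi$ of the almost semi-meromorphic current $f\omega$ with $\varphi$; the two can differ by a term supported on $ZSS(\omega)$. What is true is that they agree on $X\setminus ZSS(\omega)$ (where $\omega$ is smooth and honest Leibniz holds), and since the product $\omega\wedge\debar\varphi$ is by definition the standard extension from there, the implication $\nabla(\omega\wedge\varphi)=0\Rightarrow\omega\wedge\debar\varphi=0$ survives. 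So your ``if and only if'' is too strong, but the direction you actually use is fine. Your propagation to $\K(\debar\varphi)=0$ then also works, since outside $ZSS(k)$ both $\tilde k_j$ and $\omega_{j-\kappa}$ are smooth and the product there reduces to the ordinary one; but as noted above, the paper sidesteps this entirely by getting $\debar\varphi=0$.
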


\begin{proof}
Notice first that if $\varphi$ is strongly holomorphic then \eqref{rescrit1} holds since $\nabla \omega=0$.

For the converse, notice that $\omega\wedge\varphi$ has the SEP so that 
$\chi(|h|^2/\epsilon)\omega\wedge\varphi\to \omega\wedge\varphi$ for all generically non-vanishing $h$.
Hence, if \eqref{rescrit1} holds, we get 
\begin{equation*}
0=\nabla(\omega\wedge\varphi) = \lim_{\epsilon\to 0} \nabla \big(\chi(|h|^2/\epsilon)\omega\wedge\varphi\big) =
-\lim_{\epsilon\to 0}\debar \chi(|h|^2/\epsilon)\wedge \omega\wedge\varphi.
\end{equation*}
for all such $h$.
From Lemma~\ref{obslemma} it thus follows that $\debar(\omega\wedge\varphi)$ has the SEP.
From the paragraph after Proposition~\ref{kprop1} it then follows that $\debar\varphi$ has the SEP and since $\varphi$
is holomorphic generically we see that $\debar\varphi=0$. By Proposition~\ref{kprop1} we get that
$\varphi=\Proj\varphi$ on $X_{reg}\cap D'$. However, both $\varphi$ and $\Proj\varphi$ have the SEP so this holds on $X\cap D'$. 
\end{proof}


Theorem~\ref{rescrit} gives the following geometric criterion for a meromorphic $p$-form to be strongly holomorphic.

\begin{proposition}\label{extensionprop}
Let $X$ be a pure $n$-dimensional reduced complex space
and let $\varphi$ be a meromorphic $p$-form on $X$ with pole set $P_{\varphi}\subset X$. Suppose that
(i) $\textrm{codim}_X P_{\varphi} \geq 2$, and that (ii) 
$\textrm{codim}_X S_{n-k}(\Om_X^p)\cap P_{\varphi} \geq k +2$ for $k\geq 1$.
Then $\varphi$ is strongly holomorphic.
\end{proposition}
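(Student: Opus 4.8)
The plan is to reduce everything to the residue criterion of Theorem~\ref{rescrit} and then annihilate an obstruction current componentwise by means of the dimension principle. Since strong holomorphicity is a local condition, I would work near a fixed point of $X$, choose an embedding into a small strictly pseudoconvex domain $D\subset\C^N$, pick a resolution \eqref{eq:resol} of $\Om_X^p$, and let $\omega=\omega_0+\cdots+\omega_{n-1}$ be an associated $n-p$-structure form as in Proposition~\ref{fundprop}. By Theorem~\ref{rescrit} it then suffices to show that $\nabla(\omega\wedge\varphi)=0$. Since $R$ is $\nabla_{\textrm{End}}$-closed we have $\debar\omega=f\restriction_X\omega$, i.e.\ $\nabla\omega=0$, and since $\varphi$ is scalar valued, Leibniz' rule (in the sense of Section~\ref{PMsektion}) gives
\begin{equation*}
\nabla(\omega\wedge\varphi)=(\nabla\omega)\wedge\varphi\pm\omega\wedge\debar\varphi=\pm\,\omega\wedge\debar\varphi .
\end{equation*}
Thus the entire problem becomes to prove that $\omega\wedge\debar\varphi=0$, and it is enough to treat each component $\omega_k\wedge\debar\varphi$ separately.

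Here $\debar\varphi$ is a pseudomeromorphic current of bidegree $(p,1)$ supported on the pole set $P_{\varphi}$, so each $\omega_k\wedge\debar\varphi$ is a well-defined pseudomeromorphic current of bidegree $(n,k+1)$ supported in $P_{\varphi}$. I would prove $\omega_k\wedge\debar\varphi=0$ by induction on $k$. For $k=0$ the current $\omega_0\wedge\debar\varphi$ has bidegree $(n,1)$ and support in $P_{\varphi}$, which by hypothesis (i) has codimension $\geq 2>1$; the dimension principle gives $\omega_0\wedge\debar\varphi=0$. For the inductive step, recall from Proposition~\ref{fundprop}(ii) that $\omega_k=\alpha_{k\restriction_X}\omega_{k-1}$, where $\alpha_k$ is almost semi-meromorphic and smooth outside $Z_{\kappa+k}=S_{n-k}(\Om_X^p)$. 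On the open set $X\setminus S_{n-k}(\Om_X^p)$ the factor $\alpha_k$ is a smooth form, so there the product is genuinely associative and $\omega_k\wedge\debar\varphi=\alpha_k\wedge(\omega_{k-1}\wedge\debar\varphi)=0$ by the induction hypothesis. Hence $\omega_k\wedge\debar\varphi$ is supported in $S_{n-k}(\Om_X^p)\cap P_{\varphi}$, which by hypothesis (ii) has codimension $\geq k+2>k+1$; since its bidegree is $(n,k+1)$, the dimension principle again forces $\omega_k\wedge\debar\varphi=0$. Summing over $k$ yields $\omega\wedge\debar\varphi=0$, hence $\nabla(\omega\wedge\varphi)=0$, and Theorem~\ref{rescrit} shows that $\varphi$ is strongly holomorphic (with $\Proj\varphi$ providing the explicit extension).

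The step I expect to be delicate is precisely the one where associativity is invoked: products of pseudomeromorphic currents are not associative in general, so one cannot conclude $\omega_k\wedge\debar\varphi=0$ directly from $\omega_{k-1}\wedge\debar\varphi=0$. The induction is arranged to circumvent this, using the vanishing of $\omega_{k-1}\wedge\debar\varphi$ only on the locus where $\alpha_k$ is \emph{smooth} (so that factoring out $\alpha_k$ is legitimate); this is exactly enough to confine the support of $\omega_k\wedge\debar\varphi$ to the smaller variety $S_{n-k}(\Om_X^p)\cap P_{\varphi}$, and the codimension bound (ii) is calibrated so that the dimension principle then applies. Some routine care is also needed to verify that $\debar\varphi$ is supported on $P_{\varphi}$ and that each $\omega_k\wedge\debar\varphi$ is well defined in the sense of Section~\ref{PMsektion}, but both are immediate from the almost semi-meromorphicity of the $\omega_k$.
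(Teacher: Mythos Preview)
Your approach is essentially the same as the paper's: localize, invoke Theorem~\ref{rescrit}, and kill the obstruction componentwise via the induction on $k$ combining Proposition~\ref{fundprop}(ii) with the dimension principle. The inductive step---using smoothness of $\alpha_k$ off $Z_{\kappa+k}=S_{n-k}(\Om_X^p)$ to push the support into $S_{n-k}(\Om_X^p)\cap P_\varphi$---is exactly what the paper does.

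The one place where your write-up is slightly less clean than the paper's is the reduction step. You write $\nabla(\omega\wedge\varphi)=(\nabla\omega)\wedge\varphi\pm\omega\wedge\debar\varphi$ and then assert $(\nabla\omega)\wedge\varphi=0$ because $\nabla\omega=0$. But in Section~\ref{PMsektion} the symbol $\debar\omega\wedge\varphi$ is \emph{defined} by Leibniz, not as the product of the current $\debar\omega$ with $\varphi$; so the implication ``$\debar\omega=f\omega$ as currents $\Rightarrow$ $\debar\omega\wedge\varphi=(f\omega)\wedge\varphi$'' is not automatic and needs a word of justification (e.g.\ that $\debar\omega=f\omega$ is itself almost semi-meromorphic, so the Leibniz-defined $\debar\omega\wedge\varphi$ has no mass on $ZSS(\omega)$ and must coincide with the almost semi-meromorphic product $(f\omega)\wedge\varphi$). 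The paper avoids this by writing $\omega\wedge\varphi=\lim_\epsilon\chi(|h|^2/\epsilon)\varphi\wedge\omega$ with $\{h=0\}\supset P_\varphi$ and computing directly that $\nabla(\omega\wedge\varphi)=\pm\lim_\epsilon\debar\chi(|h|^2/\epsilon)\wedge\varphi\wedge\omega$, which is transparently supported in $P_\varphi$ and then handled by the same induction you describe.
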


\begin{proof}
Since $\Om_X^p$ is torsion free a strongly holomorphic extension of $\varphi$, if such exist, is unique.
Therefore the statement of the proposition is local and we may assume that $X$ is an analytic subset of a 
neighborhood of $\overline{\B}\subset\C^N$.
Let $\omega=\omega_0+\cdots$ be an $n-p$-structure form on $X\cap \B$. 
By Theorem~\ref{rescrit} we need to show that $\nabla(\omega\wedge\varphi)=0$. Since $\omega$ and $\varphi$ are 
almost semi-meromorphic we have 
$\pm\omega\wedge\varphi=\varphi\wedge\omega=\lim_{\epsilon\to 0}\chi(|h|^2/\epsilon) \varphi\wedge\omega$,
where $h$ is a generically non-vanishing holomorphic function such that $\{h=0\}\supset P_{\varphi}$. Thus, since
$\nabla\omega=0$, we see that 
$\nabla(\omega\wedge\varphi)=\pm\lim_{\epsilon\to 0} \debar\chi(|h|^2/\epsilon)\wedge \varphi\wedge\omega$
and so we need to show that
\begin{equation}\label{thai}
\lim_{\epsilon\to 0} \debar\chi(|h|^2/\epsilon)\wedge\varphi\wedge \omega_{\ell} =0
\end{equation}
for $\ell=0,1,2,\ldots$. 
For $\ell=0$ the left hand side of \eqref{thai} is a pseudomeromorphic $(n,1)$-current on $X$
with support contained in $P_{\varphi}$; hence it vanishes by the dimension principle and assumption (i).

Recall from Section~\ref{AWcurrsektion} the sets $Z_k$ associated
with a resolution \eqref{eq:resol} of $\Om^p_X$ and
that $S_{N-k}(\Om^p_X)=Z_{k}$. Assumption (ii) is thus equivalent to 
$\textrm{codim}\, Z_k\cap P_{\varphi}\geq k+2$ for $k\geq N-n+1$. Now, assume that \eqref{thai} holds for $\ell=m$.
Since, by Proposition~\ref{fundprop} (ii),
$\omega_{m+1}$ is a smooth form times $\omega_m$ outside of $Z_{m+1}$
it follows that for $\ell=m+1$ the left hand side of \eqref{thai} is a pseudomeromorphic
$(n,m+2)$-current with support contained in $Z_{m+1}\cap P_{\varphi}$. Thus,
\eqref{thai} holds for $\ell=m+1$ by assumption (ii) and the dimension principle.
\end{proof}

\subsection{The integral operators $\check{\K}$ and $\check{\Proj}$ on $(n-p,*)$-forms.}
A general integral operator $\check{\K}$ is constructed by choosing the weight $g$
in the definitions of $k(\zeta,z)$ and $p(\zeta,z)$ to be a weight
with respect to $\zeta\in D'\Subset D$ such that $z\mapsto g(\zeta,z)$ has compact support 
in $D$. Let $\psi$ be a pseudomeromorphic $(n-p,q)$-current on $X$. In the same way as above
$k(\zeta,z)\wedge\varphi(z)$ and $p(\zeta,z)\wedge\varphi(z)$ are well-defined 
pseudomeromorphic currents in $X_{\zeta}'\times X_z$ and we set
\begin{equation*}
\check{\K} \psi (\zeta) := \pi^{\zeta}_* \, k(\zeta,z)\wedge\varphi(z), \quad
\check{\Proj} \psi (\zeta) := \pi^{\zeta}_* \, p(\zeta,z)\wedge\varphi(z),
\end{equation*}
which become pseudomeromorphic currents on $X'$. Notice that $\check{\Proj} \psi$
has the SEP if $\psi$ has, and moreover,
is of the form $\sum_{\ell\geq 0} A_{\ell}(\zeta)\wedge \omega_{\ell}(\zeta)$,
where $A_{\ell}$ is a smooth form with values in $E_{\kappa+\ell}^*$; if $g$ is chosen 
so that $\zeta\mapsto g(\zeta,z)$ is holomorphic then the $A_{\ell}$ are holomorphic.
The current $\check{\K}\psi$ has the SEP if $\psi$ has, and it has the
form $\sum_{\ell\geq 0} C_{\ell}(\zeta)\wedge\omega_{\ell}(\zeta)$,
where the $C_{\ell}$ take values in $E_{\kappa+\ell}^*$ and are: i) smooth close to $x\in X'$ if $\psi=0$  
close to $x$, and ii) smooth close to $x\in X'_{reg}$ if $\psi$ is smooth close to $x$.

As for $\K$ and $\Proj$, if $\psi$ happens to have compact support in $X$ then any weight $g$ 
may be used to define $\check{\K} \psi$ and $\check{\Proj} \psi$.

\begin{proposition}\label{kprop2}
Let $\psi\in \W^{n-p,q}(X)$, assume that $\debar\psi\in\W^{n-p,q+1}(X)$, and let $g$ be a weight
with respect to $\zeta\in D'\subset D$. If either $g$ has compact support in $D_z$ or $\psi$ has 
compact support in $X$ then $\psi=\debar\check{\K}\psi + \check{\K}(\debar\psi) + \check{\Proj}\psi$
as currents on $X'_{reg}$.
\end{proposition}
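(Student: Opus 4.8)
The plan is to follow the proof of Proposition~\ref{kprop1}, equivalently \cite[Proposition~6.3]{AS}, with the roles of $\zeta$ and $z$ interchanged. The kernels $k(\zeta,z)$ and $p(\zeta,z)$ are literally the same as before; the only difference is that $g$ is now a weight with respect to $\zeta$, so that $\check{\K}$ and $\check{\Proj}$ push forward in the $z$-variable via $\pi^{\zeta}$. The starting point is the kernel Koppelman equation $\debar k(\zeta,z)=[\Delta^X]-p(\zeta,z)$ on $X_{reg}\times X'_{reg}$ established above. Multiplying this identity by $\psi(z)$ and applying $\pi^{\zeta}_*$ should, formally, produce the three terms of the asserted formula. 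First I would observe that the compact support hypothesis (either $z\mapsto g(\zeta,z)$ has compact support in $D_z$, or $\psi$ has compact support in $X$) makes $\pi^{\zeta}$ proper on the relevant supports, so that $\debar\check{\K}\psi=\debar\pi^{\zeta}_*(k\wedge\psi)=\pi^{\zeta}_*\debar(k\wedge\psi)$.

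By the Leibniz rule for the product of the almost semi-meromorphic current $k$ with the pseudomeromorphic current $\psi$ (Section~\ref{PMsektion}), the right-hand side splits as $\pi^{\zeta}_*(\debar k\wedge\psi)$ plus a term $\pm\pi^{\zeta}_*(k\wedge\debar\psi)=\pm\check{\K}(\debar\psi)$; here the hypothesis $\debar\psi\in\W^{n-p,q+1}(X)$ ensures $\check{\K}(\debar\psi)$ is the correctly-defined current. Using $\debar k=[\Delta^X]-p$, the first piece should yield the diagonal contribution $\pi^{\zeta}_*([\Delta^X]\wedge\psi)=\pm\psi$ together with $-\pi^{\zeta}_*(p\wedge\psi)=-\check{\Proj}\psi$. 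Collecting terms with the standard sign conventions for fibre integration then gives $\psi=\debar\check{\K}\psi+\check{\K}(\debar\psi)+\check{\Proj}\psi$ on $X'_{reg}$. Since the conclusion is only claimed on $X'_{reg}$, I would fix a regular point $\zeta_0\in X'_{reg}$, at which the diagonal point $(\zeta_0,\zeta_0)$ is a smooth point of $X\times X'$.

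The main obstacle is that $\psi$ is merely a current with the SEP and that the fibre $X_z$ over $\zeta_0$ passes through $X_{sing}$, so the above manipulations must be justified by regularization rather than taken at face value. The plan is to introduce a cut-off $\chi(|h|^2/\delta)$, with $h$ cutting out $X_{sing}$ in the fibre, reducing the computation to a classical Koppelman formula on the manifold $X_{reg}$, where $\debar k=[\Delta^X]-p$ holds honestly and the diagonal term is transparent. Since $\psi\in\W^{n-p,q}(X)$ one has $\chi(|h|^2/\delta)\psi\to\psi$, and near $\zeta_0$ the diagonal contribution is $\chi(|h(\zeta)|^2/\delta)\psi(\zeta)=\psi(\zeta)$ because $h(\zeta_0)\neq 0$. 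The delicate point is the convergence of the $\debar$-regularized boundary terms as $\delta\to 0$: this is exactly where $\debar\psi\in\W(X)$ is used, since by Lemma~\ref{obslemma} it guarantees $\debar\chi(|h|^2/\delta)\wedge\psi\to 0$, so that no spurious mass is created on $X_{sing}$ when $\debar$ is passed through the cut-off. Finally, the structure-form factors $\omega_{\ell}(\zeta)$ occurring in $k$ and $p$ (which make $\check{\K}\psi$ and $\check{\Proj}\psi$ of the stated form $\sum_{\ell}C_{\ell}(\zeta)\wedge\omega_{\ell}(\zeta)$) are controlled in these limits by Lemma~\ref{ASlemma} together with Proposition~\ref{fundprop}; taking $\delta\to 0$ then produces the three correctly-defined currents and completes the proof on $X'_{reg}$.
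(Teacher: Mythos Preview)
Your approach is correct and is precisely the argument the paper has in mind: the paper defers to \cite[Proposition~3.1]{RSWSerre}, which is exactly the proof of \cite[Proposition~6.3]{AS} (i.e.\ Proposition~\ref{kprop1} here) with the roles of $\zeta$ and $z$ swapped, regularizing with $\chi(|h(z)|^2/\delta)$ for $h$ cutting out $X_{sing}$ and using that $\debar\psi\in\W(X)$ kills the boundary term via Lemma~\ref{obslemma}.

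One small remark: your final appeal to Lemma~\ref{ASlemma} to ``control'' the factors $\omega_\ell(\zeta)$ is superfluous here and slightly obscures the point. Since you have fixed $\zeta_0\in X'_{reg}$ and the structure form sits in the $\zeta$-variable, $\omega(\zeta)$ is smooth near $\zeta_0$; the only singularities you must handle in the $z$-integration are the integrable diagonal singularity of $B$ (which lies at the regular point $(\zeta_0,\zeta_0)$ and is unaffected by the cutoff for small $\delta$) and the singular set of $\psi$ in $z$. This is exactly why the hypothesis here is merely $\debar\psi\in\W(X)$, whereas Proposition~\ref{kprop1} requires the stronger $\debar(\omega\wedge\varphi)\in\W(X)$: there one integrates over $\zeta$ and must genuinely contend with the $\omega(\zeta)$-singularities on $X_{sing}$. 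So Lemma~\ref{ASlemma} is not needed for Proposition~\ref{kprop2}; it enters later when one shows that $\debar$ of a $\Bsheaf$-section has the SEP.
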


This is proved in the same way as \cite[Proposition~3.1]{RSWSerre}.


\section{The sheaves $\A_X^{p,q}$ and $\Bsheaf_X^{n-p,q}$}\label{AoBsection}
\subsection{The sheaves $\A_X^{p,\bullet}$}\label{Asubsection}
Let $X$ be a reduced complex space of pure dimension $n$.
Following \cite[Definition~7.1]{AS} we say that a $(p,q)$-current $\varphi$ on $X$ on an open subset $U\subset X$
is a section of $\A_X^{p,q}$ over $U$ if for every $x\in U$ the germ $\varphi_x$ can be written as a finite sum of terms
\begin{equation}\label{A}
\xi_{\nu} \wedge \K_{\nu} (\cdots \xi_2\wedge \K_2 (\xi_1\wedge\K_1(\xi_0)) \cdots ),
\end{equation}
where $\xi_0$ is a smooth $(p,*)$-form and the $\xi_j$, $j\geq 1$, are smooth $(0,*)$-forms such that 
$\xi_j$ has support where $z\mapsto k_j(\zeta,z)$ is defined.

\begin{proposition}\label{Aprop1}
The sheaf $\A_X^{p,q}$ has the following properties:
\begin{itemize}
\item[(i)] $\E_X^{p,q}\subset \A_X^{p,q}\subset \W_X^{p,q}$ and $\oplus_q \A_X^{p,q}$ is a module over
$\oplus_q\E_X^{0,q}$,

\item[(ii)] $\A_{X_{reg}}^{p,q}=\E_{X_{reg}}^{p,q}$,

\item[(iii)] for any operator $\K$ on $(p,*)$-forms as in Section~\ref{KoPsection}
$\K\colon \A_X^{p,q} \to \A_X^{p,q-1}$,

\item[(iv)] if $\varphi$ is a section of $\A_X^{p,q}$ and $\omega$ is any $n-p$-structure form,
then $\debar (\omega\wedge\varphi)$ has the SEP.
\end{itemize}
\end{proposition}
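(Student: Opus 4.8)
The plan is to dispatch parts (i)--(iii) as essentially formal consequences of the definition \eqref{A} together with the mapping properties of $\K$ recorded in Section~\ref{KoPsection}, and to spend the real effort on part (iv), whose engine is Lemma~\ref{ASlemma}. For part (i), a smooth $(p,q)$-form is itself a term \eqref{A} with no operators applied, so $\E_X^{p,q}\subset\A_X^{p,q}$. To get $\A_X^{p,q}\subset\W_X^{p,q}$ I would induct on the number of operators in \eqref{A}: the innermost $\xi_0$ is smooth and so has the SEP, each $\K_j$ preserves the SEP by Section~\ref{KoPsection}, and wedging by a smooth form commutes with the regularizations $\chi(|h|^2/\epsilon)$ and hence preserves the SEP. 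The module property holds because wedging a term \eqref{A} by a smooth $(0,*)$-form only replaces the outermost smooth factor $\xi_\nu$ by $\xi_\nu$ wedged with that form, again a term of the form \eqref{A}.

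For part (ii), the same induction, now using the fact from Section~\ref{KoPsection} that $\K$ carries forms smooth near a point $x\in X'_{reg}$ to forms smooth near $x$, shows every term \eqref{A} is smooth on $X_{reg}$; together with $\E_X^{p,q}\subset\A_X^{p,q}$ this gives equality there. For part (iii), applying any operator $\K$ as in Section~\ref{KoPsection} to a term \eqref{A} simply produces a term of the same shape with one more layer, the new outermost smooth factor being a cut-off that is $\equiv 1$ near the point in question (so that the support condition in the definition is met); hence $\K$ maps $\A_X^{p,q}$ into $\A_X^{p,q-1}$.

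The heart of the matter is part (iv). By part (i) and Section~\ref{PMsektion} the product $\omega\wedge\varphi$ lies in $\W(X)$, so by Lemma~\ref{obslemma}~(i) it suffices to show that $\lim_{\epsilon\to 0}\debar\chi(|h|^2/\epsilon)\wedge\omega\wedge\varphi=0$ for every generically non-vanishing holomorphic tuple $h$ on $X$. By linearity I may take $\varphi$ to be a single term \eqref{A}, say with $\nu$ operators $\K_1,\ldots,\K_\nu$ using kernels $k_j(z^{j-1},z^j)$. Unwinding the iterated pushforwards by the projection formula, $\varphi$ becomes a single pushforward under $\pi^{z^\nu}\colon X^{\nu+1}\to X_{z^\nu}$ of the product of the kernels with the smooth forms $\xi_0,\ldots,\xi_\nu$; wedging by $\omega=\omega(z^\nu)$ and using the projection formula once more gives $\omega\wedge\varphi=\pi^{z^\nu}_*(\Xi\wedge T)$, where $T=\omega(z^\nu)\wedge k_\nu\wedge\cdots\wedge k_1$ is exactly the current of Lemma~\ref{ASlemma} and $\Xi$ is the smooth form assembled from the $\xi_j$.

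Finally, since $h=h(z^\nu)$ depends only on the base variable of $\pi^{z^\nu}$, the factor $\debar\chi(|h|^2/\epsilon)$ passes through the pushforward, so that
\begin{equation*}
\debar\chi(|h|^2/\epsilon)\wedge\omega\wedge\varphi=\pm\,\pi^{z^\nu}_*\big(\Xi\wedge \debar\chi(|h|^2/\epsilon)\wedge T\big).
\end{equation*}
Lemma~\ref{ASlemma} gives $\debar\chi(|h|^2/\epsilon)\wedge T\to 0$, and since wedging by the fixed smooth form $\Xi$ and pushing forward are continuous on currents, the right-hand side tends to $0$; by Lemma~\ref{obslemma}~(i), $\debar(\omega\wedge\varphi)$ has the SEP. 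I expect the main obstacle to be the bookkeeping in the previous paragraph, namely verifying that the iterated operators and wedgings genuinely collapse to the single pushforward $\pi^{z^\nu}_*(\Xi\wedge T)$ with $T$ precisely of the form treated in Lemma~\ref{ASlemma}, keeping careful track of which variable each factor depends on and checking that the smooth forms $\xi_j$ separate out cleanly as the harmless factor $\Xi$.
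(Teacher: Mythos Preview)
Your proposal is correct and follows essentially the same route as the paper: parts (i)--(iii) are formal from the definition and the mapping properties of $\K$, and for (iv) you reduce via Lemma~\ref{obslemma} to showing $\debar\chi(|h|^2/\epsilon)\wedge\omega\wedge\varphi\to 0$, then realize $\omega\wedge\varphi$ as the pushforward of a smooth form times a current $T$ as in \eqref{T} and invoke Lemma~\ref{ASlemma} with $h=h(z^\nu)$. You have in fact written out in more detail the bookkeeping that the paper's proof merely asserts.
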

\begin{proof}
(i), (ii), and (iii) are immediate from the definition of $\A_X^{p,q}$ and the general properties of the 
$\K$-operators in Section~\ref{KoPsection}. To prove (iv) we may assume that $\varphi$ is of the form
\eqref{A}. Then $\omega\wedge\varphi$ is a push-forward of $T\wedge \xi$, where $T$ is of the form \eqref{T}
and $\xi$ is a smooth form on $X^{\nu+1}$. Choosing $h=h(z^{\nu})$ in Lemma~\ref{ASlemma}
it follows that $\debar\chi(|h|^2/\epsilon)\wedge \omega\wedge\varphi\to 0$ as $\epsilon\to 0$
and so, by Lemma~\ref{obslemma}, $\debar (\omega\wedge\varphi)$ has the SEP.
\end{proof}

\begin{proof}[Proof of Theorem~\ref{main2}]
Let $D''\Subset D$ be a strictly pseudoconvex neighborhood of $\overline{D}'$
and carry out the construction of $k(\zeta,z)$ and $p(\zeta,z)$ in Section~\ref{intopsection}
in $D''\times D''$ using a weight $g(\zeta,z)$ with respect to $z\in D'$ such that
$z\mapsto g(\zeta,z)$ is holomorphic in $D'$ and $\zeta\mapsto g(\zeta,z)$ has compact support 
in $D''$. Notice that then $\Proj\varphi$ is holomorphic and that $g$, and hence also $p(\zeta,z)$, has bidegree $(*,0)$ 
in the $z$-variables
so that $\Proj\varphi=0$ if $\varphi$ has bidegree $(p,q)$ with $q\geq 1$. Let $\varphi\in\A^{p,q}(X)$.
By Proposition~\ref{Aprop1} (iv), $\debar (\omega\wedge\varphi)$ has the SEP and so Proposition~\ref{kprop1} shows that
\begin{equation}\label{eq:main2}
\varphi = \debar \K \varphi + \K(\debar\varphi) + \Proj\varphi
\end{equation}
in the sense of currents on $X'_{reg}$. 
Now, $\K\varphi\in\A^{p,q-1}(X')$ by Proposition~\ref{Aprop1} (iii). Hence, by Proposition~\ref{Aprop1} (iv)
and the comment after Proposition~\ref{kprop1}, $\debar\K\varphi$ has the SEP. In the same way $\debar\varphi$
has the SEP and so $\K(\debar\varphi)$ has the SEP. All terms in \eqref{eq:main2} thus have the SEP and 
therefore \eqref{eq:main2} holds on $X'$, concluding the proof.
\end{proof}

\begin{proposition}\label{Aprop2}
Let $X$ be a reduced complex space of pure dimension $n$. Then $\debar\colon\A_X^{p,q}\to\A_X^{p,q+1}$
and the sheaf complex \eqref{intro:Akplx} is exact.
\end{proposition}
\begin{proof}
Let $\varphi$ be a $\debar$-closed section of $\A_X^{p,q}$ over some small neighborhood $U$ of a given point $x\in X$;
we may assume that $U$ is an analytic subset of some pseudoconvex domain in some $\C^N$. 
As in the proof of Theorem~\ref{main2} above one shows that, for suitable operators $\K$ and $\Proj$, $\varphi=\debar\K\varphi$ if
$q\geq 1$ and $\varphi=\Proj\varphi$ is a section of $\Om^p_X$ if $q=0$.

It remains to see that $\debar\colon\A_X^{p,q}\to\A_X^{p,q+1}$. Let $\varphi$ be a $\debar$-closed section 
of $\A_X^{p,q}$ over some small neighborhood $U$ of a given point $x\in X$; we
may assume that $\varphi$ is of the form \eqref{A} and we will use induction over $\nu$. 
If $\nu=0$ then $\varphi=\xi_0$ is smooth and so
$\debar\varphi$ is in $\E_X^{p,q+1}\subset\A_X^{p,q+1}$. Assume that $\debar\varphi'$ is in $\A_X^{p,*}$
for $\varphi'$ of the form \eqref{A} with $\nu=\ell-1$. Since $\varphi'$ is a section of $\A_X^{p,*}$
it follows from Proposition~\ref{kprop1} that
\begin{equation}\label{fromp}
\varphi'=\debar\K_{\ell}\varphi' + \K_{\ell}(\debar\varphi') + \Proj_{\ell}\varphi'
\end{equation}
as currents on $U'_{reg}$ for some sufficiently small neighborhood $U'$ of $x$, cf.\
the proof of Theorem~\ref{main2} above. As in that same proof \eqref{fromp} extends to hold on $U'$.
The left hand side as well as the last term on the right hand side of \eqref{fromp} are obviously in $\A_X^{p,*}$ 
and since $\debar\varphi'$ is in $\A_X^{p,*}$ by assumption and $\K$-operators preserve $\A_X^{p,*}$ also the second term on the 
right hand side is in $\A_X^{p,*}$. Hence, $\debar\K_{\ell}\varphi'$ is a section of $\A_X^{p,*}$ over $U'$
showing that $\debar\varphi$ is in $\A_X^{p,*}$ for $\varphi$ of the form \eqref{A} with $\nu=\ell$. 
\end{proof}

Notice that Theorem~\ref{main1} follows from Propositions \ref{Aprop1} and \ref{Aprop2}.

\begin{proof}[Proof of Proposition~\ref{serreconditions}]
Assume that condition (i) of Proposition~\ref{serreconditions} holds. Then, in view of the last paragraph in Section~\ref{BHPsheaf},
any holomorphic $p$-form on the regular part at least extends to a section of $\omega^p_X$; in particular, 
such forms are meromorphic. It is thus clear from Proposition~\ref{extensionprop}
that $\Om^p(U)\to \Om^p(U_{reg})$ is surjective for any open $U\subset X$; the injectivity is obvious. We remark that the
implication $\textrm{(i)} \Rightarrow \textrm{(ii)}$ also follows from \cite[Satz~III]{Scheja}.

Assume that condition (ii) of Proposition~\ref{serreconditions} holds. In view of 
\cite[Theorem~1.14, $(d)\Rightarrow (b)$]{SiuTraut} it is sufficient to show that the restriction map
$H^1(U,\Om_X^p)\to H^1(U_{reg},\Om_X^p)$ is injective for any open $U\subset X$. By Corollary~\ref{cor1},
$H^1(U,\Om_X^p)\simeq H^1(\A^{p,\bullet}(U),\debar)$, so let $\varphi\in \A^{p,\bullet}(U)$ be $\debar$-closed
and assume that its image in $H^1(\A^{p,\bullet}(U_{reg}),\debar)$ vanishes, i.e., that there is 
$\psi\in \A^{p,0}(U_{reg})$ such that $\varphi=\debar\psi$ on $U_{reg}$.
Let $x\in U_{sing}$. By Theorem~\ref{main1}, there is a neighborhood $V\subset U$ of $x$ and a $\psi'\in \A^{p,0}(V)$ such that
$\varphi=\debar\psi'$ in $V$. Then $\psi-\psi'$ is holomorphic on $V_{reg}$ and so, by condition (ii), $\psi-\psi'\in \Om^p(V)$.
Hence, $\psi=\psi'+\psi-\psi'$ can be locally extended across $U_{sing}$ to a section of $\A_X^{p,0}$. In view of the 
SEP, extensions are unique and so $\psi\in \A^{p,0}(U)$ 
and consequently $\debar\psi\in \A^{p,1}(U)$. 
The equality $\varphi=\debar\psi$ on $U_{reg}$ therefore extends to hold on $U$ by the SEP and so $\varphi$ defines the 
zero element in $H^1(U,\Om_X^p)$.
\end{proof}


\begin{proof}[Proof of Corollary~\ref{smoothcor}]
Assume that $X=\{f_1=\cdots =f_{\kappa}=0\}\subset D\subset\C^N$ 
has codimension $\kappa$ and that $df_1\wedge \cdots\wedge df_{\kappa}\neq 0$ on $X_{reg}$. Let $\tilde{\omega}$
be a meromorphic $n$-form in $D$ such that the polar set of $\tilde{\omega}$ intersects $X$ properly and such that, 
outside of the polar set of $\tilde{\omega}$,
$df_1\wedge\cdots\wedge df_{\kappa}\wedge \tilde{\omega}=dz$ for some local coordinates $z$ in $D$. Let $\omega$ be the pullback
of $\tilde{\omega}$ to $X$. Then $\omega$ is a holomorphic $n$-form on $X_{reg}$ that is uniquely determined by $dz$ and $X$;
in fact, $\omega$ is the Poincar\'{e}-Leray residue of the meromorphic form
$dz/(f_1\cdots f_{\kappa})$. If $\omega$ has a strongly holomorphic extension to $X$, then, since
$df_1\wedge\cdots\wedge df_{\kappa}\wedge \omega=dz$, it follows that 
$df_1\wedge\cdots\wedge df_{\kappa}\neq 0$ on $X$.
\end{proof}

Some \emph{a priori} assumption on $X$ is necessary for Corollary~\ref{smoothcor}. In fact, if 
$X=\{z_1=z_4=0\}\cup \{z_2=z_3=0\}\subset \C^4$ then one can check that any holomorphic $2$-form on $ X_{reg}$ extends
across $X_{sing}$ to a section of $\Om^2_X$.

\subsection{The sheaves $\Bsheaf_X^{n-p,\bullet}$}
To define $\Bsheaf_X^{n-p,q}$ we follow \cite[Definition~4.1]{RSWSerre} and we say that a $(n-p,q)$-current $\psi$
on an open subset $U\subset X$ is a section of $\Bsheaf_X^{n-p,q}$ over $U$ if for every $x\in U$ the germ
$\psi_x$ can be written as a finite sum of terms
\begin{equation}\label{B}
\xi_{\nu} \wedge \check{\K}_{\nu} (\cdots \xi_2\wedge \check{\K}_2 (\xi_1\wedge\check{\K}_1(\omega\wedge\xi_0)) \cdots ),
\end{equation}
where $\omega$ is an $n-p$-structure form and the $\xi_j$ are
smooth $(0,q)$-forms with support where $\zeta\mapsto k_j(\zeta,z)$ is defined. Recall that
$\omega$ is a $(n-p,*)$-current with values in a bundle $\oplus_k E_k\restriction_X$ so we need $\xi_0$
to take values in $\oplus_k E^*_k\restriction_X$ to make $\omega\wedge\xi_0$ scalar-valued.

It is immediate from the definition and from the general properties of the $\check{\K}$-operators
that $\Bsheaf_X^{n-p,q}\subset \W_X^{n-p,q}$, that $\Bsheaf_{X_{reg}}^{n-p,q} = \E_{X_{reg}}^{n-p,q}$,
that the $\check{\K}$-operators and $\check{\Proj}$-operators preserve $\oplus_q\Bsheaf_X^{n-p,q}$,
and that $\oplus_q\Bsheaf_X^{n-p,q}$ is a module over $\oplus_q\E_X^{0,q}$. Let $\psi$ be a 
smooth $(n-p,q)$-form and let $\omega$ be an $n-p$-structure form in a neighborhood of some point in $X$. 
Then, by Lemma~\ref{divlemma}, there is a smooth $(0,q)$-form $\psi'$ (with values in the appropriate bundle)
such that $\psi=\omega_0\wedge\psi'$. Hence we see that $\E_X^{n-p,q}\subset \Bsheaf_X^{n-p,q}$.
Let us also notice that if $\psi$ is in $\Bsheaf_X^{n-p,q}$ then $\debar\psi$ has the SEP. In fact, 
we may assume that $\psi$ is of the form \eqref{B} so that $\psi=\pi_* T\wedge\xi$,
where $T$ is given by \eqref{T}, $\xi$ is a smooth form, and $\pi$ is the natural projection 
$X^{\nu+1}\to X_{z^0}$. Letting $h=h(z^0)$ be a generically non-vanishing holomorphic tuple on $X_{z^0}$,
we have that $\debar\chi(|h|^2/\epsilon)\wedge T\wedge\xi\to 0$ by Lemma~\ref{ASlemma}. Hence, by Lemma~\ref{obslemma},
we see that $\debar\psi$ has the SEP.

\begin{proof}[Proof of Theorem~\ref{main4}]
We first interchange the roles of $p$ and $n-p$ in the formulation of Theorem~\ref{main4}.
Let $D''\Subset D$ be a strictly pseudoconvex neighborhood of $\overline{D}'$
and carry out the construction of $k(\zeta,z)$ and $p(\zeta,z)$ in Section~\ref{intopsection}
in $D''\times D''$ using a weight $g(\zeta,z)$ with respect to $\zeta\in D'$ such that
$\zeta\mapsto g(\zeta,z)$ is holomorphic in $D'$ and $z\mapsto g(\zeta,z)$ has compact support 
in $D''$. Let $\psi\in\Bsheaf^{n-p,q}(X)$. By Proposition~\ref{kprop2} we have 
\begin{equation}\label{kul}
\psi=\debar\check{\K}\psi + \check{\K}(\debar\psi) + \check{\Proj}\psi
\end{equation}
as currents on $X'_{reg}$. From what we noticed just before the proof all terms have the SEP
and so \eqref{kul} holds on $X'$. 
Notice that $\check{\Proj}\psi=A_{q}(\zeta)\wedge\omega_{q}(\zeta)$,
where $\A_{q}$ is holomorphic. Since, if $\Om_X^p$ is Cohen-Macaulay we may
choose $\omega=\omega_0$ to be $\debar$-closed it follows that $\check{\Proj}\psi\in \omega^{n-p}(X')$
if $q=0$ and $\check{\Proj}\psi=0$ if $q\geq 1$.
\end{proof}

\begin{proof}[Proof of Theorem~\ref{main3}]
As in the proof above we interchange the roles of $p$ and $n-p$ in the formulation of Theorem~\ref{main3}.
We have already noted that (i) and (ii) hold. 

To show that $\debar\colon \Bsheaf_X^{n-p,q}\to\Bsheaf_X^{n-p,q+1}$ let $\psi$ be a section of 
$\Bsheaf_X^{n-p,q}$ in a neighborhood of some $x\in X$; we may assume that $\psi$ is of the form \eqref{B}
and we use induction over $\nu$.
If $\nu=0$ then $\psi=\omega\wedge\xi_0$ and it is enough to see that $\debar\omega$ is a section of 
$\Bsheaf_X^{n-p,*}$ (with values in $E\restriction_X$); but since $\debar\omega=f\omega$ this is clear.
The induction step is done in the same way as in the proof of Proposition~\ref{Aprop2}.

To show that $\omega_X^{n-p,q}$ is coherent and that $\omega_X^{n-p}=\omega_X^{n-p,0}$
assume that $X$ can be identified with an analytic subset of a strictly pseudoconvex domain $D\subset \C^N$. 
Recall that \eqref{eq:resol} is a resolution of $\Om_X^p$ in $D$. Taking $\Hom$ into $\mathit{\Omega}^N$ we get a complex 
isomorphic to $(\hol(E^*_{\bullet})\otimes \mathit{\Omega}^N,\debar)$ with associated cohomology sheaves
isomorphic to $\Ext^{\bullet}(\Om_X^p,\mathit{\Omega}^N)$, which are coherent; cf.\ Section~\ref{BHPsheaf}.
We define the map 
\begin{equation*}
\varrho_q\colon \hol(E^*_{\kappa+q})\otimes \mathit{\Omega}^N  \to  \Bsheaf_X^{n-p,q}, \quad
\varrho_q(\xi dz) = i^*\xi \cdot \omega_q.
\end{equation*}
Since
\begin{eqnarray*}
\debar \varrho_q (\xi dz) &=& i^*\xi\cdot \debar \omega_q = i^*\xi\cdot f_{\kappa+q+1}\restriction_X\omega_{q+1}=
i^*f^*_{\kappa+q+1}\restriction_X\xi \cdot \omega_{q+1} \\
 &=& \varrho_{q+1}(f^*_{\kappa+q+1} \xi dz) 
\end{eqnarray*}
the map $\varrho_{\bullet}$ is a map of complexes and so induces a map on cohomology.
In view of Proposition~\ref{BHPprop} the proof will be complete if we show that $\varrho_{\bullet}$ is a 
quasi-isomorphism.

Since $i_*\omega_q=R_{\kappa+q}\wedge dz$ it follows from \cite[Theorem~7.1]{ANoetherDual} that the map on cohomology is injective.
For the surjectivity, let $\psi\in \Bsheaf^{n-p,q}(X)$ be $\debar$-closed and choose a weight $g(\zeta,z)$ 
in the kernels $k(\zeta)$ and $p(\zeta,z)$ with respect 
to $\zeta$ in some $D'\Subset D$ such that $\zeta\mapsto g(\zeta,z)$ is holomorphic in $D'$ and $z\mapsto g(\zeta,z)$ has
compact support in $D$. As in the proof of Theorem~\ref{main4} we get that 
$\psi=\debar\check{\K}\psi+\check{\Proj}\psi$ on $X'_{reg}:=X_{reg}\cap D'$ and so the cohomology class of $\psi$ is represented
by $\check{\Proj}\psi$.
From the definition of $p(\zeta,z)$ in Section~\ref{intopsection} we see that
\begin{equation*}
\check{\Proj}\psi(\zeta)=\pm\omega_q(\zeta)\wedge \int_{X_z} \tilde{p}_{\kappa+q}(\zeta,z)\wedge \psi(z)
\end{equation*}
and $\zeta\mapsto \tilde{p}_{\kappa+q}(\zeta,z)$ is a section of $\hol(E_{\kappa+q}^*)$ over $D'$ by the choice of $g$.
We finally show that 
\begin{equation}\label{sent3}
f^*_{\kappa+q+1} \int_{X_z} \tilde{p}_{\kappa+q}(\zeta,z)\wedge \psi(z)=0.
\end{equation}
First notice that it follows from \eqref{sent1} and \eqref{sent2} that, for each $k$, 
$\tilde{p}_k(\zeta,z)\wedge d\eta=H^0_k\wedge g_{N-k}$. Moreover,
\begin{eqnarray*}
f^*_{k+1}H^0_k\wedge g_{N-k} &=& H^0_kf_{k+1} \wedge g_{N-k} = \big(f_1(z)H^1_{k+1} +\delta_{\eta}H^0_{k+1}\big)\wedge g_{N-k} \\
&=& f_1(z)H^1_{k+1}\wedge g_{N-k} \pm H^0_{k+1}\wedge \delta_{\eta}g_{N-k} \\ 
&=& f_1(z)H^1_{k+1}\wedge g_{N-k} \pm H^0_{k+1}\wedge \debar g_{N-k-1} \\
&=& f_1(z)H^1_{k+1}\wedge g_{N-k} +\debar (H^0_{k+1}\wedge g_{N-k-1}) \\
&=:& (f_1(z)A_k + \debar B_k)\wedge d\eta,
\end{eqnarray*}
where $A_k$ and $B_k$ take values in $\textrm{Hom}(E^{\zeta}_{k+1},E_1^z)$ and 
$\textrm{Hom}(E^{\zeta}_{k+1},E_0^z)$ respectively; the second equality follows from the properties of
the Hefer morphisms, the third by noting that 
$0=\delta_{\eta}(H^0_{k+1}\wedge g_{N-k})=\delta_{\eta}H^0_{k+1}\wedge g_{N-k}\pm H^0_{k+1}\wedge \delta_{\eta}g_{N-k}$,
the fourth since $g$ is a weight, the fifth since the Hefer morphisms are holomorphic, and the sixth by collecting 
all $d\eta_j$. Hence, we get that $f^*_{k+1}\tilde{p}_k(\zeta,z)=f_1(z)A_k + \debar B_k$.
Since $f_{1\restriction_X}=0$ and by Stokes' theorem, \eqref{sent3} follows.
\end{proof}


\section{Serre duality}\label{Serresection}

\subsection{The trace map}
The key to define the trace map is the following slight generalization of \cite[Theorem~5.1]{RSWSerre};
the proof of that theorem goes through in our case essentially verbatim. 

\begin{theorem}\label{RSWthm}
Let $X$ be a reduced complex space of pure dimension $n$. There is a unique map
\begin{equation*}
\wedge \colon \A_X^{p,q}\times \Bsheaf_X^{n-p,q'} \to \W_X^{n,q+q'}
\end{equation*}
extending the exterior product on $X_{reg}$. Moreover,
if $\varphi$ and $\psi$ are sections of $\A_X^{p,q}$ and $\Bsheaf_X^{n-p,q'}$, respectively,
then $\debar (\varphi\wedge\psi)$ has the SEP.
\end{theorem}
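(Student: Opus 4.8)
The plan is to exploit the explicit description of the currents in $\A_X^{p,q}$ and $\Bsheaf_X^{n-p,q'}$ and to realize the product as a single pushforward of a current of the type in Lemma~\ref{ASlemma}. First I observe that uniqueness comes for free. The target $\W_X^{n,q+q'}$ consists of currents with the SEP, two of which coincide as soon as they coincide on $X_{reg}$; and on $X_{reg}$ both factors are smooth, since $\A_{X_{reg}}^{p,q}=\E_{X_{reg}}^{p,q}$ and $\Bsheaf_{X_{reg}}^{n-p,q'}=\E_{X_{reg}}^{n-p,q'}$. Hence any extension of the exterior product must agree on $X_{reg}$ with the ordinary wedge of the smooth forms $\varphi\restriction_{X_{reg}}$ and $\psi\restriction_{X_{reg}}$, and is therefore unique; the same remark will make the construction below independent of the chosen representations. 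Since the statement is local and bilinear, I may assume $X\hookrightarrow D\subset\C^N$ and that $\varphi$ and $\psi$ are single terms of the forms \eqref{A} and \eqref{B}, respectively.

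Next I would unwind \eqref{A} and \eqref{B} as iterated pushforwards onto a common output variable $z$:
\[
\varphi=\pi^z_*\big(\xi_\nu\wedge k_\nu\wedge\cdots\wedge k_1\wedge\xi_0\big),\qquad
\psi=\pi^z_*\big(\eta_\mu\wedge\check{k}_\mu\wedge\cdots\wedge\check{k}_1\wedge\omega\wedge\eta_0\big),
\]
where the $\xi_j,\eta_j$ are the smooth factors and every $k_j,\check{k}_j$ is a kernel of the form \eqref{k}. The crucial point is that $\K$ and $\check{\K}$ use the \emph{same} kernels, differing only in which projection is taken; consequently, after relabelling by $z^0,\dots,z^{M}$ ($M=\nu+\mu$) the variables running from the innermost factor $\xi_0$ of $\varphi$ through the shared output $z=z^\nu$ to the innermost factor $\omega$ of $\psi$, concatenating the two chains at $z$ produces a current
\[
T=\omega(z^{M})\wedge k_{M}(z^{M-1},z^{M})\wedge\cdots\wedge k_1(z^0,z^1)
\]
that is \emph{exactly} of the form \eqref{T}: every kernel carries its $n-p$-structure form in its lower-index argument and the innermost $\omega$ of $\psi$ supplies the top factor $\omega(z^{M})$. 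I then define $\varphi\wedge\psi:=P_*(T\wedge\Xi)$, where $\Xi$ is the smooth product of all $\xi_j$ and $\eta_j$ (including the $(p,\ast)$-form $\xi_0$) and $P$ is the projection of $X^{M+1}$ onto the distinguished copy $X_z$. Over $X_{reg}$ all factors are smooth and $P$ is a composition of simple projections, so Fubini shows that $P_*(T\wedge\Xi)$ restricts to $\varphi\wedge\psi$ there; by the first paragraph this is the unique extension of the exterior product.

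It remains to prove the two SEP statements. By Lemma~\ref{ASlemma} the current $T$ is almost semi-meromorphic; writing $T=a\wedge\omega(z^{M})$ with $a$ the (almost semi-meromorphic) product of the kernels and noting that the top structure form $\omega(z^{M})$, tensored with the constants in the remaining variables, has the SEP, the results recalled in Section~\ref{PMsektion} give that $T$, and hence $T\wedge\Xi$, has the SEP on $X^{M+1}$. Since pushforward under a simple projection preserves the SEP, $\varphi\wedge\psi\in\W_X^{n,q+q'}$. For $\debar(\varphi\wedge\psi)$ I would appeal to Lemma~\ref{obslemma}~(i): it suffices to show that $\debar\chi(|h|^2/\epsilon)\wedge(\varphi\wedge\psi)\to 0$ for every generically non-vanishing holomorphic tuple $h$ on $X_z$. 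Pulling the cut-off inside $P_*$ turns $h$ into such a tuple in the chain variable $z=z^\nu$, and the second conclusion of Lemma~\ref{ASlemma} gives $\debar\chi(|h|^2/\epsilon)\wedge T\to 0$; as $\Xi$ is smooth and $P_*$ continuous, the limit passes to $\varphi\wedge\psi$. I expect the main obstacle to be the bookkeeping in the middle step—verifying that the concatenated chain is genuinely of the form \eqref{T}, and that the passage from $p=0$ (as in \cite{RSWSerre}) to general $p$, where $\xi_0$ becomes a smooth $(p,\ast)$-form and the structure forms are $n-p$-structure forms, leaves the argument intact. This is precisely what the structure-form formalism of Proposition~\ref{fundprop} and Lemma~\ref{ASlemma} guarantees.
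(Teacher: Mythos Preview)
Your proposal is correct and follows precisely the route the paper indicates: the paper does not give its own proof but states that the argument of \cite[Theorem~5.1]{RSWSerre} goes through essentially verbatim, and what you have written is exactly that argument adapted to general $p$---express $\varphi$ and $\psi$ as pushforwards, concatenate the two chains at the shared output variable so that the resulting current is of the form \eqref{T}, and invoke Lemma~\ref{ASlemma} for both the SEP of the product and the SEP of its $\debar$; cf.\ the analogous reductions the paper does in the proofs of Proposition~\ref{Aprop1}~(iv) and in the paragraph after the definition of $\Bsheaf_X^{n-p,q}$. Your bookkeeping at the joint $z^{\nu}$ is right: the kernels from the $\K$- and $\check{\K}$-sides carry their $n-p$-structure form in the first argument, so every variable in the concatenated chain receives exactly one structure form, and the distinction between $\K$- and $\check{\K}$-kernels is only a difference of weights, which Lemma~\ref{ASlemma} explicitly allows.
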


It follows that $\debar (\varphi\wedge\psi)=\debar\varphi\wedge\psi+(-1)^{p+q}\varphi\wedge\debar\psi$
since both sides have the SEP and it clearly holds on $X_{reg}$.

Let $\varphi\in \A^{p,q}(X)$ and $\psi\in\Bsheaf^{n-p,n-q}(X)$ and assume that at least one of
$\varphi$ and $\psi$ has compact support. By Theorem~\ref{RSWthm}, $\varphi\wedge\psi$ is a
well-defined section of $\W_X^{n,n}$ with compact support and we may define the trace map 
$(\varphi,\psi)\mapsto \int_X \varphi\wedge\psi$; the integral is interpreted as
the action of $\varphi\wedge\psi$ on the constant function $1$ on $X$.
We notice that if $h$ is a generically non-vanishing holomorphic 
section of a Hermitian vector bundle such that $\{h=0\}\supset X_{sing}$ then the trace map may be computed as
$\lim_{\epsilon\to 0}\int_X \chi(|h|^2/\epsilon)\, \varphi\wedge \psi$.
We get an induced trace map on the level of cohomology since if, say, 
$\varphi=\debar\tilde{\varphi}$ for some 
$\tilde{\varphi}\in \A^{p,q-1}(X)$ with compact support if $\varphi$ has, then
$\varphi\wedge\psi=\debar(\tilde{\varphi}\wedge\psi)$ by the Leibniz rule 
and so $\int_X\varphi\wedge\psi=0$.

\subsection{Local duality}
Let $\tilde{X}$ be an analytic subset of $\overline{D}\subset\C^N$, where $D$ is pseudoconvex, and set 
$X:=\tilde{X}\cap D$. Let $F$ be a holomorphic vector bundle on $X$ and let $\F$ be the associated 
locally free $\hol_X$-module. Since $X$ is Stein and $\F\otimes \Om_X^p$ is coherent it follows from Corollary~\ref{cor1}
that the complex
\begin{equation*}
0\to \A^{p,0}(X,F) \stackrel{\debar}{\longrightarrow} \A^{p,1}(X,F) \stackrel{\debar}{\longrightarrow}
\cdots \stackrel{\debar}{\longrightarrow} \A^{p,n}(X,F) \to 0
\end{equation*}
is exact except for on the level $0$ where the cohomology is $\Om^p(X,F)$.
We endow $\Om^p(X,F)$ with the standard canonical Fr\'{e}chet space topology, see, e.g.,
\cite[Chapter~IX]{Demailly}.

\begin{theorem}\label{localdual}
Let $\Bsheaf_c^{n-p,q}(X,F^*)$ be the space of sections of $\F^*\otimes\Bsheaf_X^{n-p,q}$ with compact support in $X$.
The complex
\begin{equation}\label{hund}
0\to \Bsheaf_c^{n-p,0}(X,F^*) \stackrel{\debar}{\longrightarrow} \Bsheaf_c^{n-p,1}(X,F^*) \stackrel{\debar}{\longrightarrow}
\cdots \stackrel{\debar}{\longrightarrow} \Bsheaf_c^{n-p,n}(X,F^*) \to 0
\end{equation}
is exact except for on the level $n$ and the pairing
\begin{equation}\label{hund2}
\Om^p(X,F)\times H^n\big(\Bsheaf_c^{n-p,\bullet}(X,F^*),\debar\big) \to  \C, \quad
(\varphi,[\psi])  \mapsto  \int_X \varphi\cdot \psi
\end{equation}
makes $H^n\big(\Bsheaf_c^{n-p,\bullet}(X,F^*),\debar\big)$ the topological dual of $\Om^p(X,F)$.
\end{theorem}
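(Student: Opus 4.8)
The plan is to deduce the theorem from two ingredients: the fact that, because $X$ is Stein, the complex $(\A^{p,\bullet}(X,F),\debar)$ is a resolution of the Fr\'echet space $\Om^p(X,F)$, and the observation that the trace pairing furnished by Theorem~\ref{RSWthm} exhibits the complex \eqref{hund} as the topological dual complex of $(\A^{p,\bullet}(X,F),\debar)$. Everything will then follow from the abstract duality principle for complexes of topological vector spaces.

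First I would record the ``primal'' side. By Corollary~\ref{cor1} one has $H^q(\A^{p,\bullet}(X,F),\debar)\simeq H^q(X,\F\otimes\Om_X^p)$, and since $X$ is Stein, Cartan's Theorem~B shows this vanishes for $q\geq 1$ and equals $\Om^p(X,F)$ for $q=0$; this is exactly the exactness statement recorded just before the theorem. The essential point for what follows is that, once each $\A^{p,q}(X,F)$ is given its natural Fr\'echet (current) topology, \emph{all} the cohomology groups $H^q(\A^{p,\bullet}(X,F))$ are Hausdorff (Fr\'echet for $q=0$ and zero otherwise), i.e.\ $\debar$ has closed range at every level.

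Next comes the pairing. For $\varphi\in\A^{p,q}(X,F)$ and $\psi\in\Bsheaf_c^{n-p,n-q}(X,F^*)$ the product $\varphi\wedge\psi$ is a well-defined section of $\W_X^{n,n}$ of compact support by Theorem~\ref{RSWthm}, and $(\varphi,\psi)\mapsto\int_X\varphi\cdot\psi$ is the trace map entering \eqref{hund2}. I need two facts about it. First, $\debar$ on the $\Bsheaf_c$-side is, up to sign, the transpose of $\debar$ on the $\A$-side: this follows from Stokes' theorem together with the fact, guaranteed by Theorem~\ref{RSWthm}, that $\varphi\wedge\psi$ has the SEP, so that $\int_X\debar(\varphi\wedge\psi)=0$. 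Second, the pairing is separately continuous and separating, identifying $\Bsheaf_c^{n-p,n-q}(X,F^*)$, with its inductive-limit topology, with the strong dual of $\A^{p,q}(X,F)$. With these in hand I would invoke the duality principle: if $(E^\bullet,d)$ is such a complex and $\big((E')^\bullet,d'\big)$ is its dual complex with $d'=\pm d^t$, then $H^{n-q}\big((E')^\bullet\big)\simeq \big(H^q(E^\bullet)\big)'$ whenever $H^q(E^\bullet)$ and $H^{q+1}(E^\bullet)$ are Hausdorff. Applying this with $E^\bullet=\A^{p,\bullet}(X,F)$ and $(E')^\bullet=\Bsheaf_c^{n-p,\bullet}(X,F^*)$, the Hausdorff hypothesis holds at every level by the first step. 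For $q\geq 1$ the right-hand side vanishes, which gives exactness of \eqref{hund} in all degrees below $n$; for $q=0$ it yields $H^n\big(\Bsheaf_c^{n-p,\bullet}(X,F^*)\big)\simeq \Om^p(X,F)'$, with the isomorphism induced precisely by \eqref{hund2}.

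The hard part will be the topological bookkeeping in the last two steps rather than any new geometry. One must fix a topology on the current spaces $\A^{p,q}(X,F)$ making them Fr\'echet (of admissible, e.g.\ Schwartz/nuclear, type), check that $\debar$ is continuous, that the trace pairing is separately continuous and nondegenerate, and that $\Bsheaf_c^{n-p,n-q}(X,F^*)$ genuinely represents the strong dual so that the abstract lemma applies. The most delicate point is the injectivity of the induced map $H^n(\Bsheaf_c^{n-p,\bullet})\to\Om^p(X,F)'$ — that a compactly supported $\debar$-closed $\psi$ annihilating all of $\Om^p(X,F)$ is $\debar$-exact within $\Bsheaf_c$ — and this is exactly what the closed-range (Hausdorffness) statement from the first step is designed to supply. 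The remaining verifications (Stokes, the SEP, and continuity) are routine given Theorem~\ref{RSWthm}, and the whole argument runs parallel to the corresponding local duality statement in \cite{RSWSerre}.
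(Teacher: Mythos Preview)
Your approach has a genuine gap: the assertion that the trace pairing identifies $\Bsheaf_c^{n-p,n-q}(X,F^*)$ with the strong topological dual of $\A^{p,q}(X,F)$ is unsubstantiated and is, in effect, the hard content of the theorem rather than a ``routine verification''. The sheaves $\A_X^{p,q}$ and $\Bsheaf_X^{n-p,q}$ are defined ad hoc as the smallest sheaves of pseudomeromorphic currents closed under the operators $\K$ and $\check{\K}$ respectively; nowhere in the paper are they given a Fr\'echet (or any) topology, and Theorem~\ref{RSWthm} only supplies a well-defined product, not a perfect pairing. Even in the smooth case the dual of $\E^{p,q}(X)$ is the space of all compactly supported $(n-p,n-q)$-currents, not $\E_c^{n-p,n-q}(X)$; there is no reason to expect the dual of $\A^{p,q}(X,F)$ to be as small as $\Bsheaf_c^{n-p,n-q}(X,F^*)$. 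In particular the surjectivity of $[\psi]\mapsto\int_X(\cdot)\wedge\psi$ onto $\Om^p(X,F)'$ --- which you would need to extract from the abstract lemma --- is exactly the nontrivial part of the theorem, and your framework offers no mechanism for it.

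The paper's proof is entirely different and avoids any topology on $\A$ or $\Bsheaf$. Exactness of \eqref{hund} for $q<n$ and injectivity at $q=n$ come directly from the Koppelman formula $\psi=\debar\check{\K}\psi+\check{\Proj}\psi$: the kernel $p(\zeta,z)$ is constructed so that $z\mapsto \tilde p_{\kappa+k}(\zeta,z)$ is a section of $\Om_X^p$, whence $\check{\Proj}\psi$ vanishes for degree reasons if $q<n$, and vanishes if $\psi$ annihilates all of $\Om^p(X)$. Surjectivity onto $\Om^p(X,F)'$ is then proved by hand: a continuous functional $\lambda$ on $\Om^p(X,F)$ is extended via Hahn--Banach to a compactly supported current $\mu$ in the ambient $D$, and $\check{\Proj}\mu\in\Bsheaf_c^{n-p,n}(X)$ is shown, using the representation $\varphi=\Proj\varphi$, to satisfy $\lambda(\varphi)=\int_X\varphi\wedge\check{\Proj}\mu$. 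The explicit integral operators do precisely the work that your appeal to the abstract duality lemma cannot.
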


\begin{proof}[Sketch of proof.]
Since we are in the local situation we may assume that an element in $\Bsheaf^{n-p,q}_c(X,F^*)$
is just a tuple of elements in $\Bsheaf_c^{n-p,q}(X)$ and carry out the following argument component-wise.
Let $\psi\in \Bsheaf_c^{n-p,q}(X)$ be $\debar$-closed. Let $D'\Subset D''\subset D$, where
$\textrm{supp}\,\psi\subset D'$ and $D''$ is strictly pseudoconvex, and construct $k(\zeta,z)$ and 
$p(\zeta,z)$ as in Section~\ref{intopsection} with a weight $g(\zeta,z)$ with respect to $z\in D'$ such 
that $z\mapsto g(\zeta,z)$ is holomorphic in $D'$ and $\zeta\mapsto g(\zeta,z)$ has compact support in
$D''$. Then $p(\zeta,z)=\sum_k\tilde{p}_{\kappa+k}(\zeta,z)\wedge \omega_k(\zeta)$,
where $\zeta\mapsto \tilde{p}_{\kappa+k}(\zeta,z)$ has compact support in $D''$ and 
$z\mapsto \tilde{p}_{\kappa+k}(\zeta,z)$ is a section of $\Om^p_X$ over $X':=X\cap D'$.

As in the proof of Theorem~\ref{main4} we get 
$\psi=\debar \check{\K}\psi + \check{\Proj}\psi$
in $X'$. From the properties of $p(\zeta,z)$ we get that $\check{\Proj}\psi=0$ if $q<n$
so \eqref{hund} is exact except for on the level $n$.
If $q=n$ then the cohomology class of $\psi$ is represented by $\check{\Proj}\psi$ and 
\begin{equation*}
\check{\Proj}\psi=\pm \sum_{k\geq 0} \omega_k(\zeta)\wedge
\int_{X_z} \tilde{p}_{\kappa+k}(\zeta,z)\wedge\psi(z).
\end{equation*}
Hence, if $\int_X\varphi \psi=0$ for all $\varphi\in \Om^p(X)$ then $\check{\Proj}\psi =0$
and the cohomology class of $\psi$ thus is $0$. It follows that 
$H^n(\Bsheaf_c^{n-p,\bullet}(X),\debar)$, via \eqref{hund2}, is a 
subset of the topological dual of $\Om^p(X)$.

Let $\lambda$ be a continuous linear functional on $\Om^p(X)$. Then $\lambda$ induces 
a continuous functional $\tilde{\lambda}$ on $\mathit{\Omega}^p(D)$ that has to be carried by some 
compact $K\Subset D$. By the Hahn-Banach theorem there is an $(N-p,N)$-current $\mu$ of order $0$ 
in $D$ with support in a neighborhood $U(K)\Subset D$ of $K$ such that 
$\tilde{\lambda}(\tilde{f})=\int \tilde{f}\wedge \mu$ for all $\tilde{f}\in \mathit{\Omega}^p(D)$.
Now choose a weight $g(\zeta,z)$ with respect to $z\in U(K)$ that is holomorphic for $z\in U(K)$ and has 
compact support in $D_{\zeta}$ and let $p(\zeta,z)=\sum_k\tilde{p}_{\kappa+k}(\zeta,z)\wedge \omega_k(\zeta)$
be a corresponding integral kernel. We set
\begin{equation*}
\check{\Proj}\mu:=\sum_{k\geq 0}  \omega_k(\zeta)\wedge
\int_{D_z} \tilde{p}_{\kappa+k}(\zeta,z)\wedge\mu(z)
\end{equation*}
and observe that $\check{\Proj}\mu\in\Bsheaf_c^{n-p,n}(X)$.
Let $\varphi\in\Om^p(X)$ and set $\tilde{\varphi}:=\Proj \varphi$. Then $\tilde{\varphi}\in \mathit{\Omega}^p(U(K))$
by the choice of weight and moreover, $\tilde{\varphi}_{\restriction_{U(K)\cap X}}=\varphi_{\restriction_{U(K)\cap X}}$.
We get
\begin{equation*}
\lambda(\varphi)=\tilde{\lambda}(\tilde{\varphi}) = \int_{D_z} \tilde{\varphi}\wedge\mu
=\int_{D_z} \Proj\varphi\wedge\mu =\int_{X_{\zeta}} \varphi\wedge \check{\Proj}\mu
\end{equation*}
and so $\lambda$ is given by integration against $\check{\Proj}\mu\in\Bsheaf_c^{n-p,n}(X)$.
For more details of the last part of the proof see the proof of \cite[Theorem~6.1]{RSWSerre}.
\end{proof}

\subsection{Global duality}
Let us briefly recall how one can patch up the local duality to the global one of Theorem~\ref{main5};
cf., e.g., \cite[Section~6.2]{RSWSerre}.
Let $\mathcal{U}:=\{U_j\}$ be a locally finite open covering of $X$ such that each $U_j$ can be identified with
an analytic subset of some pseudoconvex domain in some $\C^N$. In view of Theorem~\ref{main1}
and Corollary~\ref{cor1} this gives us a Leray covering for $\F\otimes\Om_X^p$. Recall that 
spaces of sections of $\F\otimes\Om_X^p$ has a standard Fr\'{e}chet space structure.
We let $C^k(\mathcal{U}, \F\otimes\Om_X^p)$ be the group of formal sums
\begin{equation*}
\sum_{i_0 \cdots i_k} \varphi_{i_0\cdots i_k} U_{i_0}\wedge \cdots \wedge U_{i_k}, \quad  \varphi_{i_0\cdots i_k}
\in \F\otimes\Om^p(U_{i_0}\cap \cdots \cap U_{i_k}),
\end{equation*}
with the product topology; $U_{i_0}\wedge \cdots \wedge U_{i_k}$ is the formal exterior product 
of the symbols $U_i$ with the suggestive formal computation rules, e.g., $U_1\wedge U_2=-U_2\wedge U_1$.
Each element of $C^k(\mathcal{U}, \F\otimes \Om_X^p)$ thus has a unique representation of the form
$\sum_{i_0 < \cdots < i_k} \varphi_{i_0\cdots i_k} U_{i_0}\wedge \cdots \wedge U_{i_k}$
that we will abbreviate as $\sum'_{|I|=k+1} \varphi_IU_I$.
We define a coboundary operator $\delta\colon C^k(\mathcal{U}, \F\otimes \Om_X^p)\to C^{k+1}(\mathcal{U}, \F\otimes \Om_X^p)$
by
\begin{equation*}
\delta \sum'_{|I|=k+1} \varphi_IU_I := \sum'_{|I|=k+1} \varphi_IU_I \wedge \sum_j U_j =
\sum'_{|I|=k+1}\sum_j \varphi_I \restriction_{U_I\cap U_j} U_I\wedge U_j,
\end{equation*}
which is continuous, and we get the following complex of Fr\'{e}chet spaces
\begin{equation}\label{checkcplx}
0\to C^0(\mathcal{U},\F\otimes\Om_X^p) \stackrel{\delta}{\longrightarrow} 
C^1(\mathcal{U},\F\otimes\Om_X^p) \stackrel{\delta}{\longrightarrow} \cdots.
\end{equation}
The $q^{\textrm{th}}$ cohomology group of this complex is isomorphic to 
$H^q(X,\F\otimes\Om_X^p)$ and in fact, the standard topology on $H^q(X,\F\otimes\Om_X^p)$
is defined so that the isomorphism also is a homeomorphism.

\smallskip

Let $B^{n-p}$ be the precosheaf (see, e.g., \cite[Section~3]{AK}) defined by assigning
to each open $U\subset X$ the space 
$B^{n-p}(U):=H^n\big(\Bsheaf_c^{n-p,\bullet}(U,F^*),\debar\big)$
and for $U'\subset U$ the inclusion map $i^{U'}_U \colon B^{n-p}(U')\to B^{n-p}(U)$ given by extension by $0$.
We let, for $k\geq 0$, $C_c^{-k}(\mathcal{U},B^{n-p})$ be the group of formal sums
\begin{equation*}
\sum_{i_0 \cdots i_k} [\psi_{i_0\cdots i_k}]_{\debar} U^*_{i_0}\wedge\cdots\wedge U^*_{i_k}, \quad
\psi_{i_0\cdots i_k}\in \Bsheaf_c^{n-p,n}(U_{i_0}\cap\cdots\cap U_{i_k},F^*),
\end{equation*}
with the suggestive computation properties and only finitely many $[\psi_{i_0\cdots i_k}]_{\debar}$ non-zero.
We define the coboundary operator
$\delta^*\colon C_c^{-k}(\mathcal{U},B^{n-p})\to C_c^{-k+1}(\mathcal{U},B^{n-p})$
by
\begin{equation*}
\delta^* \sum'_{|I|=k+1} [\psi_I] U^*_I := \sum_j U_j\lrcorner \sum'_{|I|=k+1} [\psi_I] U^*_I
=\sum'_{|I|=k+1}\sum_j i^{U_I}_{U_{I\setminus \{j\}}} [\psi_I]U_j\lrcorner U^*_I,
\end{equation*}
where $\lrcorner$ is formal interior multiplication. We get the complex
\begin{equation}\label{cocheckcplx}
0 \leftarrow C_c^{0}(\mathcal{U},B^{n-p}) \stackrel{\delta^*}{\longleftarrow} 
C_c^{-1}(\mathcal{U},B^{n-p}) \stackrel{\delta^*}{\longleftarrow} \cdots.
\end{equation}
By Theorem~\ref{localdual}, $C_c^{-k}(\mathcal{U},B^{n-p})$ is the topological dual of 
$C^k(\mathcal{U},\F\otimes\Om_X^p)$ via the pairing
$C^k(\mathcal{U},\F\otimes\Om_X^p) \times C_c^{-k}(\mathcal{U},B^{n-p}) \to  \C$ given by
\begin{equation}\label{vaccin2}
(\varphi,[\psi]_{\debar})=\big(\sum'_{|I|=k+1}\varphi_I U_I, \sum'_{|I|=k+1}[\psi_I]_{\debar} U^*_I\big)  \mapsto 
\int_X \varphi\lrcorner \psi = \sum'_{|I|=k+1} \int_X \varphi_I\wedge\psi_I.
\end{equation}
Moreover, if $\varphi \in C^{k-1}(\mathcal{U},\F\otimes\Om_X^p)$ and $[\psi]\in C_c^{-k}(\mathcal{U},B^{n-p})$, then
\begin{equation*}
\int_X  \varphi\lrcorner \delta^* \psi = \int_X \varphi \lrcorner (\sum_jU_j\lrcorner \psi)
=\int_X (\varphi\wedge\sum_j U_j)\lrcorner \psi = \int_X \delta \varphi \lrcorner \psi
\end{equation*}
and so \eqref{cocheckcplx} is the dual complex of \eqref{checkcplx}. It follows, see, e.g., \cite[Lemme~2]{RaRu},
that
\begin{equation}\label{lampa1}
\textrm{Ker}\big(\delta^* \colon C^{-q}_c(\mathcal{U},B^{n-p})\to C^{-q+1}_c(\mathcal{U},B^{n-p})\big)/
\overline{\delta^* C^{-q-1}_c(\mathcal{U},B^{n-p})}
\end{equation}
is the topological dual of 
\begin{equation}\label{lampa2}
\textrm{Ker}\big(\delta \colon C^{q}(\mathcal{U},\F\otimes\Om_X^p)\to C^{q+1}(\mathcal{U},\F\otimes\Om_X^p)\big)/
\overline{\delta C^{q-1}(\mathcal{U},\F\otimes\Om_X^p)}.
\end{equation}
Now, if $H^q(X,\F\otimes\Om_X^p)$ and $H^{q+1}(X,\F\otimes\Om_X^p)$ are Hausdorff, then the closure signs in
\eqref{lampa1} and \eqref{lampa2} are superfluous and so $H^{-q}(C^{\bullet}_c(\mathcal{U},B^{n-p}),\delta^*)$ is the 
topological dual of $H^q(X,\F\otimes\Om_X^p)$ in this case, via the pairing induced by \eqref{vaccin2}. 

To understand $H^{-q}(C^{\bullet}_c(\mathcal{U},B^{n-p}), \delta^*)$, consider the double complex 
\begin{equation*}
K^{-i,j}:=C_c^{-i}(\mathcal{U},\Bsheaf_c^{n-p,j}),
\end{equation*} 
where $\Bsheaf_c^{n-p,j}$ is the precosheaf
$U\mapsto \Bsheaf_c^{n-p,j}(U)$ with inclusion maps given by extending by $0$, the map 
$K^{-i,j}\to K^{-i+1,j}$ is $\delta^*$, and the map $K^{-i,j}\to K^{-i,j+1}$ is $\debar$.
For each $i\geq 0$ the ``row'' $K^{-i,\bullet}$ is, by Theorem~\ref{localdual}, exact except for on 
the level $n$ where the cohomology is $C^{-i}_c(\mathcal{U},B^{n-p})$. 
Since the $\Bsheaf_X$-sheaves are fine it follows from, e.g., \cite[Lemma~6.2]{RSWSerre} that,
for each $j\geq 0$, the ``column'' $K^{\bullet,j}$ is exact except for on the level $0$ where the
cohomology is $\Bsheaf_c^{n-p,j}(X,F^*)$. From, e.g., a spectral sequence argument it thus follows that 
\begin{equation}\label{vaccin}
H^{-q}\big(C^{\bullet}_c(\mathcal{U},B^{n-p}),\delta^*\big) \simeq
H^{n-q}\big(\Bsheaf_c^{n-p,\bullet}(X,F^*), \debar\big).
\end{equation}
Hence, we have a non-degenerate pairing \eqref{par1} but we have not proved that it is given by \eqref{par2}. 
To do this one makes the isomorphisms $H^q(\A^{p,\bullet}(X,F),\debar)\simeq H^q(X,\F\otimes\Om_X^p)$ and
\eqref{vaccin} explicit; see the proof of \cite[Theorem~1.3]{RSWSerre} for details.

\end{document}